\newtheorem{theorem}{Theorem}
\newtheorem{definition}{Definition}
\newtheorem{proposition}{Proposition}
\newtheorem{lemma}{Lemma}
\newtheorem{corollary}{Corollary}
\newtheorem{exam}{Example}
\newenvironment{example}{\begin{exam}\rm}{\end{exam}}
\newtheorem{exams}{Examples}
\newtheorem{rmk}{Remark}
\newenvironment{remark}{\begin{rmk}\rm}{\end{rmk}}
\newtheorem{notat}{Notation}
\renewcommand{\ne}{\not =}
\title[Invariant Hypersurfaces and Nodal Components of Foliations]{Invariant Hypersurfaces and Nodal Components for Codimension One Singular Foliations}
\author{F. Cano}
\author{J.F. Mattei}
\author{M. Ravara-Vago}
\date{\today}
\begin{document}
\begin{abstract}{It is known that there is at least an invariant analytic curve passing through each of the components in the complement of nodal singularities, after the reduction of singularities of a germ of singular foliation in ${\mathbb C}^2,0$}. Here, we state and prove a generalization of this property to any ambient dimension.
\end{abstract}
\maketitle
\tableofcontents
\section{Introduction}

This paper deals with the presence of invariant hypersurfaces in each component of the ``space of leaves'' of germs holomorphic codimension one foliations. More precisely, working in any ambient dimension,  we give a result that generalizes the two dimensional refined version of Camacho-Sad's theorem \cite{Cam-S}, stated an proved by Ortiz, Rosales and Voronin \cite{Ort-R-V}. The main result in this paper is the following one:

\begin{theorem}
\label{teo:main}
Consider a nodal reduction of singularities
$$
\pi:
(M, E,{\mathcal F})\rightarrow (({\mathbb C}^n,0),\emptyset,{\mathcal F}_0)
$$
of a
GH-foliation ${\mathcal F}_0$ on $({\mathbb C}^n,0)$. Let $\vert\mathcal S\vert $ be the support of the nodal separator set of $(M, E,{\mathcal F})$. For any connected component $C$ of $E\setminus \vert{\mathcal S}\vert $, there is an invariant hypersurface $H_0$ of ${\mathcal F}_0$ such that $H\cap C\ne\emptyset$, where $H\subset M$ is the strict transform of $H_0$ by $\pi$. Moreover, we have that $H\cap E\subset C$.
\end{theorem}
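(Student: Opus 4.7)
The natural strategy is to reduce to the two-dimensional result of Ortiz, Rosales and Voronin by a generic slicing argument, and then to lift the resulting invariant curve to a full invariant hypersurface. The GH-foliation hypothesis and the non-nodal character of the reduced singularities inside $C$ should provide the rigidity needed for the lifting step.

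First, I would pick a smooth point $p\in C$ lying on a single irreducible component of $E$ (in particular off every corner of $E$), and a germ of $2$-dimensional analytic section $\Sigma$ through $p$ transverse to $E$. A generic choice of $\Sigma$ should guarantee that $(\Sigma, E\cap\Sigma, {\mathcal F}|_\Sigma)$ is itself a nodal reduction of a germ of foliation on $({\mathbb C}^2,0)$ whose nodal separator is $|{\mathcal S}|\cap\Sigma$; the germ of the connected component of $(E\cap\Sigma)\setminus(|{\mathcal S}|\cap\Sigma)$ through $p$ is then identified with the trace of $C$ on $\Sigma$. Applying \cite{Ort-R-V} to this 2-dimensional picture yields an ${\mathcal F}|_\Sigma$-invariant curve $\gamma\subset\Sigma$ whose strict transform meets $C\cap\Sigma$.

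The main obstacle, and the technical core of the proof, is the globalization: extending $\gamma$ to a closed ${\mathcal F}$-invariant hypersurface $H\subset M$, or equivalently constructing its image $H_0=\pi(H)$ as an invariant hypersurface of ${\mathcal F}_0$ in $({\mathbb C}^n,0)$. The natural approach is to saturate $\gamma$ under the flow of ${\mathcal F}$; the delicate point is to verify that this saturation closes into an analytic subset instead of accumulating densely. The absence of nodal singularities inside $C$ is precisely what allows this: every reduced singularity of ${\mathcal F}$ met by the saturation in $C$ is of non-nodal type, so its formal separatrices can be analyzed, and under the GH hypothesis these formal separatrices should converge and paste together into a single analytic hypersurface $H$ containing $\gamma$. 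Making this gluing precise requires a careful holonomy argument along loops in $C$, so that local convergent separatrices around different reduced singularities agree on their common overlaps and do not develop monodromy.

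Finally, to obtain $H\cap E\subset C$, I would use that any reduced nodal singularity $q\in|{\mathcal S}|$ acts as a barrier: the only local invariant hypersurfaces at such a singularity are contained in $E$, so an invariant hypersurface of ${\mathcal F}$ whose trace crosses from one side of $q$ to the other would have to coincide with a component of $E$. Since $H$ is a strict transform under $\pi$ it is not contained in $E$, and consequently the trace $H\cap E$ stays within the single connected component of $E\setminus|{\mathcal S}|$ through which $\gamma$ entered, namely $C$.
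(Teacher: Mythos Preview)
Your overall intuition---reduce to dimension two and invoke \cite{Ort-R-V}---is correct, but two of the three steps have genuine gaps.

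\textbf{The transversal section.} Choosing a $2$-dimensional section $\Sigma$ through a generic smooth point $p$ of a single component of $E$ does \emph{not} produce a nodal reduction of singularities of a foliation on $({\mathbb C}^2,0)$. For $\pi\vert_\Sigma$ to be a sequence of point blow-ups, the section must be transversal to every blow-up center at every stage, and there is no reason this holds for an arbitrary section chosen upstairs. The paper handles this by an induction on the length of $\pi$: writing $\pi=\pi_1\circ\sigma$ with $\pi_1$ the first blow-up with center $Y_0$, one chooses the $2$-plane $\Delta_0$ \emph{downstairs}, transversal to $Y_0$ at a generic point (a so-called two-equireduction point), and then verifies via Propositions~\ref{pro:codimensionnonequireduction} and~\ref{pro:transveresalsection} that the strict transform $\Delta$ of $\Delta_0$ really inherits a two-dimensional reduction of singularities. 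One also needs to check that the connected component $C_\Delta$ of the $2$-dimensional picture sits inside $C$; this uses that the strict transform $E_\infty$ of the first exceptional divisor belongs to $C$ (the case where it does not is disposed of by the induction hypothesis).

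\textbf{The globalization.} Saturating the invariant curve $\gamma$ by the flow of $\mathcal F$ and controlling the closure via holonomy is both vague and unnecessary. In a desingularized GH-foliated space there is a bijection between closed irreducible invariant hypersurfaces $H\not\subset E$ and \emph{partial separatrices} (connected components of the union of trace-type components of $\operatorname{Sing}(\mathcal F)$), with $H\cap E$ equal to the support of the corresponding partial separatrix. The two-dimensional argument produces a trace-type singular point $s\in C$; the partial separatrix $\Sigma$ through $s$ then gives the hypersurface $H=H_\Sigma$ directly, with no saturation or monodromy analysis. Moreover, since $({\mathcal M},{\mathcal F})$ is nodally reduced, every point of a partial separatrix is of trace type while every point of $\vert{\mathcal S}\vert$ is of corner type; hence $H\cap E=\vert\Sigma\vert\subset C$ is automatic, and your separate barrier argument is not needed.
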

Concerning the existence of invariant hypersufaces, let us recall that it was a Thom's question if any  singular holomorphic foliation on $({\mathbb C}^2,0)$ has at least one invariant branch. A positive answer has been obtained by Camacho and Sad in \cite{Cam-S}. In higher ambient dimension, Jouanolou gave in \cite{Jou} examples of codimenson one dicritical holomorphic foliations without invariant hypersurface.  For any ambient dimension and non dicritical foliations, the existence of invariant hypersurface is proved in
\cite{Can-C, Can-M}.

 The space of leaves of foliations on $({\mathbb C}^2,0)$ is naturally separated by the so-called {``nodal points''}. This property has been remarked in \cite{Mat-M}.
After a ``nodal'' reduction of singularities, we can put all the nodal points  as ``corners'' of the exceptional divisor; when we remove them, the exceptional divisor is decomposed into several connected components: we get exactly ``$s+1$'' pieces if we have ``$s$'' nodal points.  By a result of Ortiz-Rosales-Voronin \cite{Ort-R-V}, each of these pieces intersects the strict transform of at least one invariant branch of the foliation. This statement can be considered as a ``refined version'' of Camacho-Sad's Theorem in \cite{Cam-S}. Let us remark that there are other two-dimensional versions \cite{Cam-R, Mat-M2}, where saddle nodes and dicritical components are also considered.

This paper provides an extension of the result in \cite{Ort-R-V} to any ambient dimension, for non-dicritical germs of codimension one singular foliations without saddle nodes. We refer to these conditions by saying that we have a ``non dicritical complex hyperbolic'' foliation, or a ``generalized hypersurface'', for short: a GH-foliation.

 We recall that the origin is a {\em nodal point} for  a foliation $\mathcal L$ on $({\mathbb C}^2,0)$ if, and only if, it is given in appropriate coordinates by $\omega=0$, where
$$
\omega=xdy-\lambda ydx
$$
and $\lambda$ is a positive irrational real number, see \cite{Mat-M}. In higher dimension, the objects that correspond to the nodal points, from the view point of the separating properties in the space of leaves, are  called {\em nodal separating blocks}.
Let us give a quick description of what they are.

Consider a foliated space $(M,E,{\mathcal F})$, given by a nonsingular complex analytic space $M$, a normal crossings divisor $E\subset M$ and a codimension one foliation $\mathcal F$ on $M$. Assume that it is a desingularized  $GH$-foliated space. This means that the irreducible components of $E$ are invariant, there are no saddle nodes and
all the points of $M$ are simple for $({\mathcal F}, E)$ in the sense of \cite{Can, Can-C}.
Any irreducible component $\Gamma$ of the singular locus $\mbox{\rm Sing}({\mathcal F})$ is a codimension two non singular subspace $\Gamma\subset M$.
We say that $\Gamma$ is of {\em nodal type}, respectively {\em real saddle type}, if the generic transversal type of $\mathcal F$ with respect to $\Gamma$ is a two dimensional nodal foliation, respectively a two dimensional real saddle  foliation (negative real quotient of eigenvalues).
Let ${\mathcal N}$ be the union of the nodal type irreducible components $\Gamma$ of $\mbox{\rm Sing}({\mathcal F})$.
A connected component $\mathcal B$ of ${\mathcal N}$ is a {\em nodal separating block} when it only intersects nodal and real saddle type irreducible components of the singular locus ( in
\cite{Can-RV,Can-RV-S} we call these sets ``uninterrupted nodal components''). The {\em nodal separator set } $\mathcal S$ is the union of all nodal separating blocks.

Nodal separating blocks separate locally the space of leaves exactly in the same way as a nodal type point in dimension two. Nevertheless, it should be possible that a contracting  ``secondary'' or ``singular'' holonomy  allows the passage of leaves through a given separating block.  This is a behavior of global nature. Although we can produce examples ``ad hoc'' of this kind of singular holonomy, we have no examples in the case of spaces obtained by reduction of singularities of a foliation on $({\mathbb C}^n,0)$.

We say that a desingularized foliated space is {\em nodally desingularized} when the nodal separating blocks are of {\em corner type}. This means that each irreducible component $\Gamma$ of the separating blocks is contained in two irreducible components of the divisor.
A {\em nodal reduction of singularities of a foliation ${\mathcal F}_0$ on $({\mathbb C}^n,0)$} is a morphism
$$
\pi:(M,E, {\mathcal F})\rightarrow (({\mathbb C}^n,0),\emptyset,{\mathcal F}_0)
$$
that is the composition of a finite sequence of ``admissible'' blow-ups and such that $(M,E,{\mathcal F})$ is nodally desingularized. In Subsection
\ref{Nodally Desingularized Foliated Spaces},
we show the existence of nodal reduction of singularities for GH-foliations on $({\mathbb C}^n,0)$, when we dispose of a reduction of singularities of the set of invariant hypersurfaces.

It is now natural to ask how many connected components of $E\setminus {\mathcal S}$ we find, when we consider a non-dicritical CH-foliated space that is nodally desingularized. This gives an accurate sense to Theorem \ref{teo:main}.
The topological combinatorics of the exceptional divisor allows us to prove that each nodal separating block divides the exceptional divisor into two pieces, exactly as in the case of dimension two. More precisely, we have the following result:
\begin{theorem}
\label{teo:numerodecomponents} Let
$
(M,E,{\mathcal F})\rightarrow (({\mathbb C}^n,0),\emptyset,{\mathcal F}_0)
$
be a
nodal reduction of singularities
of a GH-foliation ${\mathcal F}_0$ of $({\mathbb C}^n,0)$.  Let $\mathcal S$ be the separator set  of $(M,E,\widetilde{\mathcal F})$. If $\mathcal S$
has ``$s$'' nodal separating blocks, then $E\setminus {\mathcal S}$ has exactly ``$s+1$'' connected components.
\end{theorem}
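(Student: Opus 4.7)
The plan is to argue by induction on the number $s$ of nodal separating blocks, reducing the statement to a key \emph{single-block separation lemma}: for any individual nodal separating block $\mathcal{B}$, the set $E\setminus\mathcal{B}$ has exactly two connected components. Once this is granted, the inductive step is bookkeeping. When $s=0$ one uses that $E$, being the exceptional divisor of a composition of blow-ups over the origin, is connected; when $s\geq 1$, picking a block $\mathcal{B}$ and writing $E\setminus\mathcal{B}=A\sqcup B$, each of the remaining $s-1$ blocks (closed, connected, and disjoint from $\mathcal{B}$) lies entirely in $A$ or in $B$, and the inductive hypothesis applied to the two sides yields $(s_A+1)+(s_B+1)=s+1$ components in total.

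To attack the single-block lemma I would pass to the dual graph $\mathcal G$ whose vertices are the irreducible components $D_1,\ldots,D_r$ of $E$ and whose (multi-)edges are the irreducible components of the pairwise intersections $D_i\cap D_j$. Since each $D_i\setminus\mathcal S$ is connected (we remove from a smooth connected complex variety a subset of complex codimension at least one), the connected components of $E\setminus\mathcal S$ are in bijection with the connected components of $\mathcal G$ after deletion of the edges that come from $\mathcal S$. In this dictionary, ``$E$ is connected'' becomes ``$\mathcal G$ is connected'', and the target claim becomes that the edges coming from a single nodal block form a cut of $\mathcal G$ separating it into exactly two pieces.

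The local part of this combinatorial claim exploits the corner-type hypothesis: each irreducible component $\Gamma$ of $\mathcal B$ is contained in exactly two divisor components, and the nodal transverse type across $\Gamma$ canonically distinguishes two local sides; the defining property of a separating block---meeting only nodal and real-saddle components of $\mathrm{Sing}(\mathcal F)$---is precisely what allows these local labels to propagate consistently along the connected subgraph of $\mathcal G$ carved out by $\mathcal B$, producing two well-defined ``sides'' of the divisor components touching $\mathcal B$. The global part is the claim that no path in $\mathcal G$ bypassing the edges of $\mathcal B$ links these two sides; equivalently, that no cycle of $\mathcal G$ goes around $\mathcal B$.

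The main obstacle is this global step. My plan for it is to exploit the topology of the exceptional divisor of an iterated admissible blow-up of $(\mathbb{C}^n,0)$: the dual complex $\Delta(E)$ is contractible, so every cycle in its $1$-skeleton $\mathcal G$ is filled in by higher-dimensional simplices coming from multiple intersections. A cycle bypassing $\mathcal B$ would have to be killed through a simplex touching $\mathcal B$, but the corner-type hypothesis prevents the irreducible components of $\mathcal B$ from lying inside the triple- (or higher-)intersection strata that would realize such a filling. The cleanest way to make this precise will likely be an induction on the admissible blow-up sequence producing $(M,E,\mathcal F)$, checking at each step that the cut property of nodal separating blocks is preserved; it is this induction, combined with the combinatorial dictionary above, that I expect to be the most technically delicate part of the proof.
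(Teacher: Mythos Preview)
Your overall architecture matches the paper's: pass to the dual complex of $E$, prove that this complex is (combinatorially) simply connected by induction on the blow-up sequence, and then use a parity/side argument to show that each individual nodal separating block cuts the complex into exactly two pieces. The paper does precisely this: it introduces an abstract ``combinatorial strata structure'' $\mathcal H_{\mathcal M}$, proves its simple connectedness blow-up by blow-up via Seifert--van Kampen, and for a single block $\mathcal B$ defines a crossing number $\kappa_{\mathcal B}(\gamma)$, shows it has well-defined parity on homotopy classes, and obtains the two sides $A_{\mathcal B}(i)$, $B_{\mathcal B}(i)$. So your single-block lemma and your plan for it are exactly right, and your identification of ``no cycle of $\mathcal G$ goes around $\mathcal B$'' as the global input is on target.

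The genuine gap is in the step you call ``bookkeeping''. You write: ``the inductive hypothesis applied to the two sides yields $(s_A+1)+(s_B+1)=s+1$''. But the theorem's hypothesis is that $E$ is the exceptional divisor of a nodal reduction of singularities over $(\mathbb C^n,0)$; the pieces $A$ and $B$ are not of this form, so you cannot invoke the inductive hypothesis on them. If you try to repair this by reformulating the induction over simply connected dual complexes, you then need that $A$ and $B$ are themselves simply connected, which is not automatic and which you have not argued. The paper sidesteps this entirely: rather than inducting on $s$, it keeps the whole simply connected $\mathcal H$ fixed and studies the map $\Phi_m:\mathcal H(1)\to\{0,1\}^m$ recording the $\mathcal B_\ell$-parity for $\ell=1,\dots,m$. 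Two short lemmas (that all vertices touched by $\mathcal B_{m+1}$ lie in a single $\Phi_m$-fiber, and that every other $\Phi_m$-fiber is contained in one $\mathcal B_{m+1}$-side) show that $\#\operatorname{Im}\Phi_{m+1}=\#\operatorname{Im}\Phi_m+1$, and that the $\Phi_m$-fibers are exactly the components of $E\setminus(\mathcal B_1\cup\cdots\cup\mathcal B_m)$. This is the correct formalization of your intuition that ``each new block splits exactly one existing piece'', but it is not mere bookkeeping: it uses in an essential way that the parity functions $\kappa_{\mathcal B_\ell}$ are all defined on paths in the \emph{original} simply connected $\mathcal H$, not in some subpiece.
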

The proof of Theorem \ref{teo:numerodecomponents} is given by arguments of combinatorial topology, by showing that we are in a situation very similar to the classical Jordan's Curve Theorem. To do this, we work in a systematic way with the stratification of $M$ induced by the exceptional divisor $E$.

The proof of Theorem \ref{teo:main} goes by an inductive argument on the length of a reduction of singularities and also by considering the two dimensional case based in an adecuate notion of $2$-equireduction.

{\em Acknowledgements:} We thank Etienne Fieux for many helpful discussions and the algebraic topology clarifications he has brought to us, they have been very valuable for the proof of Theorem \ref{teo:numerodecomponents}.

\section{Preliminaries}
\label{Preliminaries}
 We recall that a codimension one singular foliation $\mathcal F$ of $({\mathbb C}^n,0)$ is generated by a germ of nonzero differential $1$-form
$$
\omega=a_1dx_1+a_2dx_2+\cdots +a_ndx_n;\quad a_i\in {\mathcal O}_{{\mathbb C}^n,0},\; i=1,2,\ldots,n,
$$
satisfying Frobenius' integrability condition $\omega\wedge d\omega=0$ and such that the coefficients $a_i$ have no common divisor. We also denote $\mathcal F$ by $\omega=0$. The singular locus $\mbox{\rm Sing}\mathcal F$ is the subspace of $({\mathbb C}^n,0)$ defined by $$a_1=a_2=\cdots=a_n=0.$$
It is an analytic germ in $({\mathbb C}^n,0)$ of codimension greater or equal than two.

 An analytic morphism $\phi:({\mathbb C}^m,0)\rightarrow ({\mathbb C}^n,0)$ is {\em invariant} by $\mathcal F$ when $\phi^*\omega=0$. We say that an analytic germ $(Z,0)\subset ({\mathbb C}^n,0)$ is {\em invariant} by $\mathcal F$ if and only if any analytic morphism factorizing through $(Z,0)$ is invariant. In particular, we see that a germ of hypersurface
$(H,0)\subset ({\mathbb C}^n,0)$, given by the reduced equation $f=0$, is {\em invariant by $\mathcal F$} if and only if  $f$ divides the differential $3$-form $\omega\wedge df$.

For any non invariant morphism $\phi:({\mathbb C}^m,0)\rightarrow ({\mathbb C}^n,0)$, the
{\em pull-back $\phi^*{\mathcal F}$} is a well defined codimension one singular foliation of $({\mathbb C}^m,0)$. We obtain a generator of $\phi^*{\mathcal F}$ after dividing $\phi^*\omega$ by the maximum common divisor of its coefficients.

Let us recall the definition of {\em dicritical singular foliation}, see \cite{Can-C}:

\begin{definition} A codimension one singular foliation $\mathcal F$ on $({\mathbb C}^n,0)$ is {\em dicritical} if, and only if, there is a non invariant analytic morphism
$
\phi:({\mathbb C}^2,0)\rightarrow ({\mathbb C}^n,0)
$ and suitable coordinates $x,y$ in $({\mathbb C}^2,0)$,
such that:
 \begin{enumerate}
 \item $\phi^*{\mathcal F}$ is the foliation given by $dy=0$.
 \item The morphism $(\phi\circ\gamma):({\mathbb C},0)\rightarrow ({\mathbb C}^n,0)$ is invariant, where
$\gamma(t)=(0,t)$. {\em (Let us note that $\gamma$ is not invariant for $\phi^*{\mathcal F}$).}
 \end{enumerate}
\end{definition}

In this paper, we are interested in foliations ``without saddle nodes''. Let us precise this. We recall that a {\em saddle node} is a singular foliation of $({\mathbb C}^2,0)$, defined by a differential $1$-form $\omega$, such that
$$
\omega=xdy+(\mbox{ terms of order }\geq 2).
$$
In dimension two, the singular foliations without saddle nodes in their reduction of singularities are usually called {\em generalized curves}. They can be dicritical or not. In the nondicritical case, these foliations have the same reduction of singularities as the set of invariant curves, as shown in \cite{Cam-L-S}. In higher dimension, we consider the following definition
\begin{definition} Let $\mathcal F$ be a codimension one singular foliation on $({\mathbb C}^n,0)$. We say that $\mathcal F$ is {\em complex hyperbolic}, for short a {\em CH-foliation}, if, and only if, there is no analytic morphism $\phi:({\mathbb C}^2,0)\rightarrow ({\mathbb C}^n,0)$ such that $\phi^*{\mathcal F}$ is a saddle node.
\end{definition}
\begin{remark}
\label{rk:ch}
Any reduction of singularities of the set of invariant hypersurfaces of a nondicritical CH-foliation provides a reduction of singularities of the foliation, see for instance \cite{Fer-M}. In this sense, they can be called {\em generalized hypersurfaces}. This is no longer true in the dicritical case, since there are examples without  invariant hupersurfaces \cite{Jou}.
\end{remark}

\section{Standard Ambient Spaces}
We consider ambient spaces obtained from $({\mathbb C}^n,0)$ by finite sequences of blow-ups, with centers having normal crossings properties. Let us precise the definitions.

Consider a finite family ${\mathcal Y}=\{Y_j\}_{j\in J}$ of closed analytic subsets $Y_j$ of a given nonsingular complex analytic space $M$. We say that ${\mathcal Y}$ has {\em local normal crossings at $p\in M$} if the following property holds:
\begin{quote}
There is a local coordinate system $\boldsymbol{z}=(z_1,z_2,\ldots,z_n)$ of $M$ at $p$, satisfying that each $Y_j$ is given by a local equation $
(z_i=0;\; i\in A_j)
$, where $A_j\subset\{1,2,\ldots, n\}$, for any $j\in J$.
\end{quote}
 The family ${\mathcal Y}$ has {\em locally normal crossings in $M$} if it has local normal crossings at any $p\in M$. Let us note that in this case the closed analytic sets
 $Y_K=\cap _{j\in K}Y_j$ are non singular. We say that ${\mathcal Y}$ has {\em normal crossings in $M$} if, in addition, each $Y_K$ is connected for any $K\subset J$.

A {\em standard ambient space} ${\mathcal M}=(M,E;K)$ is the data of a nonsingular germ of complex analytic space $(M,K)$ and divisor $(E,E\cap K)\subset (M,K)$, such that $(M,K)$ is isomorphic to $({\mathbb C}^n,0)$ when $E=\emptyset$ and if $E\ne\emptyset$, we have:
\begin{enumerate}
\item The divisor $E$ is a union $E=\cup_{i\in I}E_i$ of non singular connected hypersurfaces $E_i$. The {\em soul $K$} is a connected union $K=\cup_{i\in I}K_i$ of connected and compact non singular analytic spaces $K_i$, with $K_i=E_i\cap K$.
\item The family $\{E_i, K_j\}_{i,j\in I}$ has normal crossings in $M$.
\end{enumerate}
In particular, the components $E_i$ of $E$ are of two types. If $K_i=E_i$ and hence $E_i\subset K$, we have a  compact component $E_i$. When $K_i\ne E_i$, we have a non compact germ $(E_i,K_i)$ over the compact analytic subset $K_i$.

Consider a {standard ambient space} ${\mathcal M}=(M,E;K)$. We recall that a {\em closed immersion} of germs $(Y,T)\subset (M,K)$ is obtained by a closed immersion between suitable ``small enough'' representatives $Y\subset M$ in such a way that $T=Y\cap K$.
We say that $(Y,T)$ has {\em normal crossings with ${\mathcal M}$} if the family
$$
\left\{
Y, T\}\cup\{E_i, K_j
\right\}_{i,j\in I}
$$
has  normal crossings in $M$.
In this situation, the blow-up
$
(M',K')\rightarrow (M,K)
$
of $(M,K)$ with center $(Y,T)$, induces a transformation of standard ambient spaces
$$
\pi: (M',E';K')\rightarrow  (M,E;K);\quad E'=\pi^{-1}(E\cup Y),\; K'= \pi^{-1}(K).
$$
Such transformations are called {\em standard blow-ups} and the center $(Y,T)$ is called a {\em standard center for $\mathcal M$}. Any composition of a finite sequence of standard blow-ups is called {\em a standard transformation}.

Consider a standard ambient space ${\mathcal M}=(M,E;K)$ and let ${\mathcal L}=\{H_1,H_2,\ldots,H_k\}$ be a finite list of closed irreducible hypersurfaces $(H_i,H_i\cap K)\subset (M,K)$ and let us put $H=H_1\cup H_2\cup \cdots \cup H_k$.
We say that the pair $({\mathcal M},{\mathcal L})$ is {\em simple} if the following properties hold:
\begin{enumerate}
\item $H_i \cap H_j= \emptyset$, for $i\ne j$.
\item $H_i\not\subset E$, for $i=1,2,\ldots,k$.
\item The space ${\mathcal M}_{\mathcal L}=(M,E\cup H;K)$ is a standard ambient space.
If we only ask $E\cup H$ to have local normal crossings, we say that $({\mathcal M},{\mathcal L})$ is {\em locally simple}.
\end{enumerate}
A standard center $(Y,T)$ for $\mathcal M$ is {\em admissible} for the pair $({\mathcal M},{\mathcal L})$ if $Y$ is contained in the singular locus of $E\cup H$. As before, an {\em admissible blow-up for $({\mathcal M},{\mathcal L})$}
 is a blow-up with an admissible center
 $
 ({\mathcal M}',{\mathcal L}')\rightarrow ({\mathcal M},{\mathcal L})
 $,
  where ${\mathcal L}'$ is the list of the strict transforms of the elements in $\mathcal L$. An admissible transformation is the composition of a finite sequence of admissible blow-ups.

 \begin{definition}
   Let $({\mathcal M}_0,{\mathcal L}_0)$ be a pair where $\mathcal M=(M_0,E^0)$ is an ambient space and ${\mathcal L}_0$ is a finite list of irreducible hypersurfaces of
   ${\mathcal M}_0$ not contained in $E^0$. An {\em admissible reduction of singularities} of $({\mathcal M}_0,{\mathcal L}_0)$ is any admissible transformation $({\mathcal M},\mathcal L)\rightarrow ({\mathcal M}_0,{\mathcal L}_0)$ such that $({\mathcal M},\mathcal L)$ is simple.
 \end{definition}
 \begin{remark}
 \label{rk:reductionoflists}
 The existence of an admissible reduction of singularities for a given pair $({\mathcal M}_0,{\mathcal L}_0)$ is a problem hugely close to the classical Hironaka's reduction of singularities in \cite{Hir} and \cite{Aro-H-V}. In dimension three, for $M_0=({\mathbb C}^3,0)$, it is possible to show directly the existence of an admissible reduction of singularities, we give an outline of a proof in Appendix
 \ref{Appendix: Strong Desingularization in Dimension Three}.
 In dimension $n\geq 4$, there are no explicit statements in the literature and a proof is maybe possible by introducing the global condition of connectedness in the known procedures. Note also that in view of Remark \ref{rk:tresdimensionalcase}, most of the technical difficulties are concentrated in dimension three. Anyway, along this paper we work under the assumption that the following statement is true:
 \begin{quote}\em
 Consider an ambient space ${\mathcal M}_0$ and a finite list
 ${\mathcal L}_0$ of irreducible hypersurfaces of ${\mathcal M}_0$ not contained in $E^0$. There is an admissible reduction of singularities of $({\mathcal M}_0,{\mathcal L}_0)$.
 \end{quote}
 \end{remark}

\section{Desingularized  GH-Foliated Spaces}
Let us recall from the Introduction that a {\em foliated space} is a pair $({\mathcal M},\mathcal{F})$ where ${\mathcal M}=(M,E;K)$ is a standard ambient space and $\mathcal F$ is a codimension one singular holomorphic foliation on $(M,K)$.
In this paper, we consider only {\em non-dicritical CH-foliated spaces}. This means that ${\mathcal F}$ is a non-dicritical CH-foliation and moreover each irreducible component of $E$ is invariant for $\mathcal F$.  From now on, we refer to the non-dicritical CH-foliated spaces as {\em generalized hypersurface type foliated spaces}, for short: {\em  GH-foliated spaces}.

It is important to note that the class of GH-foliated spaces is stable under standard blow-ups with invariant center.

\subsection{Simple Points}
We particularize here the definition of simple point in \cite{Can-C,Can} to the case of a GH-foliated space $({\mathcal M},{\mathcal F})$.

Let us consider a point $p\in K\subset M$. The {\em dimensional type $\tau_p{\mathcal F}$} is the dimension of the ${\mathbb C}$-vector space $T_p{\mathcal F}$ given by the vectors $\xi(p)\in T_p{\mathbb C}^n$, where $\xi$ is a germ of vector field tangent to $\mathcal F$.
Denote $\tau=\tau_p{\mathcal F}$ and let $e=e_p(E)$ be the number of irreducible components of $E$ through $p$. There is a local coordinate system $(x_i)_{i=1}^n$  and an integrable germ of $1$-form $\omega$ defining $\mathcal F$, such that $E=(\prod_{i=1}^ex_i=0)$ and
$$
\omega=\sum_{i=1}^\tau a_idx_i,\quad \partial a_i/\partial x_j=0,\, j=\tau+1,\tau+2,\ldots,n,
$$
where the coefficients $a_i$ are without common factor. Since the irreducible components of $E$ are invariant, we can write $\omega$ in a logarithmic way as $\omega=(
{\prod}_{i=1}^e x_i)\eta$, where
$
 \eta=
\sum_{i=1}^{e}b_i({dx_i}/{z_i})+
\sum_{i=e+1}^\tau b_idx_i
$ and the coefficients $b_i$ are without common factor. We say that $p$ is a {\em corner point } for $({\mathcal M},{\mathcal F})$ when $e=\tau$. In this case, we have that
$
 \eta=
\sum_{i=1}^{\tau}b_i{dx_i}/{z_i}
$
and there is a {\em residual vector}  $\lambda$ given by  $\lambda=(b_1(0),b_2(0),\ldots,b_\tau (0))$.
\begin{proposition}
 \label{pro:simplecorner}
 Let $p\in K\subset M$ be a corner point for a GH-foliated space $({\mathcal M},{\mathcal F})$. The following statements are equivalent:
\begin{enumerate}
\item The residual vector $\lambda=(\lambda_1,\lambda_2,\ldots,\lambda_\tau)$ is non null, that is $\lambda\ne\mathbf{0}$.
\item We have $
    \sum_{i=1}^\tau m_i\lambda_i\ne0
    $,  for any $m\in {\mathbb Z}^{\tau}_{\geq 0}\setminus \{\mathbf{0}\}$.
\item The only germs of invariant hypersurfaces of $\mathcal F$ through $p$ are the irreducible components of the exceptional divisor.
\end{enumerate}
\end{proposition}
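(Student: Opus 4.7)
The plan is to establish $(2)\Rightarrow(1)$ directly, $(1)\Rightarrow(2)$ using both halves of the GH hypothesis, and $(2)\Leftrightarrow(3)$ through a normal-form analysis. The implication $(2)\Rightarrow(1)$ is immediate: specialising condition (2) to $m=e_k$ (the $k$-th standard basis vector) yields $\lambda_k\ne 0$ for each $k$, so in particular $\lambda\ne\mathbf{0}$.

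For $(1)\Rightarrow(2)$, I would first argue that every $\lambda_i$ is nonzero, then that no nontrivial non-negative integer resonance holds. For the first step, if $\lambda_i=0$ while $\lambda_j\ne 0$, the closed immersion of the $2$-plane $\{x_k=0:k\notin\{i,j\}\}$ into $(M,p)$ pulls $\omega$ back to a $2$-dimensional germ whose leading part, after clearing the logarithmic poles, is $\lambda_j x_i\,dx_j+(\text{order}\ge 2)$, which is the canonical form of a saddle node; this contradicts the CH assumption. With all $\lambda_i$ nonzero, suppose there is $m\ne\mathbf{0}$ with $\langle m,\lambda\rangle=0$. Integrability of $\eta$ together with the resonance allows a Poincar\'e--Dulac formal construction of a unit $u(x)$ with $u(0)=1$ such that $F=x_1^{m_1}\cdots x_\tau^{m_\tau}\cdot u(x)$ is a formal first integral of $\eta$. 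The associated pencil $\{F=s\}$ yields, after trivialisation along two transversal parameters, a non-invariant morphism $\phi:(\mathbb{C}^2,0)\to(M,p)$ exhibiting the dicritical configuration of the Preliminaries, contradicting non-dicriticality.

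For $(2)\Rightarrow(3)$, under the strong non-resonance the same Poincar\'e--Dulac scheme in fact linearises $\eta$ to $\eta_0=\sum_{i=1}^\tau \lambda_i\,dx_i/x_i$, since condition (2) excludes every resonance and so all cohomological obstructions vanish. In the linear model the invariant hypersurfaces are transparent: an irreducible formal $f=0$ invariant by $\eta_0$ must be (up to a unit) a monomial $x^\alpha$ with $\langle\alpha,\lambda\rangle=0$, and condition (2) forces $\alpha$ to be a single basis vector, so $H$ is a component of $E$. The converse $(3)\Rightarrow(2)$ is then the contrapositive of the resonance half of $(1)\Rightarrow(2)$: a failure of (2) produces, via the formal first integral $F$, invariant hypersurfaces not contained in $E$.

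The main obstacle is the passage from the formal to the analytic world in the resonance arguments. \emph{A priori} the unit $u$ in the formal first integral $F=x^m u$ need not converge, so producing a genuine invariant hypersurface from a formal pencil requires care. The argument uses non-dicriticality in the sharp form that a non-dicritical foliation at a corner cannot carry a nontrivial formal pencil of invariant hypersurfaces — otherwise a convergent $2$-pullback equivalent to $dy=0$ could be extracted, violating non-dicriticality. This formal-to-analytic bridge is where the full GH hypothesis is genuinely used, beyond the routine saddle-node exclusion employed in the first step.
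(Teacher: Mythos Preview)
The paper does not give its own argument; it simply cites \cite{Fer-M}. So there is nothing to compare your approach against in this text, and your outline must stand on its own. Two of its steps do not.

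First, a minor but real slip in $(1)\Rightarrow(2)$: for $\tau\ge 3$ the coordinate $2$-plane $\{x_k=0:k\notin\{i,j\}\}$ is \emph{invariant} for $\mathcal{F}$ (it is an intersection of components of $E$), so $\phi^*\omega=0$ there and you cannot read off a saddle node. You need a $2$-map that actually hits the interior of the corner, for instance a weighted diagonal $\phi(s,t)=(t^{a_1},\ldots,s,\ldots,t^{a_\tau})$ with $s$ in slot $i$; choosing the $a_k$ so that $\sum_{k\ne i}a_k\lambda_k\ne 0$ gives a pullback with linear part proportional to $s\,dt$, which is the saddle node you want.

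The serious gap is your resonance step. From $\langle m,\lambda\rangle=0$ you claim a Poincar\'e--Dulac construction of a formal first integral $F=x^m u$. But for $F$ to be a first integral one needs $dF/F$ proportional to $\eta$, whose residue vector is $\lambda$; comparing residues forces $m\propto\lambda$, not $\langle m,\lambda\rangle=0$. These conditions are different (indeed incompatible for $m\ne 0$ unless $\sum\lambda_i^2=0$), so no such $F$ exists in general, and the pencil $\{F=s\}$ you invoke is illusory. The same confusion recurs in your $(2)\Rightarrow(3)$, where you say an invariant irreducible $f$ must be a monomial $x^\alpha$ with $\langle\alpha,\lambda\rangle=0$; that is neither the shape of an irreducible germ nor the right constraint. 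The orthogonality $\langle m,\lambda\rangle=0$ is a \emph{resonance of the normal form}, not a first-integral condition: its role is that the weighted blow-up governed by $m$ (equivalently, a suitable sequence of admissible blow-ups) produces over $p$ either a non-invariant exceptional component (dicriticality) or a $2$-dimensional transversal of saddle-node type, and it is this dichotomy that contradicts the GH hypothesis. That is the mechanism in \cite{Fer-M}; your formal-pencil route does not reach it.
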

\begin{proof}See \cite{Fer-M}.
\end{proof}
\begin{definition}
 \label{def:simplecorner}
 Let $({\mathcal M},{\mathcal F})$ be a GH-foliated space and consider a point $p\in K\subset M$. We say that $p$ is a {\em simple corner} for $({\mathcal M},{\mathcal F})$  if and only if it is a corner point and the equivalent properties in the statement of Proposition  \ref{pro:simplecorner} are satisfied. We say that $p$ is a {\em simple trace point} for $({\mathcal M},{\mathcal F})$ if there is a nonsingular germ of invariant hypersurface $(H,p)$, not contained in $E$, such that $(E\cup H,p)$ is a normal crossings divisor of $(M,p)$ and $p$ is a simple corner for the new GH-foliated space
 $
 ((M,E\cup H;\{p\}),{\mathcal F})
 $. We say that $p$ is a simple point iff it is either a simple corner or a simple trace point.
\end{definition}

Note that the residual vector $\lambda$ is well defined as well for the case of simple trace points. Take a simple point $p$ for $({\mathcal M},{\mathcal F})$. We say that $p$ is of {\em real type} if the quotients $\lambda_i/\lambda_j$ are real numbers. In this case, we have two possibilities:
\begin{enumerate}
\item The point is of  {\em real saddle type} if $\lambda_i/\lambda_j>0$ for any $i,j$.
\item The point is of {\em nodal type} if there is at least one negative quotient $\lambda_i/\lambda_j<0$.
\end{enumerate}

\begin{remark} Assume that $p\in K$ is a point of nodal type and that $\omega$ is a local generator of $\mathcal F$ at $p$.
It is known that $\omega$ can be ``linearized''\cite{Cer-L, Cer-M}. That is, we can write $\omega=({\prod}_{i=1}^\tau x_i)\eta$ with
$\eta=\sum_{i=1}^\tau\lambda_i{dx_i}/{x_i}$,
where $\lambda_j/\lambda_i\in {\mathbb R}^*$ and there is at least one pair $i,j$ such that $\lambda_j/\lambda_i<0$. In this case, the invariant real hypersurface
$
{\prod}_{i=1}^\tau\vert x_i\vert^{\lambda_j/\lambda_1}=1
$
defines locally a ``separation'' in the space of leaves. Moreover, the components of $\operatorname{Sing}({\mathcal F})$ through the origin  are given by $x_i=x_j=0$, for $1\leq i<j\leq \tau$. If $\lambda_j/ \lambda_i<0$, then $x_i=x_j=0$ is of nodal type and if $\lambda_j/\lambda_i>0$, it is of real saddle type.
\end{remark}

\subsection{Reduction of Singularities}  We say that a GH-foliated space $({\mathcal M},{\mathcal F})$ is {\em locally desingularized} if and only if every $p\in K$ is a simple point. Assume that  $({\mathcal M},{\mathcal F})$ is locally desingularized. By the description of simple points in \cite{Can-C,Can}, we know that the singular locus $\operatorname{Sing}({\mathcal F})$ is the union of a finite family $
\{\Gamma_i\}
$ of nonsingular codimension one closed analytic subspaces  of $(M,K)$, having locally normal crossings with $\mathcal M$. We say that
$({\mathcal M},{\mathcal F})$  is {\em desingularized} when the family
$
\{\Gamma_i\}_{i=1}^\ell
$
 has normal crossings with $\mathcal M$.

Consider a GH-foliated space $({\mathcal M},{\mathcal F})$, not necessarily desingularized, and a point $p\in M$. We say that $\mathcal F$ and $E$ {\em have normal crossings at $p$} if, and only if $p\notin\operatorname{Sing}({\mathcal F})$ and the union $E\cup H$ has normal crossings at $p$, where $H$ is the only germ of invariant hypersurface for $\mathcal F$ through $p$. The {\em adapted singular locus } $\operatorname{Sing}({\mathcal F}, E)$ is defined by
$$
\operatorname{Sing}({\mathcal F}, E)=\{p\in M; \; {\mathcal F}\text{ and } E
 \text{ do not have normal crossings at } p \}.
$$
It is a closed analytic set of $M$ of codimension $\geq 2$. Moreover, we have that
$\operatorname{Sing}({\mathcal F})\subset \operatorname{Sing}({\mathcal F},E)$. Let us note that in the case that $({\mathcal M},{\mathcal F})$ is desingularized, we have
$\operatorname{Sing}({\mathcal F},E)=\operatorname{Sing}({\mathcal F})$. Note also that any analytic subset of $\operatorname{Sing}({\mathcal F},E)$ is invariant for $\mathcal F$.

We say that a standard blow-up $\pi:({\mathcal M}',{\mathcal F}')\rightarrow ({\mathcal M},{\mathcal F})$ is {\em admissible} when the center is contained in the adapted singular locus of $({\mathcal M}, {\mathcal F})$. An {\em admissible transformation} is a finite composition of admissible blow-ups.

\begin{definition}  A {\em reduction of singularities} of a GH-foliated space $({\mathcal M},{\mathcal F})$ is any admissible transformation
$({\mathcal M}',{\mathcal F}')\rightarrow ({\mathcal M},{\mathcal F})$  such that $({\mathcal M}',{\mathcal F}')$ is desingularized.
\end{definition}

The problem of the existence of reduction of singularities for a GH-foliated space is the same one as the problem of reduction of singularities of the corresponding invariant hypersurfaces.  The key observation for this is the characterization of simple corners given in Proposition \ref{pro:simplecorner}. The precise statement is the following one:
\begin{proposition} Consider a GH-foliated space $({\mathcal M}_0,{\mathcal F}_0)$, where ${\mathcal M}_0=({\mathbb C}^n,E^0;\{0\})$. The list
$
{\mathcal L}_0=\{H_1^0,H_2^0,\ldots, H^0_k\}
$
of invariant irreducible hypersurfaces is non-empty and finite. Moreover, any reduction of singularities
$
\pi:({\mathcal M},{\mathcal L})\rightarrow ({\mathcal M}_0,{\mathcal L}_0)
$
 induces a reduction of singularities
$\pi:({\mathcal M},{\mathcal F})\rightarrow ({\mathcal M}_0,{\mathcal F}_0)$.
\end{proposition}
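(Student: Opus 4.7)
The plan is to split the proposition into three independent assertions: (i) $\mathcal{L}_0$ is non-empty; (ii) $\mathcal{L}_0$ is finite; and (iii) any admissible reduction of singularities of $(\mathcal{M}_0,\mathcal{L}_0)$ induces a reduction of singularities of $(\mathcal{M}_0,\mathcal{F}_0)$. For (i) I would simply invoke the existence theorems for invariant hypersurfaces of non-dicritical codimension one foliations \cite{Can-C, Can-M}, which apply because a GH-foliation is by definition non-dicritical. For (ii), after noting that each irreducible invariant hypersurface at $0$ must appear as a non-exceptional component in every reduction of singularities of $\mathcal{F}_0$, finiteness follows from the compactness of the soul in the desingularized foliated space; alternatively, a restriction to a generic linear $2$-plane produces a non-dicritical generalized curve on $({\mathbb C}^2,0)$, whose finitely many invariant branches account for the traces of the hypersurfaces in $\mathcal{L}_0$.

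For the main assertion (iii), I would proceed in two steps. First, every admissible center for the pair $(\mathcal{M},\mathcal{L})$ lies in the adapted singular locus $\operatorname{Sing}(\mathcal{F},E)$: the center sits in $\operatorname{Sing}(E\cup H)$, and at any such point either $\mathcal{F}$ is singular, or $\mathcal{F}$ is non-singular but its unique invariant leaf through $p$ coincides locally with a component of $H$, so the failure of normal crossings of $E\cup H$ is already a failure of normal crossings of $E$ with the foliation. Hence $\pi$ is itself an admissible transformation for the foliated space. Second, assuming that $(\mathcal{M},\mathcal{L})$ is simple, I would verify that every $p\in K$ is a simple point of $(\mathcal{M},\mathcal{F})$. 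If $p$ avoids all the $H_i$, the only germs of invariant hypersurfaces through $p$ are components of $E$; by Proposition \ref{pro:simplecorner}(3) the residual vector is non-null and $p$ is a simple corner. If $p$ belongs to some $H_j$---and to exactly one, by simplicity of $(\mathcal{M},\mathcal{L})$---then enlarging the divisor by $H_j$ reduces us to the previous situation, yielding a simple trace point.

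The delicate step is the implication ``no extra invariant hypersurface through $p$ implies non-null residual vector'' of Proposition \ref{pro:simplecorner}, which is where the CH hypothesis is genuinely used. A vanishing residue at a corner would produce, via the linearization results of \cite{Cer-L, Cer-M} applied to a generic transverse $2$-plane, either a saddle-node transverse type or an extra invariant hypersurface along the corner---both excluded by our standing assumptions. This reduction to a two-dimensional transverse analysis is the main obstacle and is essentially the content of \cite{Fer-M}; I would import that argument rather than reprove it, and then simply assemble the three pieces above to obtain the full statement.
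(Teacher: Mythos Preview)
Your proposal is correct and, in substance, coincides with the paper's own treatment: the paper's proof of this proposition is the single line ``See \cite{Fer-M}'', and your sketch is precisely an unpacking of what that citation covers, organized around Proposition~\ref{pro:simplecorner} exactly as the paper suggests in the paragraph preceding the statement.

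One small imprecision worth tightening: when $p$ avoids all the $H_i$, you invoke Proposition~\ref{pro:simplecorner}(3)$\Rightarrow$(1) to conclude that $p$ is a simple corner, but that proposition is stated under the standing hypothesis that $p$ is already a corner point, i.e.\ $e_p=\tau_p$. You therefore need the intermediate step ``if every germ of invariant hypersurface through $p$ lies in $E$, then $e_p=\tau_p$''. This is exactly where the CH hypothesis does its work (a non-corner direction in a CH-foliation forces an additional local separatrix), and it is part of what \cite{Fer-M} proves; since you explicitly defer the delicate implication to that reference, the gap is only notational rather than substantive. Your justification that any local invariant germ at $p\in K$ not contained in $E$ must project onto some $H_i^0$ (hence $p\in H_i$) is the right bridge between the global list $\mathcal{L}_0$ and the local criterion of Proposition~\ref{pro:simplecorner}.
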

\begin{proof} See \cite{Fer-M}.
\end{proof}
As a consequence of Remark \ref{rk:reductionoflists}, there is at least one reduction of singularities for every GH-foliated space $({\mathcal M}_0,{\mathcal F}_0)$, where ${\mathcal M}_0=({\mathbb C}^n,E^0;\{0\})$.
\subsection{Invariant Hypersurfaces and Partial Separatrices}  Following \cite{Can, Can-C,Can-RV-S}, we recall here the description of the set of invariant hypersurfaces of a desingularized GH-foliated space $({\mathcal M},{\mathcal F})$, in terms of the so called ``partial separatrices''.

 Let $\Gamma$ be an irreducible component of $\operatorname{Sing}({\mathcal F})$. We recall that the generic points of $\Gamma$ have dimensional type two. Hence, we can consider the {\em generic transversal type} of $\mathcal F$ at $\Gamma$, defined by the two dimensional foliated space $(\Delta, \Delta\cap E, {\mathcal F}\vert_\Delta)$, where $\Delta$ is a two dimensional non singular germ, transversal to $\operatorname{Sing}({\mathcal F})$ at a generic point. The generic transversal type does not depend on the choice of the particular two dimensional section. Moreover, the two dimensional foliated space $(\Delta, \Delta\cap E, {\mathcal F}\vert_\Delta)$ is a germ at a simple point.
 \begin{definition}
 Consider an irreducible component $\Gamma$ of \/ $\operatorname{Sing}({\mathcal F})$. It  is of {\em trace type} when it is contained in a single irreducible component of $E$ and it is of {\em generic corner type} when it is contained in two irreducible components of $E$.
 \end{definition}
 \begin{remark}
  \label{rk:tracetypepoints}
  If $\Gamma$ a trace type irreducible component of $\operatorname{Sing}({\mathcal F})$, then any point of $\Gamma$ is a trace type singular point of $({\mathcal M},{\mathcal F})$, see Definition \ref{def:simplecorner}.
 This is equivalent to say that all the points in $\Gamma$ are trace type singular points. Moreover, any trace type singular point is contained in at least one trace type irreducible component of $\operatorname{Sing}({\mathcal F})$. Next example illustrates the situation. Take $(M,K)=({\mathbb C}^3,0)$, the divisor $E$ given by  $xy=0$ and let $\mathcal F$ be defined by
 $
 zdx/x+zdy/y+dz=0
 $; then the singular locus is given by
 $$
 (x=y=0)\cup (x=z=0)\cup (y=z=0),
 $$
 where the origin is a trace type singular point, the curves $x=z=0$  and $y=x=0$ are the trace type irreducible components of the singular locus and $x=y=0$ is an irreducible component of generic corner type.
\end{remark}

 Let us introduce some notations:

 We denote by $\operatorname{Sing}_{\mathcal M}^{\mathcal F}$ the set of irreducible components of $\operatorname{Sing}({\mathcal F})$. For any subset ${\mathcal A}$ of $\operatorname{Sing}_{\mathcal M}^{\mathcal F}$ we denote by $\vert{\mathcal A}\vert$ the {\em support} of $\mathcal A$, defined by
$$
\vert{\mathcal A}\vert=\cup \{\Gamma;\Gamma\in {\mathcal A}\}.
$$
For instance, we have $\vert \operatorname{Sing}_{\mathcal M}^{\mathcal F}\vert=\operatorname{Sing}({\mathcal F})$.

We denote by $\operatorname{Trace}_{\mathcal M}^{\mathcal F}$ the set of $\Gamma\in \operatorname{Sing}^{\mathcal F}_{\mathcal M}$ that are of trace type. In view of Remark  \ref{rk:tracetypepoints}, we have that $\vert \operatorname{Trace}_{\mathcal M}^{\mathcal F}\vert$ is the set of trace type singular points of $({\mathcal M},{\mathcal F})$.

 \begin{definition} A non empty subset  $\Sigma\subset \operatorname{Trace}_{\mathcal M}^{\mathcal F}$ is a {\em partial separatrix $\Sigma$ of $({\mathcal M},{\mathcal F})$} if and only if
 $\vert\Sigma\vert$ is a connected component of \/ $\vert \operatorname{Trace}_{\mathcal M}^{\mathcal F}\vert$.
 \end{definition}
The following properties, explained in \cite{Can-C}, show the relationship between partial separatrices and invariant hypersurfaces:
\begin{enumerate}
\item  Given a trace type simple singular point $p\in K$, there is a unique germ of invariant hypersurface $(H,p)$ not contained in $E$. Moreover we have that $$
 (H,p)\cap E=(\vert \operatorname{Trace}_{\mathcal M}^{\mathcal F}\vert,p).
 $$
 \item We can glue these local invariant hypersurfaces to obtain a natural bijection
$$
H\leftrightarrow \Sigma, \text{ with } H\cap E=\vert \Sigma\vert,
$$
between the set $\operatorname{Hyp}({\mathcal M},{\mathcal F})$ of closed irreducible invariant hypersurfaces of $({\mathcal M},{\mathcal F})$ not contained in $E$ and the set
$\operatorname{Par}({\mathcal M},{\mathcal F})$ of partial separatrices.
\end{enumerate}

 \begin{remark} Let $({\mathcal M},{\mathcal F})\rightarrow ({\mathcal M}_0,{\mathcal F}_0)$ be a reduction of singularities of $({\mathcal M}_0,{\mathcal F}_0)$, where ${\mathcal M}_0=({\mathbb C}^n,E^0; \{0\})$. By Grauert's Direct Image Theorem, there is a bijection between  $\operatorname{Hyp}({\mathcal M},{\mathcal F})$  and the set of irreducible germs of invariant hypersurfaces of ${\mathcal F}_0$, not contained in $E^0$. Hence, this last set is also faithfully represented by the set $\operatorname{Par}({\mathcal M},{\mathcal F})$ of partial separatrices.
 \end{remark}

\section{Nodal Separating Blocks}
We introduce here the definition and first properties of the {\em nodal separating blocks}
for a desingularized GH-foliated space $({\mathcal M},{\mathcal F})$.
The nodal separating blocks are the structures that may support  ``global barriers'' separating the space of leaves.

We define these objects under the assumption that $({\mathcal M},{\mathcal F})$ is a desingularized GH-foliated space; but, in order to assure that they separate the divisor into convenient connected components, we will need to enlarge the reduction of singularities to obtain the so-called {\em nodally reduced GH-foliated spaces}. In Subsection \ref{Nodally Desingularized Foliated Spaces} we show that this is always possible.

An element $\Gamma\in \operatorname{Sing}^{\mathcal F}_{\mathcal M}$ is of {\em generic nodal type} if and only if the generic transversal type of $({\mathcal M},{\mathcal F})$ at $\Gamma$ is a nodal singularity. We denote by $\operatorname{Nod}_{\mathcal M}^{\mathcal F}$ the set of $\Gamma\in \operatorname{Sing}_{\mathcal M}^{\mathcal F}$ that are of generic  nodal type.

\begin{remark}
\label{rk:nodalsingularpoints}
Not all the points of a given
$\Gamma\in \operatorname{Nod}_{\mathcal M}^{\mathcal F}$ are of nodal type. We only ask the property for the points with dimensional type equal to two. For instance, consider the space  ${\mathbb C}^3$, take $E=(xyz=0)$ and $\mathcal F$ the foliation given by $\eta=0$ with
$$
\eta=dx/x-\lambda dy/y+\mu dz/z,\quad  \lambda\in {\mathbb R}_{\geq 0}\setminus {\mathbb Q},\quad \mu\notin {\mathbb R}.
$$
The origin is not a nodal type singularity, but $x=y=0$ is an element of $\operatorname{Nod}^{\mathcal F}_{\mathcal M}$.
\end{remark}
We say that $\Gamma\in \operatorname{Nod}_{\mathcal M}^{\mathcal F}$ is {\em uninterrupted} if and only if all the points in $\Gamma$ are of nodal type.  We denote by $\operatorname{Nod}_{\mathcal M}^{*{\mathcal F}}$ the set of uninterrupted nodal components.
\begin{definition} A {\em nodal separating block} ${\mathcal B}$ for $({\mathcal M},{\mathcal F})$ is a subset ${\mathcal B}\subset \operatorname{Nod}_{\mathcal M}^{*{\mathcal F}}$ such that the support $\vert{\mathcal B}\vert$ is a connected component of $\vert
\operatorname{Nod}_{\mathcal M}^{\mathcal F}
\vert$.
 The {\em separator set} ${\mathcal S}_{\mathcal M}^{\mathcal F}$ is the union of the
nodal separating blocks.
\end{definition}

The supports $\vert B\vert$ of the nodal separating blocks $\mathcal B$ are the connected components of the support  $\vert {\mathcal S}_{\mathcal M}^{\mathcal F}\vert$ of the separator set ${\mathcal S}_{\mathcal M}^{\mathcal F}$. On the other hand, we have
$$
\vert {\mathcal S}_{\mathcal M}^{\mathcal F}\vert
\subset
 \vert \operatorname{Nod}_{\mathcal M}^{*{\mathcal F}}\vert
 \subset
 \{\text{nodal type singularities}\}
 \subset  \vert \operatorname{Nod}_{\mathcal M}^{{\mathcal F}}\vert,
$$
but the inclusions are not necessarily equalities. To see this, let us consider the example where $M={\mathbb C}^3$, $E=(xyz(x-1)=0)$ and $\mathcal F$ is given by $\omega=0$ with
$$
\omega=\frac{dx}{x}+\lambda\frac{dy}{y}-\mu\frac{dz}{z}-\alpha\frac{d(x-1)}{(x-1)},
$$
with $\lambda,\mu\in {\mathbb R}_{>0}\setminus {\mathbb Q}$, $\lambda/\mu\notin {\mathbb Q}$ and $\alpha\in {\mathbb C}\setminus {\mathbb R}$. In this case, we have that ${\mathcal S}_{\mathcal M}^{\mathcal F}$ is empty, and hence its support is also empty, moreover
\begin{eqnarray*}
\vert \operatorname{Nod}_{\mathcal M}^{*{\mathcal F}}\vert&=&(x=z=0)\\
\{\text{nodal type singularities}\}&=& (x=z=0)\cup ((y=z=0)\setminus\{(1,0,0)\})\\
\vert \operatorname{Nod}_{\mathcal M}^{{\mathcal F}}\vert&=& (x=z=0)\cup (y=z=0).
\end{eqnarray*}

\subsection{Nodally Reduced Foliated Spaces}
  \label{Nodally Desingularized Foliated Spaces}
  A GH-foliated space $({\mathcal M},{\mathcal F})$ is said to be {\em nodally reduced} if and only if is desingularized and each $\Gamma\in \operatorname{Nod}_{\mathcal M}^{\mathcal F}$ is of generic corner type. This property is a suitable condition to describe the separation of the ambient space by the nodal separating blocks. Indeed, in this situation, the nodal separating blocks provide locally a topological separation of the divisor $E$.

\begin{proposition}
\label{prop:completereduction} Let $({\mathcal M}',{\mathcal F}')$ be a desingularized GH-foliated space. There is an admisible transformation
$
({\mathcal M},{\mathcal F})\rightarrow ({\mathcal M}',{\mathcal F}')
$,
such that $({\mathcal M},{\mathcal F})$ is nodally reduced.
\end{proposition}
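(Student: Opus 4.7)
The plan is to identify the irreducible components of $\operatorname{Sing}(\mathcal F')$ of generic nodal type that fail the corner condition and eliminate them one at a time by admissible blow-ups. Let $\mathcal T\subset\operatorname{Nod}^{\mathcal F'}_{\mathcal M'}$ denote the subfamily of trace-type elements, i.e.\ those $\Gamma$ contained in a single irreducible component $E_i$ of $E'$. Since the soul of $\mathcal M'$ is compact, $\operatorname{Sing}(\mathcal F')$ has only finitely many irreducible components and hence $\mathcal T$ is finite; I would argue by induction on $|\mathcal T|$, the case $|\mathcal T|=0$ being trivial.

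For the inductive step, fix some $\Gamma\in\mathcal T$ and let $\pi$ be the blow-up of $\mathcal M'$ with center $\Gamma$. This is an admissible standard blow-up, since $\Gamma\subset\operatorname{Sing}(\mathcal F')\subset\operatorname{Sing}(\mathcal F',E')$ and, $(\mathcal M',\mathcal F')$ being desingularized, $\Gamma$ already has normal crossings with $\mathcal M'$. To analyze the effect I would work at a generic point $p\in\Gamma$: by the linearization of nodal transverse types recalled earlier, the foliation at $p$ is analytically a product of the planar nodal germ $x\,dy-\lambda y\,dx=0$ with $\Gamma$, where $E_i$ corresponds to $\{y=0\}$ and the unique non-exceptional separatrix to $\{x=0\}$. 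In the blow-up chart $(x,y)=(u,uv)$ this transforms, after division by $u$, into $(1-\lambda)v\,du+u\,dv$, whose only singular point is the corner of the new exceptional component $E^{*}:=\pi^{-1}(\Gamma)$ with the strict transform $\widetilde{E_i}$, with transverse eigenvalue ratio $\lambda-1$, irrational and hence simple.

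Fibring this computation over $\Gamma$, I conclude that, above $\Gamma$, the new foliated space has exactly one irreducible component of its singular locus, namely $E^{*}\cap\widetilde{E_i}$, of generic corner type and of real transverse type (nodal or real saddle according to the sign of $\lambda-1$). Away from a neighbourhood of $\Gamma$ nothing changes, so the remaining elements of $\operatorname{Nod}^{\mathcal F'}_{\mathcal M'}$ pass by strict transform to $\operatorname{Nod}$ elements of the new space with their trace/corner type preserved; in particular the new space is still a desingularized GH-foliated space whose trace-type nodal components are in bijection with $\mathcal T\setminus\{\Gamma\}$. Hence $|\mathcal T|$ strictly decreases and the induction terminates in $|\mathcal T|$ steps.

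The main obstacle is to rigorously establish two background claims: (a) that each admissible blow-up above preserves the desingularization, meaning simplicity of every point and normal crossings of the singular locus with $\mathcal M$, which relies on the standard stability of simple points of GH-foliations under invariant admissible blow-ups as in \cite{Can, Can-C}; and (b) that no new trace-type nodal component is spawned at the non-generic locus of $\Gamma$, in particular at the codimension-$\geq 3$ points where $\Gamma$ meets other components of $\operatorname{Sing}(\mathcal F')$. Point (b) would be handled by a direct local inspection, using that every new irreducible component of the singular locus supported in $E^{*}$ is forced by the chart computation to lie in $\widetilde{E_i}$ as well, hence is automatically of corner type.
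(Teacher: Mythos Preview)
Your overall strategy---blow up trace-type elements of $\operatorname{Nod}^{\mathcal F'}_{\mathcal M'}$ one at a time---is the same as the paper's, but your local analysis of the blow-up is incomplete and this breaks the induction.

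You only examine the chart $(x,y)=(u,uv)$, in which the unique visible singularity over the origin is the corner $E^*\cap\widetilde{E_i}$. But the strict transform of the non-divisorial separatrix $\{x=0\}$ is not visible in that chart; it lives in the other chart $(x,y)=(st,t)$, where the transformed foliation is (after dividing by $t$) given by $(1-\lambda)s\,dt-\lambda t\,ds=0$, with $E^*=\{t=0\}$ and $\widetilde{\{x=0\}}=\{s=0\}$. The point $s=t=0$ is a second singularity on $E^*$, and since $\{x=0\}$ was not in the divisor, only $E^*$ passes through it: this is a \emph{trace-type} singular component. Its residual ratio is $(\lambda-1)/\lambda$, which is negative (hence nodal) precisely when $0<\lambda<1$ in your normalization. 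Thus, for such $\lambda$, a new element of $\mathcal T$ is created over $\Gamma$, and your claim that the remaining trace-type nodal components are in bijection with $\mathcal T\setminus\{\Gamma\}$ is false; the induction on $|\mathcal T|$ does not go through.

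The paper closes this gap by tracking the eigenvalue along the chain: in their normalization ($E=(x=0)$, transversal type $\lambda y\,dx/x-dy=0$ with $\lambda>0$ irrational) the possible new trace-type nodal component $\Gamma_1$ created over $\Gamma$ has $\lambda_{\Gamma_1}=\lambda_\Gamma-1$. Iterating, after roughly $\lfloor\lambda_\Gamma\rfloor$ blow-ups the new transverse ratio becomes negative, the newly created trace singularity is no longer nodal, and only then does $\#(\operatorname{Nod}\cap\operatorname{Trace})$ drop. So the correct inductive invariant is not $|\mathcal T|$ alone, but $|\mathcal T|$ together with the (finite) number of further blow-ups needed to exhaust each $\Gamma\in\mathcal T$, governed by $\lambda_\Gamma$.
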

\begin{proof}
 Our objective is to obtain that
$\operatorname{Nod}_{{\mathcal M}}^{{\mathcal F}}\cap \operatorname{Trace}_{{\mathcal M}}^{{\mathcal F}} =\emptyset$.
Let $\Gamma$ be an element of $\operatorname{Nod}_{{\mathcal M}'}^{{\mathcal F}'}\cap \operatorname{Trace}_{{\mathcal M}'}^{{\mathcal F}'} $.
It has a transversal type of the form
$$
\lambda_{\Gamma} y\frac{dx}{x}-dy=0 , \quad E=(x=0),\, \lambda_{\Gamma}\in {\mathbb R}_{>0}\setminus {\mathbb Q}.
$$
Since $({\mathcal M}',{\mathcal F}')$ is desingularized, we know that $\Gamma$ is a center for an admissible blow-up. Let
$
({\mathcal M}_1,{\mathcal F}_1)\rightarrow  ({\mathcal M}',{\mathcal F}')
$
be the blow-up with center $\Gamma$. There is at most one $\Gamma_1\in \operatorname{Nod}_{{\mathcal M}_1}^{{\mathcal F}_1}\cap \operatorname{Trace}_{{\mathcal M}_1}^{{\mathcal F}_1}$, created over $\Gamma$. In this case $\lambda_{\Gamma_1}=\lambda_{\Gamma}-1$. After $k$ operations like this one, with $k-1 < \lambda\leq k$, we have that
 $$
\#(\operatorname{Nod}_{{\mathcal M}_k}^{{\mathcal F}_k}\cap \operatorname{Trace}_{{\mathcal M}_k}^{{\mathcal F}_k}) <
\#(\operatorname{Nod}_{{\mathcal M}'}^{{\mathcal F}'}\cap \operatorname{Trace}_{{\mathcal M}'}^{{\mathcal F}'}).
 $$
Thus, in finitely many steps, we obtain that $\operatorname{Nod}_{{\mathcal M}}^{{\mathcal F}}\cap \operatorname{Trace}_{{\mathcal M}}^{{\mathcal F}} =\emptyset$.
\end{proof}
\begin{lemma} Let $({\mathcal M},{\mathcal F})$ be a nodally reduced GH-foliated space. Then, all the points in $\vert\operatorname{Nod}_{\mathcal M}^{*\mathcal F}\vert$ are of corner type. ``A fortiori'', we also have that all the points in $\vert{\mathcal S}_{\mathcal M}^{\mathcal F}\vert$ are of corner type.
\end{lemma}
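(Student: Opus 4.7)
\emph{Proof plan.} The plan is to use the inclusion $\vert\mathcal S^{\mathcal F}_{\mathcal M}\vert\subseteq\vert\operatorname{Nod}^{*\mathcal F}_{\mathcal M}\vert$ to reduce the ``a fortiori'' clause to the main assertion, and then show that every $p\in\vert\operatorname{Nod}^{*\mathcal F}_{\mathcal M}\vert$ is of corner type. I would first fix $\Gamma\in\operatorname{Nod}^{*\mathcal F}_{\mathcal M}$ containing $p$. Since $({\mathcal M},{\mathcal F})$ is nodally reduced, $\Gamma$ is of generic corner type, so $\Gamma\subset E_a\cap E_b$ and hence $e_p(E)\geq 2$. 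Since $\Gamma$ is uninterrupted, $p$ is of nodal type, and in particular a simple point of $({\mathcal M},{\mathcal F})$.

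I would then argue by contradiction, assuming $p$ is a simple trace point, so $e_p(E)=\tau_p-1$ with $\tau:=\tau_p\geq 3$. Using Definition \ref{def:simplecorner} I choose local coordinates at $p$ with $E=(x_1\cdots x_{\tau-1}=0)$, the unique non-exceptional invariant hypersurface being $H=(x_\tau=0)$, and
$$\omega=(x_1\cdots x_\tau)\sum_{k=1}^\tau b'_k\frac{dx_k}{x_k},\qquad \lambda_k:=b'_k(0).$$
After reindexing, $E_a=(x_1=0)$ and $E_b=(x_2=0)$, so $\Gamma=(x_1=x_2=0)$ locally. Proposition \ref{pro:simplecorner}, applied to the corner for $E\cup H$, gives $\lambda_k\ne 0$ for every $k$; the nodal type of $p$ gives that all ratios $\lambda_i/\lambda_j$ are real, so after rescaling $\omega$ by a constant I may assume $\lambda_k\in{\mathbb R}^\ast$ for every $k$. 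The generic nodal type of $\Gamma$ forces $\lambda_2/\lambda_1<0$, so WLOG $\lambda_1>0>\lambda_2$; since $\lambda_\tau\in{\mathbb R}^\ast$, regardless of its sign there is $k\in\{1,2\}$ with $\lambda_\tau/\lambda_k<0$.

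To extract the contradiction I would use the direct computation of $\omega=0$ in the model above: the germs of irreducible components of $\operatorname{Sing}({\mathcal F})$ at $p$ are precisely the planes $(x_i=x_j=0)$ for $1\leq i<j\leq\tau$, and restricting $\omega$ to a two-dimensional slice transverse to such a germ at a generic nearby point exhibits the leading logarithmic form $\lambda_i\,dx_i/x_i+\lambda_j\,dx_j/x_j$, so the generic transversal type of the corresponding global component is the two-dimensional simple corner with residue ratio $\lambda_j/\lambda_i$. Applied to the pair $(k,\tau)$ produced above, the germ $(x_k=x_\tau=0)$ is thus of generic nodal type. On the other hand this germ lies in $E_k$ and in no other irreducible component of $E$, since for $l\leq\tau-1$ with $l\ne k$ the hypersurface $(x_l=0)$ meets $(x_k=x_\tau=0)$ in a proper subvariety and $H$ is not a component of $E$. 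Hence the global irreducible component $\Gamma_{k\tau}\in\operatorname{Sing}^{\mathcal F}_{\mathcal M}$ containing this germ is of trace type, giving $\Gamma_{k\tau}\in\operatorname{Nod}^{\mathcal F}_{\mathcal M}\cap\operatorname{Trace}^{\mathcal F}_{\mathcal M}$; but this intersection is empty, since nodal reducedness is equivalent to the condition that every element of $\operatorname{Nod}^{\mathcal F}_{\mathcal M}$ is of generic corner type (as used in Proposition \ref{prop:completereduction}). This contradicts the trace assumption, so $p$ is a corner.

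The only step I expect to need genuine care is the identification of the generic transversal residue ratio of the germ $(x_k=x_\tau=0)$ with the $p$-residue ratio $\lambda_\tau/\lambda_k$, i.e.\ the statement that the residues computed at $p$ really do determine the generic transverse type of the global component. This is the step where one uses that in a simple logarithmic model the residues are transported from the distinguished point to the generic point of the singular component, and it amounts to a short explicit slice computation with the form of $\omega$ above; I do not anticipate it being a real obstacle.
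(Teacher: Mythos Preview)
Your proposal is correct and follows essentially the same idea as the paper's proof: through a point $p\in\vert\operatorname{Nod}^{*\mathcal F}_{\mathcal M}\vert$ one locates a generically nodal irreducible component of $\operatorname{Sing}(\mathcal F)$ that would be of trace type if $p$ were not a corner, contradicting nodal reducedness. The paper argues directly (every invariant coordinate hyperplane $(x_i=0)$ through $p$ contains some nodal component, hence lies in $E$), while you argue by contradiction, singling out the hypothetical non-divisorial $H=(x_\tau=0)$ and producing a nodal trace component $(x_k=x_\tau=0)$; these are two packagings of the same mechanism.

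The one substantive difference is the paper's use of the \emph{linearization} of nodal singularities (Cerveau--Lins~Neto, Cerveau--Mattei): at a nodal point one has coordinates in which $\omega$ is exactly $\sum_i\lambda_i\,dx_i/x_i$, so the residue ratio along each germ $(x_i=x_j=0)$ is manifestly constant and equal to $\lambda_j/\lambda_i$. This is precisely the step you flagged as needing care (``the residues computed at $p$ really do determine the generic transverse type''); invoking linearization handles it in one line and avoids a separate constancy-of-residues argument. Your planned slice computation also works, but note that at a nearby point $q$ the leading logarithmic form is $b'_i(q)\,dx_i/x_i+b'_j(q)\,dx_j/x_j$, not $\lambda_i\,dx_i/x_i+\lambda_j\,dx_j/x_j$; one then needs that the ratio is locally constant along the dimensional-type-two locus (which follows from the local normal form at such points) and passes to the limit $\lambda_j/\lambda_i$. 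A minor streamlining: your detour through $\Gamma$ and the sign of $\lambda_2/\lambda_1$ is unnecessary, since once $p$ is nodal both signs already occur among $\lambda_1,\ldots,\lambda_\tau$, so some $k\le\tau-1$ with $\lambda_k\lambda_\tau<0$ exists directly.
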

\begin{proof} Take a point $p\in \vert\operatorname{Nod}_{\mathcal M}^{*\mathcal F}\vert$, we know that it is a nodal type singularity. In view of the linearization property of nodal singularities, there are local coordinates $(x_i)_{i=1}^n$ at $p$ such that $E\subset (\prod_{i=1}^\tau x_i=0)$ and $\mathcal F$ is locally given by the logarithmic $1$-form
$$
\omega=\sum_{i=1}^s\lambda_i\frac{dx_i}{x_i}- \sum_{i=s+1}^\tau\lambda_i\frac{dx_i}{x_i},
$$
where $2\leq s \leq \tau-1$ and $ \lambda_i>0$, for $i=1,2,\ldots,\tau$. The nodal components of the singular locus through $p$ are
$
x_i=x_j=0
$, for
$1\leq i\leq s$ and $s+1\leq j\leq \tau$.
Since they are of corner type, we conclude that $(x_i=0)\subset E$ and $(x_j=0)\subset E$ for any $1\leq i\leq s$ and $s+1\leq j\leq \tau$. Hence $E=(\prod_{i=1}^\tau x_i=0)$, locally at $p$. Then $p$ is a corner type point.
\end{proof}
 Let $({\mathcal M},{\mathcal F})$ be a nodally reduced GH-foliated space. Then any partial separatrix $\Sigma$ is such that $\vert \Sigma\vert\cap \vert{\mathcal S}_{\mathcal M}^{\mathcal F}\vert=\emptyset$. To see this, it is enough to note that all the points in $\vert \Sigma\vert$ are trace type points, whereas the points of
$\vert{\mathcal S}_{\mathcal M}^{\mathcal F}\vert$ are of corner type. In other words, we have that $\vert \Sigma\vert$ is contained in one connected component $C_\Sigma$ of $E\setminus \vert{\mathcal S}_{\mathcal M}^{\mathcal F}\vert$. In terms of the associated invariant hypersurface $H_\Sigma$, let us note that $H_\Sigma\cap E= \vert\Sigma\vert$ and hence $H_\Sigma\cap E\subset C_\Sigma$.

\subsection{Connection Outside of the Separator Set}
\label{Connection Outside of the Separator Set}

Let $({\mathcal M},{\mathcal F)}$ be a nodally reduced GH-foliated space. In this subsection, we justify the definition of separator set by means of Proposition \ref{prop:separationbynodalblocks} below.

Recall the decomposition $E=\cup_{i\in I}E_i$ of the divisor $E$ into a finite union of nonsingular hypersurfaces. Denote by ${\mathcal H}_{\mathcal M}$ be the set whose elements are the subsets $J\subset I$, such that $E_J\ne \emptyset$, where $E_J=\cap _{j\in J}E_j$. In this way, we have a stratification of $M$ induced by $E$, whose strata are associated to the elements of ${\mathcal H}_{\mathcal M}$. Given $J\in {\mathcal H}_{\mathcal M}$, the corresponding stratum $S_J$ is
\begin{equation}
\label{eq:strata}
S_J=
E_J \setminus \cup \{ E_{J'}; \; J'\supset J,\, J'\ne J \}.
\end{equation}
It is connected in view of the assumptions on standard ambient spaces.

For any $k\geq 0$, we denote by ${\mathcal H}_{\mathcal M}(k)$ the set of $J\in {\mathcal H}_{\mathcal M}$ with exactly $k$-elements.
\begin{definition}
Given ${A}\subset {\mathcal H}_{\mathcal M}(2)$, we say that two irreducible components $E_i$ and $E_j$ of $E$ {\em are connected outside ${A}$} if, and only if, there is a finite sequence
$$
i=i_0,i_1,i_2,\ldots,i_r=j,
$$
such that $\{i_{k},i_{k+1}\}\in {\mathcal H}_{\mathcal M}(2)\setminus { A}$, for any $k=0,1,\ldots,r-1$.
\end{definition}
\begin{remark}
Once we take appropriate representatives of the germs $(M,K)$ and $(E,E\cap K)$, to say that $E_i$ and $E_j$ are connected outside $A\subset {\mathcal H}_{\mathcal M}(2)$ is equivalent to say that we can topologically connect the points in the strata of $S_{\{i\}}\subset E_i$ with the points in  $S_{\{j\}}\subset E_j$ by means of pathes contained in $E\setminus \cup_{J\in A}E_J$.
\end{remark}

 Since any nodal irreducible component of $\operatorname{Sing}({\mathcal F})$ is of corner type,  we can identify $\operatorname{Nod}_{\mathcal M}^{\mathcal F}$ to a subset of ${\mathcal H}_{\mathcal M}(2)$.

\begin{proposition}
\label{prop:separationbynodalblocks} Let $({\mathcal M}, {\mathcal F})$ be a
nodally reduced foliated space and take two irreducible components $E_i$, $E_j$ of $E$. The following statements are equivalent:
 \begin{enumerate}
\item $E_i$ and $E_j$ are connected outside $\operatorname{Nod}_{\mathcal M}^{\mathcal F}$.
 \item $E_i$ and $E_j$ are connected outside $\mathcal{S}_{\mathcal M}^{\mathcal F}$.
 \end{enumerate}
\end{proposition}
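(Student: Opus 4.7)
The implication $(1)\Rightarrow(2)$ is immediate from $\mathcal{S}_{\mathcal{M}}^{\mathcal{F}}\subseteq\operatorname{Nod}_{\mathcal{M}}^{\mathcal{F}}$ as subsets of $\mathcal{H}_{\mathcal{M}}(2)$: any chain whose successive pairs avoid $\operatorname{Nod}_{\mathcal{M}}^{\mathcal{F}}$ avoids $\mathcal{S}_{\mathcal{M}}^{\mathcal{F}}$ a fortiori. For the converse $(2)\Rightarrow(1)$, by transitivity of the equivalence relation ``connected outside $A$'' it suffices to produce, for each pair $\{a,b\}\in \operatorname{Nod}_{\mathcal{M}}^{\mathcal{F}}\setminus \mathcal{S}_{\mathcal{M}}^{\mathcal{F}}$, a chain from $a$ to $b$ whose successive pairs lie in $\mathcal{H}_{\mathcal{M}}(2)\setminus \operatorname{Nod}_{\mathcal{M}}^{\mathcal{F}}$; each ``bad'' edge in a given outside-$\mathcal{S}$ chain may then be substituted by such a detour.

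The key local tool is the following: at a non-nodal simple corner point $p$ of dimensional type $\tau_p\geq 3$, with local residual vector $(\lambda_{1},\dots,\lambda_{\tau_p})$, and given any pair $\{i,j\}$ of indices meeting at $p$ with $\lambda_{j}/\lambda_{i}\in \mathbb{R}_{<0}$, there exists $k$ with $E_{k}\ni p$ such that $\{i,k\},\{k,j\}\notin\operatorname{Nod}_{\mathcal{M}}^{\mathcal{F}}$. Indeed, since $p$ is not of nodal type and $\lambda_{j}/\lambda_{i}$ is real, some $\lambda_{k}/\lambda_{i}$ fails to be real; then $\lambda_{k}/\lambda_{j}=(\lambda_{k}/\lambda_{i})(\lambda_{i}/\lambda_{j})$ is also non-real, placing the corresponding two pairs outside $\operatorname{Nod}_{\mathcal{M}}^{\mathcal{F}}$ and giving a detour $i\to k\to j$ among the indices meeting at $p$.

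For the globalization, let $B$ be the connected component of $|\operatorname{Nod}_{\mathcal{M}}^{\mathcal{F}}|$ containing $E_{a}\cap E_{b}$. The hypothesis $\{a,b\}\notin \mathcal{S}_{\mathcal{M}}^{\mathcal{F}}$ forces $B$ not to be a nodal separating block, so some $\Gamma^{*}\subset B$ fails to be uninterrupted and contains a non-nodal simple point $p^{*}$. Since the finitely many $\Gamma\subset B$ form a connected incidence graph (else $B$ would split as a disjoint union of closed pieces), there is a chain $\Gamma_{0}=E_{a}\cap E_{b},\Gamma_{1},\dots,\Gamma_{m}=\Gamma^{*}$ with $\Gamma_{\ell}\cap \Gamma_{\ell+1}\ne\emptyset$ for all $\ell$. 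Write $\Gamma_{\ell}=E_{c_{\ell}}\cap E_{d_{\ell}}$ with $\{c_{0},d_{0}\}=\{a,b\}$. At each meeting point $q_{\ell}\in \Gamma_{\ell}\cap \Gamma_{\ell+1}$ the dimensional type is $\geq 3$; when $q_{\ell}$ is of nodal type, the residuals split into a positive and a negative sign class, so one can orient so that $c_{\ell},c_{\ell+1}$ lie in one class and $d_{\ell},d_{\ell+1}$ in the other, making $\{c_{\ell},c_{\ell+1}\}$ and $\{d_{\ell},d_{\ell+1}\}$ real-saddle pairs outside $\operatorname{Nod}_{\mathcal{M}}^{\mathcal{F}}$; when $q_{\ell}$ is non-nodal, the local lemma supplies the same connections without any sign constraint. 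Finally, the local lemma applied at $p^{*}$ to the pair $\{c_{m},d_{m}\}$ produces a detour outside $\operatorname{Nod}_{\mathcal{M}}^{\mathcal{F}}$, and concatenation yields the required chain from $a=c_{0}$ to $b=d_{0}$.

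The main obstacle is the combinatorial bookkeeping in this globalization: the unordered pairs $\{c_{\ell},d_{\ell}\}$ must be oriented consistently along the chain so that the local sign classes at successive nodal-type meeting points agree, ensuring that the elementary detours concatenate coherently. The conceptual content is, however, transparent: the real-parity obstruction to a local detour at a nodal-type corner is destroyed by any single non-nodal point of $B$, and such a point exists precisely because $\{a,b\}\notin \mathcal{S}_{\mathcal{M}}^{\mathcal{F}}$.
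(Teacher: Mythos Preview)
Your argument is essentially the paper's own: reduce $(2)\Rightarrow(1)$ to showing that every edge $\{a,b\}\in\operatorname{Nod}_{\mathcal M}^{\mathcal F}\setminus\mathcal{S}_{\mathcal M}^{\mathcal F}$ admits a detour outside $\operatorname{Nod}_{\mathcal M}^{\mathcal F}$; locate a non-nodal point in the nodal block and exploit a non-real residual ratio there to produce a local bypass through a third divisor component; and propagate along a chain of nodal components using the sign partition at nodal meeting points. The paper organizes the globalization as a clean induction on the chain length $s$ (arranging all intermediate meeting points $p_{u,u+1}$ to be nodal, which one may always do by truncating the chain at the first non-nodal meeting point), whereas you write the concatenation out directly and treat non-nodal intermediate points as an extra case; your ``consistent orientation'' worry is not a genuine obstacle, since the orientation of $(c_{\ell+1},d_{\ell+1})$ is simply fixed at each $q_\ell$ by matching sign classes with $(c_\ell,d_\ell)$, step by step, exactly as the paper does when it writes ``take $E_{k_1}$ of the same sign as $E_k$''.

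One point you (and the paper) pass over: your local lemma is stated for a non-nodal simple \emph{corner} point, but the non-nodal point $p^*\in\Gamma^*$ you reach need not be a corner---it could be a simple trace point with $\tau_{p^*}=3$, $e_{p^*}=2$, in which case the coordinate with non-real ratio corresponds to the trace hypersurface $H$ rather than to any $E_k$, and no third divisor component is available at $p^*$ for the bypass. The paper's proof writes ``$E_r=(x_3=0)$'' at the analogous step without checking that $(x_3=0)\subset E$, so this is a shared lacuna rather than a defect specific to your write-up.
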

\begin{proof} Since $\operatorname{Nod}_{\mathcal M}^{\mathcal F}\supset \mathcal{S}_{\mathcal M}^{\mathcal F}$, we have that (1) implies (2). Now, it is enough to show that
if $\Gamma=E_k\cap E_\ell\in \operatorname{Nod}_{\mathcal M}^{\mathcal F}\setminus {\mathcal S}_{\mathcal M}^{\mathcal F}$, then $E_k$ and $E_\ell$ are connected outside $\operatorname{Nod}_{\mathcal M}^{\mathcal F}$.

Assume first that there is a not nodal point $p\in \Gamma$. The point $p$ cannot be a real saddle and hence it is not of real type. This means that in suitable local coordinates $x_1,x_2,\ldots,x_n$ at $p$, we have $E_k=(x_1=0)$, $E_\ell=(x_2=0)$ and the residual vector
$$
(\lambda_1,\lambda_2,\lambda_3,\ldots,\lambda_\tau)
$$
satisfies $\lambda_2/\lambda_1<0$, $\lambda_3/\lambda_1\notin {\mathbb R}$. Hence $\lambda_3/\lambda_2\notin {\mathbb R}$. If
$
E_r=(x_3=0)
$,
 we see that
 $p\in E_r$ and both  $E_k\cap E_r$ and $E_r\cap E_\ell$ are not generically nodal. Then $E_k$ and $E_\ell$ are connected outside $\operatorname{Nod}_{\mathcal M}^{\mathcal F}$.

 In a general way, let $\mathcal B\subset \operatorname{Nod}_{\mathcal M}^{\mathcal F}$ be the ``nodal block'' such that $\Gamma\in \mathcal B$. By this, we mean that $\vert{\mathcal B}\vert$ is the connected component of $\vert \operatorname{Nod}_{\mathcal M}^{\mathcal F}\vert$ containing $\Gamma$.
  We know that there is a point $p\in \vert \mathcal B\vert$ that is not of nodal type: otherwise $\mathcal B$ would be a nodal separating block and hence $\Gamma$ an element of the separator set ${\mathcal S}_{\mathcal M}^{\mathcal F}$. We find a finite sequence
$$
\Gamma=\Gamma_0\ni p_{01}\in \Gamma_1\ni p_{12}\in \Gamma_2\ni\cdots \in \Gamma_s\ni p,
$$
where $\Gamma_u\in \operatorname{Nod}_{\mathcal M}^{\mathcal F}$ for $u=0,1,\ldots,s$ and the points $p_{u,u+1}$ are nodal points for $u=0,1,\ldots,s-1$. If $s=0$, we are done.  Assume that $s\geq 1$. Since $p_{01}$ is a nodal point,  we can write
$\Gamma_1=E_{k_1}\cap E_{\ell_1}$, where $E_k\cap E_{k_1}$ and $E_{\ell}\cap E_{\ell_1}$ are not generically nodal (to see this, take $E_{k_1}$ of the ``same sign'' as $E_k$ and $E_{\ell_1}$ of the ``same sign'' as $E_\ell$). Working by induction on $s$, we can connect $E_{k_1}$ and $E_{\ell_1}$ outside  $\operatorname{Nod}_{\mathcal M}^{\mathcal F}$. We end since $E_k$ and $E_{k_1}$, respectively $E_\ell$ and $E_{\ell_1}$, are connected outside  $\operatorname{Nod}_{\mathcal M}^{\mathcal F}$.
\end{proof}
As an immediate consequence, we obtain the following corollary:

\begin{corollary}
\label{cor:separationbynodalblocks} Let $({\mathcal M}, {\mathcal F})$ be a
nodally reduced foliated space. There is a bijection
$$
\left\{
\begin{array}{c}
\mbox{ connected components of } \\
E\setminus \vert\operatorname{Nod}_{\mathcal M}^{\mathcal F}\vert
\end{array}
\right
\}
\leftrightarrow
\left\{
\begin{array}{c}
\mbox{ connected components of } \\
E\setminus \vert{\mathcal S}_{\mathcal M}^{\mathcal F}\vert
\end{array}
\right
\}
$$
such that a connected component $C$ of $E\setminus \vert\operatorname{Nod}_{\mathcal M}^{\mathcal F}\vert$ corresponds to the only connected component $C'$ of $E\setminus \vert{\mathcal S}_{\mathcal M}^{\mathcal F}\vert$ that contains $C$.
\end{corollary}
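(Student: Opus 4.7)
The plan is to reduce the corollary to Proposition~\ref{prop:separationbynodalblocks} by exploiting the correspondence between connected components of $E \setminus \bigcup_{J \in A} E_J$ (for $A \subset \mathcal H_{\mathcal M}(2)$) and equivalence classes of irreducible components of $E$ under the combinatorial relation ``connected outside $A$''. Since $|\mathcal S_{\mathcal M}^{\mathcal F}| \subset |\operatorname{Nod}_{\mathcal M}^{\mathcal F}|$, we have $E \setminus |\operatorname{Nod}_{\mathcal M}^{\mathcal F}| \subset E \setminus |\mathcal S_{\mathcal M}^{\mathcal F}|$, so each connected component $C$ of the former lies in a unique component $C'$ of the latter. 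This defines the candidate map $C \mapsto C'$, and it only remains to prove bijectivity.

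For surjectivity, I would use the open strata $S_{\{i\}} = E_i \setminus \bigcup_{j \neq i} E_{\{i,j\}}$. Each $S_{\{i\}}$ is by construction disjoint from every $E_J$ with $|J| \geq 2$, and in particular from $|\operatorname{Nod}_{\mathcal M}^{\mathcal F}|$; moreover the union $\bigcup_i S_{\{i\}}$ is open and dense in $E$. Given a component $C'$ of $E \setminus |\mathcal S_{\mathcal M}^{\mathcal F}|$, which is itself open in $E$, there is an index $i$ with $C' \cap S_{\{i\}} \neq \emptyset$; any such intersection point $p$ lies in $E \setminus |\operatorname{Nod}_{\mathcal M}^{\mathcal F}|$ and its connected component $C$ there is contained in $C'$, proving that $C \mapsto C'$ hits $C'$.

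For injectivity, suppose $C_1, C_2$ are two components of $E \setminus |\operatorname{Nod}_{\mathcal M}^{\mathcal F}|$ both contained in a common $C'$, and pick $p_k \in C_k \cap S_{\{i_k\}}$ as above. A topological path from $p_1$ to $p_2$ inside $C' \subset E \setminus |\mathcal S_{\mathcal M}^{\mathcal F}|$ translates, by the Remark preceding Proposition~\ref{prop:separationbynodalblocks}, into the combinatorial statement that $E_{i_1}$ and $E_{i_2}$ are connected outside $\mathcal S_{\mathcal M}^{\mathcal F}$. Proposition~\ref{prop:separationbynodalblocks} then upgrades this to a connection outside $\operatorname{Nod}_{\mathcal M}^{\mathcal F}$, which in turn produces an actual path from $p_1$ to $p_2$ inside $E \setminus |\operatorname{Nod}_{\mathcal M}^{\mathcal F}|$, forcing $C_1 = C_2$.

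The one step that requires genuine care is the two-way dictionary between ``$E_i$ and $E_j$ are connected outside $A$'' and ``$S_{\{i\}}$ and $S_{\{j\}}$ lie in the same topological component of $E \setminus \bigcup_{J \in A} E_J$''. The forward direction assembles a path from a combinatorial chain $i_0, \ldots, i_r$ by using that each $E_{i_k} \setminus \bigcup_{J \in A} E_J$ is a smooth irreducible complex analytic variety minus a proper closed analytic subset (hence connected) and that the overlap $S_{\{i_k, i_{k+1}\}}$ is nonempty and disjoint from $\bigcup_{J \in A} E_J$; the reverse direction reads off the chain from a topological path placed in general position with respect to the stratification induced by $E$. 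Once this translation is in hand, the bijection claimed in the corollary follows at once.
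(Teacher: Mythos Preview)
Your argument is correct and follows exactly the route the paper intends: the paper states the corollary as ``an immediate consequence'' of Proposition~\ref{prop:separationbynodalblocks} and gives no further proof, so what you have written is precisely the unpacking of that immediacy---the combinatorial/topological dictionary (via the Remark preceding the proposition) identifies components of $E\setminus\bigcup_{J\in A}E_J$ with equivalence classes of the relation ``connected outside $A$'', and Proposition~\ref{prop:separationbynodalblocks} says these classes coincide for $A=\operatorname{Nod}_{\mathcal M}^{\mathcal F}$ and $A={\mathcal S}_{\mathcal M}^{\mathcal F}$.
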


\section{The complement of Nodal Separating  blocks}

Let
$({\mathcal M},{\mathcal F})$ be a nodally reduced GH-foliated space.  In this section we count the number of connected components of $
E \setminus \vert {\mathcal S}_{\mathcal M}^{\mathcal F}\vert
$ in terms of the number of nodal separating blocks of $({\mathcal M}, {\mathcal F})$.
 We do it when $\mathcal M$ is {\em combinatorially simply connected}. In next section we prove that  this  property holds for any ambient space $\mathcal M$ obtained from any standard transformation of $({\mathbb C}^n,0)$.

For the above purpose, we work in an abstract and combinatorial way, based on the combinatorial strata structure {\em ${\mathcal H}_{\mathcal M}$} associated to ${\mathcal M}$. The definition of combinatorial strata structure is essentially the same one of  abstract  simplicial complex (see \cite{Koz}); it has also been used for describing the combinatorial part of a desingularization procedure in\cite{Moli}, under the name of ``support fabric''. In low dimensional cases, we recover the classical idea of the dual graph of a divisor.

\subsection{Combinatorial Strata Structures}

Let $I$ be a finite set of indices and denote by ${\mathcal P}(I)$ the set of subsets of $I$. Define the topology on ${\mathcal P}(I)$ whose open sets are the unions of sets of the form ${\mathcal P}(J)$, where $J\subset I$.
A {\em combinatorial strata structure, with minimal set of indices $I$,} is an open subset ${\mathcal H}\subset{\mathcal P}(I)$ such that $\{i\}\in {\mathcal H}$, for any $i\in I$. The elements of $\mathcal H$ are called {\em ${\mathcal H}$-strata} or simply {\em strata} if no confusion arises. The {\em maximal codimension of $\mathcal H$} is the maximal number of elements of a stratum in $\mathcal H$. The closure $\operatorname{Cl}_{\mathcal H}(A)$ of a subset $A\subset {\mathcal H}$ is $\operatorname{Cl}_{\mathcal H}(A)=\cup_{J\in A}\operatorname{Cl}_{\mathcal H}(\{J\})$, where $\operatorname{Cl}_{\mathcal H}(\{J\})=\{J'\in {\mathcal H};\; J'\supset J\}$.

\begin{example} If ${\mathcal M}=(M,E;K)$ is a standard ambient space, then ${\mathcal H}_{\mathcal M}$, as defined in Subsection \ref{Connection Outside of the Separator Set}, is a combinatorial strata structure.
\end{example}

Let $\mathcal H$ be a combinatorial strata structure  with minimal set of indices $I$.

For any $k\geq 0$, we denote by ${\mathcal H}(k)$ the set of $J\in {\mathcal H}$ such that $J$ has exactly $k$ elements. A {\em $k$-path $\gamma$ in ${\mathcal H}$ of length $s\geq 0$} is a finite sequence
$
\gamma=(J_0,J_{1}, \ldots,J_{s})
$,
such that $J_t\in {\mathcal H}(k)$, for $t=0,1,\ldots,s$ and  $J_{t-1}\cup J_t\in {\mathcal H}(k+1)$ ,
for $t=1,2,\ldots,s$. We say that {\em $\gamma$ joins $J_0$ and $J_s$}.
The {\em support $\mbox{\rm Sop}(\gamma)$  and the subsupport $\mbox{\rm Sub}(\gamma)$ of $\gamma$} are respectively given by
\begin{equation*}
\begin{array}{lcccc}
\mbox{\rm Sop}(\gamma)&=&\{J_0,J_1,\ldots,J_s\}&\subset & {\mathcal H}(k)\\
\mbox{\rm Sub}(\gamma)&=&(J_0\cup J_1,J_1\cup J_2,\ldots,J_{s-1}\cup J_s)&\in& {\mathcal H}(k+1)^{s}.
\end{array}
\end{equation*}
Given two $k$-paths $\gamma$ and $\gamma'$ in $\mathcal H$, joining respectively $J$ with $J'$ and $J'$ with $J''$, the {\em  composition}
$\gamma*\gamma'$ is a $k$-path in $\mathcal H$, joining $J$ and $J''$ and defined in a evident way.
The {\em reverse $\gamma^{-1}$} is defined  by reversing the ordering in the sequence $\gamma$.

 A subset $A\subset {\mathcal H}(k)$ is {\em $k$-connected in ${\mathcal H}$} if and only if for any $J,J'\in A$ there is a $k$-path $\gamma$ in $\mathcal H$ joining $J$ and $J'$ with support contained in $A$.
The union of two $k$-connected sets $A_1,A_2\subset {\mathcal H}(k)$ in $\mathcal H$ such that $A_1\cap A_2\ne\emptyset$ is also $k$-connected in $\mathcal H$. A {\em $k$-connected component of $A\subset {\mathcal H}(k)$ in ${\mathcal H}$} is any nonempty subset of $A$ that is $k$-connected in $\mathcal H$ and maximal with this property. Any $A\subset {\mathcal H}(k)$ has a unique partition into $k$-connected components in $\mathcal H$.

Let us simplify the terminology as follows: we say that {\em ${\mathcal H}$ is $k$-connected} if and only if ${\mathcal H}(k)$ is $k$-connected in $\mathcal H$; the {\em $k$-connected components of $\mathcal H$} are the $k$-connected components of ${\mathcal H}(k)$ in $\mathcal H$.
\begin{remark}
\label{rk:connected components}
Concerning the $1$-conectedness, let us precise the concept of connected components.  There are a unique partition $I=\cup_{\lambda \in \Lambda}^s{I_\lambda}$ and a decomposition
$$
{\mathcal H}=\cup_{\lambda\in \Lambda}{\mathcal H}_\lambda,
$$
in such a way that each ${\mathcal H}_\lambda$ is a $1$-connected combinatorial strata structure with minimal set of indices $I_\lambda$ and ${\mathcal H}_\lambda\cap {\mathcal H}_{\lambda'}=\{\emptyset\}$ for $\lambda\ne \lambda'$. We say that the ${\mathcal H}_\lambda$ are the {\em connected components of ${\mathcal H}$}. To see this, we take the set  $I_\lambda$ containing $i\in I$ to be the set of indices $j\in I$ such that $\{i\}$ and $\{j\}$ are connected by a $1$-path. We define ${\mathcal H_\lambda}=\{J\in {\mathcal H};\, J\subset I_{\lambda}\}$.
\end{remark}

An {\em elementary homotopic pair $(\epsilon,\epsilon')$ in $\mathcal H$} is a pair
of $1$-paths in $\mathcal H$ such that, up to reordering of $\epsilon,\epsilon'$, we have that either $\epsilon=\epsilon'$, or $\epsilon=(\{i_1\},\{i_2\}, \{i_1\})$ and $\epsilon'=(\{i_1\})$ or, finally, we have
$$
\epsilon=(\{i_1\},\{i_2\})\text{ and } \epsilon'=(\{i_1\},\{i_3\},\{i_2\}),
$$
where
   $
   (\{i_1,i_3\},\{i_3,i_2\})
   $
   is a $2$-path in $\mathcal H$.

Take two $1$-paths $\gamma_1,\gamma_2$ in $\mathcal H$. We say that $\gamma_2$ has been obtained from $\gamma_1$ by an {\em elementary homotopy} (that we denote $\gamma_1\rightsquigarrow\gamma_2$) if there is an elementary homotopic pair $(\epsilon_1,\epsilon_2)$ and $1$-paths $\delta,\rho$ in $\mathcal H$, such that
$$
\gamma_1=\delta*\epsilon_1*\rho; \quad  \gamma_2=\delta*\epsilon_2*\rho.
$$
Consider a subset $A\subset {\mathcal H}(1)$. Two
$1$-paths $\gamma,\gamma'$ in ${\mathcal H}$ are {\em homotopically equivalent in ${\mathcal H}$ with support in $A$}, if there is a finite sequence of elementary homotopies
 $$
 \gamma=\gamma_0\rightsquigarrow\gamma_1\rightsquigarrow\cdots\rightsquigarrow\gamma_s=\gamma'
 $$
 such that $\mbox{\rm Sop}(\gamma_\ell)\subset A$ for $\ell=0,1,\ldots, s$.

A $1$-connected subset  $A\subset {\mathcal H}(1)$ is {\em simply connected in $\mathcal H$} if, and only if,
any two $1$-paths $\gamma,\gamma'$ in $\mathcal H$, with support in $A$ and joining the same strata, are homotopically equivalent in $\mathcal H$ with support in $A$. We say that $\mathcal H$ is {\em simply connected } if and only if $\mathcal H(1)$ is simply connected in $\mathcal H$.

\begin{remark}
 \label{rk:tresdimensionalcase}
 Let us consider a combinatorial strata structure $\mathcal H\subset {\mathcal P}(I)$ with minimal set of indices $I$. The subset ${\mathcal H}_3\subset {\mathcal H}$ defined by
\begin{equation}
\label{eq:redtres}
{\mathcal H}_3=\{\emptyset\}\cup {\mathcal H}(1)\cup {\mathcal H}(2)\cup {\mathcal H}(3)
\end{equation}
is an open set of $\mathcal H$ and hence of ${\mathcal P}(I)$. Then ${\mathcal H}_3$ is a
combinatorial strata structure with minimal set of indices $I$. The $1$-paths and the $2$-paths in ${\mathcal H}_3$ are the same ones as  in ${\mathcal H}$. In particular, a subset $A\subset {\mathcal H}(1)$,  is $1$-connected, respectively simply connected,  in ${\mathcal H}$, if and only if it is  $1$-connected, respectively simply connected,  in ${\mathcal H}_3$.
Moreover  a subset $B\subset {\mathcal H}(2)={\mathcal H}_3(2)$ is $2$-connected in ${\mathcal H}$ if and only if it is $2$-connected in ${\mathcal H}_3$.
Hence the $1$-connectedness and the simple connectedness of $A\subset {\mathcal H}(1)$, as well as the $2$-connectedness of $B\subset {\mathcal H}(2)$, are questions ``in maximal codimension three''.
\end{remark}

\subsection{Abstract Datum of Nodal Strata}
A {\em datum of nodal strata $({\mathcal N},\{P_J\}_{J\in{\mathcal N}})$ in $\mathcal H$} is given by a subset ${\mathcal N}\subset {\mathcal H}$ and a partition $P_J=\{J_{+},J_{-}\}$ of any $J\in {\mathcal N}$ in two nonempty elements, satisfying the following property
\begin{quote}
``For any $J\in {\mathcal N}$ and $J'\subset J$ we have that $J'\in {\mathcal N}$ if and only if $J'\cap J_+\ne\emptyset\ne J'\cap J_{-}$. In this case $P_{J'}=\{J'\cap J_+, J'\cap J_-\}$.''
\end{quote}
 The set ${\mathcal N}^*$ of {\em uninterrupted nodal strata} is the subset ${\mathcal N}^*\subset {\mathcal N}$ given by the elements $J\in \mathcal N$ such that
$
\mbox{\rm Cl}_{\mathcal H}(\{J\})\subset {\mathcal N}
$.

\begin{remark} We have a topology on the finite set ${\mathcal H}$, hence the ``opposite'' topology is well defined, where the ``opposite open sets'' are the closed sets.
In this setting, we see that ${\mathcal N}^*$ is the biggest closed set of $\mathcal H$ contained in ${\mathcal N}$ (the interior of $\mathcal N$ for the opposite topology).
\end{remark}

\begin{remark} Let ${\mathcal H'}\subset {\mathcal H}$ be an open set. Then ${\mathcal N}'$, given by ${\mathcal N}'={\mathcal N}\cap {\mathcal H}'$, with the same partitions as $\mathcal N$, gives a datum of nodal strata for ${\mathcal H}'$.
\end{remark}

\begin{example}  Let $({\mathcal M},{\mathcal F})$ be a nodally reduced GH-foliated space.  We define a datum of nodal strata ${\mathcal N}_{{\mathcal M},{\mathcal F}}$ in ${\mathcal H}_{\mathcal M}$ as follows. Given $J\in {\mathcal H}_{\mathcal M}$, we say that $J\in {\mathcal N}_{{\mathcal M},{\mathcal F}}$ if and only if there is a nodal corner point $p\in S_J$ (this is equivalent to say that the corner points of $S_J$ are of nodal type), see Remark \ref{rk:strata}. In this case, the foliation is given by $\eta=0$ locally at $p$, where $J=J_+\cup J_-$ and
$$
\eta=\sum_{j\in J_+}\lambda_j\frac{dx_j}{x_j}- \sum_{j\in J_-}\lambda_j\frac{dx_j}{x_j},
\quad \lambda_j\in {\mathbb R}_{>0}; \; E=(\prod_{j\in J}x_j=0).
$$
Let us note that ${\mathcal N}_{{\mathcal M},{\mathcal F}}(2)=\operatorname{Nod}_{\mathcal M}^{\mathcal F}$, where ${\mathcal N}_{{\mathcal M},{\mathcal F}}(2)= {\mathcal N}_{{\mathcal M},{\mathcal F}}\cap {\mathcal H}_{\mathcal M,\mathcal F}(2)$.
\end{example}

Let  $({\mathcal N},\{P_J\}_{J\in{\mathcal N}})$ be a datum of nodal strata in $\mathcal H$. A {\em nodal block of ${\mathcal N}$ in $\mathcal H$} is by definition any $2$-connected component of ${\mathcal N}(2)$ in ${\mathcal H}$, where ${\mathcal N}(2)={\mathcal N}\cap {\mathcal H}(2)$.
\begin{lemma}
  \label{lema:sepblocks}
  For any nodal block $\mathcal B$ of $\mathcal N$ in $\mathcal H$, the following properties are equivalent:
 \begin{equation*}
 \operatorname{a) } \mathcal B \subset {\mathcal N}^*
 .\quad\operatorname{b) } \mbox{\rm Cl}_{\mathcal H}(\mathcal B)\subset {\mathcal N}
 .\quad \operatorname{c) } \mbox{\rm Cl}_{\mathcal H}(\mathcal B)\subset {\mathcal N}^*.
 \end{equation*}
\end{lemma}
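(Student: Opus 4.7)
The plan is to derive the three equivalences directly from the definitions of $\mathcal{N}^{*}$ and $\operatorname{Cl}_{\mathcal H}$, together with the observation (recorded in the remark preceding the lemma) that $\mathcal{N}^{*}$ is the largest closed subset of $\mathcal H$ contained in $\mathcal{N}$. Neither the partition data $\{P_J\}$ nor the $2$-connectedness of the block $\mathcal B$ will play a role in this argument: the statement is purely set-theoretic, concerning the interplay between a subset and its closure in the finite topological space $\mathcal H$.

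First I would dispose of (a) $\Leftrightarrow$ (b). By the very definition of closure, $\operatorname{Cl}_{\mathcal H}(\mathcal B)=\bigcup_{J\in\mathcal B}\operatorname{Cl}_{\mathcal H}(\{J\})$, and a stratum $J$ lies in $\mathcal{N}^{*}$ exactly when $\operatorname{Cl}_{\mathcal H}(\{J\})\subset\mathcal{N}$. Hence $\mathcal B\subset\mathcal{N}^{*}$ is tautologically equivalent to $\operatorname{Cl}_{\mathcal H}(\{J\})\subset\mathcal{N}$ for every $J\in\mathcal B$, which is exactly (b).

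Next I would establish (b) $\Leftrightarrow$ (c). The implication (c) $\Rightarrow$ (b) is immediate from $\mathcal{N}^{*}\subset\mathcal{N}$. For the converse, assume (b) and fix $J'\in\operatorname{Cl}_{\mathcal H}(\mathcal B)$, so $J'\supset J$ for some $J\in\mathcal B$. To prove $J'\in\mathcal{N}^{*}$ I need $\operatorname{Cl}_{\mathcal H}(\{J'\})\subset\mathcal{N}$: for any $J''\supset J'$ one has $J''\supset J$ as well, so $J''\in\operatorname{Cl}_{\mathcal H}(\{J\})\subset\operatorname{Cl}_{\mathcal H}(\mathcal B)\subset\mathcal{N}$; this is the desired inclusion. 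Put differently, the closure operator is monotone and idempotent, so $\operatorname{Cl}_{\mathcal H}(\mathcal B)$ is closed, and a closed subset of $\mathcal{N}$ is automatically contained in $\mathcal{N}^{*}$.

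There is no genuine obstacle here; the only item worth flagging is that the lemma is really a piece of bookkeeping formalising the closedness of $\mathcal{N}^{*}$ rather than a substantive combinatorial statement about the block. The cleanest presentation is therefore to state (a) $\Leftrightarrow$ (b) as an unfolding of definitions and then upgrade from $\mathcal N$ to $\mathcal N^{*}$ using monotonicity of $\operatorname{Cl}_{\mathcal H}$. This formulation will be convenient later when we argue that the closure of a nodal separating block still consists entirely of uninterrupted nodal strata.
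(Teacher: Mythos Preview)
Your argument is correct and matches the paper's own proof in substance: both rely solely on the definition of $\mathcal N^{*}$ (giving the equivalence of (a) and (b)) and on the fact that $\mathcal N^{*}$ is closed (giving (a) or (b) $\Rightarrow$ (c)), with (c) $\Rightarrow$ (b) immediate from $\mathcal N^{*}\subset\mathcal N$. The only cosmetic difference is that the paper arranges the implications as the cycle (c) $\Rightarrow$ (b) $\Rightarrow$ (a) $\Rightarrow$ (c), whereas you pair them as (a) $\Leftrightarrow$ (b) and (b) $\Leftrightarrow$ (c).
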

\begin{proof} We see that ``c) implies b)''. Let us see that ``b) implies a)'': given $J\in {\mathcal B}$, we know that $\operatorname{Cl}_{\mathcal H}(\{J\})\subset {\mathcal N}$ and hence $J\in {\mathcal N}^*$. To see that ``a) implies c)'' is enough to note that ${\mathcal N}^*$ is closed.
\end{proof}
We
say that a nodal block $\mathcal B$ is a {\em nodal separating block} when it satisfies the equivalent properties in Lemma \ref{lema:sepblocks}. The {\em separator set ${\mathcal S}_{\mathcal H}^{\mathcal N}$ } is the union of the nodal separating blocks.
Let us note that
$$
{\mathcal S}_{\mathcal H}^{\mathcal N}={\mathcal H}(2)\cap\operatorname{Cl}_{\mathcal H}({\mathcal S}_{\mathcal H}^{\mathcal N})
\subset
\operatorname{Cl}_{\mathcal H}({\mathcal S}_{\mathcal H}^{\mathcal N})
\subset {\mathcal N}^*\subset {\mathcal N}.
$$
Let us put
$
{\mathcal H}_{{\mathcal S}_{\mathcal H}^{\mathcal N}}={\mathcal H}\setminus \operatorname{Cl}_{\mathcal H}({\mathcal S}_{\mathcal H}^{\mathcal N})
$; it
is a combinatorial strata structure, with $I$ as minimal set of indices.

Next Theorem \ref{teo:componentsconexas} gives the number of $1$-connected components of ${\mathcal H}_{{\mathcal S}_{\mathcal H}^{\mathcal N}}$ in terms of the number of nodal separating blocks.

\begin{theorem}
\label{teo:componentsconexas}
Let ${\mathcal H}$ be a simply connected combinatorial strata structure endowed with a   datum
$({\mathcal N},\{P_J\}_{J\in {\mathcal N}})$
of nodal strata and denote by $n$ the number of nodal separating blocks of $\mathcal N$ in $\mathcal H$. Then, the number of \/$1$-connected components of ${\mathcal H}_{{\mathcal S}_{\mathcal H}^{\mathcal N}}$ is equal to $n+1$.
\end{theorem}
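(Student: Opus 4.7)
The plan is to apply the long exact sequence of the pair $(\mathcal{H},\mathcal{H}')$ in simplicial $\mathbb{Z}/2$-homology, where $\mathcal{H}' := \mathcal{H}_{\mathcal{S}_\mathcal{H}^\mathcal{N}}$. Regarding $\mathcal{H}$ as an abstract simplicial complex (with $J\in\mathcal{H}(k)$ a $(k-1)$-simplex), the first point to record is that $\mathcal{H}'$ is a genuine simplicial subcomplex containing every vertex, because $\mathcal{S}_\mathcal{H}^\mathcal{N}\subset\mathcal{H}(2)$ and $\mathcal{H}'$ is closed under taking subsets. Since the number of $1$-connected components of $\mathcal{H}'$ coincides with $\dim_{\mathbb{Z}/2}H_0(\mathcal{H}';\mathbb{Z}/2)$, the problem reduces to showing this dimension equals $n+1$.

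I would first compute the relative groups. Since $C_0(\mathcal{H},\mathcal{H}')=0$, one has $H_0(\mathcal{H},\mathcal{H}')=0$; in degree $1$ one has $C_1(\mathcal{H},\mathcal{H}')=(\mathbb{Z}/2)^{\mathcal{S}_\mathcal{H}^\mathcal{N}}$, and the relative boundary sends a $3$-stratum $K$ meeting $\mathcal{S}_\mathcal{H}^\mathcal{N}$ to the sum of those edges of $K$ that lie in $\mathcal{S}_\mathcal{H}^\mathcal{N}$. The key structural claim is that this sum always has exactly two terms, both lying in a single separating block. Indeed, $\mathcal{S}_\mathcal{H}^\mathcal{N}\subset\mathcal{N}^*$ forces $K\in\mathcal{N}$; the consistency of the partitions $P_K$ and $P_e$ at an edge $e\in K\cap\mathcal{S}_\mathcal{H}^\mathcal{N}$ then forces exactly two edges of $K$ to meet both parts of $P_K$, hence exactly those two lie in $\mathcal{N}(2)$, and since they are $2$-adjacent through $K$ they belong to the same $2$-connected component of $\mathcal{N}(2)$. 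The resulting relations $\bar\partial_2(K)=e_1+e_2$ generate precisely the $2$-adjacency equivalence on $\mathcal{S}_\mathcal{H}^\mathcal{N}$, whose classes are the $n$ separating blocks; hence $H_1(\mathcal{H},\mathcal{H}';\mathbb{Z}/2)\cong(\mathbb{Z}/2)^n$.

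Combinatorial simple connectedness of $\mathcal{H}$ is exactly the triviality of $\pi_1$ of the geometric realisation of its $2$-skeleton; equivalently, at chain level, every edge-$1$-cycle decomposes into edge-loops, each of which is exhibited as a sum of boundaries of $3$-strata via the elementary-homotopy description of null-homotopy. Hence $H_1(\mathcal{H};\mathbb{Z}/2)=0$, and $1$-connectedness gives $H_0(\mathcal{H})=\mathbb{Z}/2$. The long exact sequence of the pair then collapses to
\[
0\longrightarrow (\mathbb{Z}/2)^n \longrightarrow H_0(\mathcal{H}') \longrightarrow \mathbb{Z}/2 \longrightarrow 0,
\]
whence $\dim H_0(\mathcal{H}')=n+1$, as required.

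The main obstacle will be the structural claim itself: verifying that for every $3$-stratum $K$ meeting $\mathcal{S}_\mathcal{H}^\mathcal{N}$, $K$ contains exactly two edges in $\mathcal{N}(2)$ and these belong to a common separating block. This uses crucially the hypothesis that $\mathcal{S}_\mathcal{H}^\mathcal{N}$ is assembled from \emph{uninterrupted} blocks (so $\mathcal{S}_\mathcal{H}^\mathcal{N}\subset\mathcal{N}^*$), since it is this inclusion that lets the nodal-partition data at an edge propagate to the containing $3$-stratum. By Remark \ref{rk:tresdimensionalcase} one may truncate to maximal codimension three throughout, as only strata of size $\leq 3$ enter the homological computation.
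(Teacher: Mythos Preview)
Your argument is correct and takes a genuinely different route from the paper's. The paper proceeds by a direct combinatorial parity argument: for each separating block $\mathcal{B}$ it defines the sets $A_{\mathcal{B}}(i)$, $B_{\mathcal{B}}(i)$ of vertices reachable from $\{i\}$ by paths crossing $\mathcal{B}$ an even, respectively odd, number of times; it then shows (by a case analysis on elementary homotopies) that parity is a homotopy invariant, proves that $A_{\mathcal{B}}(i)$ is actually $1$-connected in $\mathcal{H}\setminus\operatorname{Cl}_{\mathcal H}(\mathcal{B})$, and finally builds up the count inductively by adding the separating blocks one at a time, tracking a map $\Phi_m:\mathcal{H}(1)\to\{0,1\}^m$ whose image is shown to have exactly $m+1$ elements.

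Your homological approach replaces all of this by the long exact sequence of the pair $(\mathcal{H},\mathcal{H}')$ in $\mathbb{Z}/2$-coefficients. The structural claim you isolate---that every $K\in\mathcal{H}(3)$ touching $\mathcal{S}_{\mathcal H}^{\mathcal N}$ lies in $\mathcal{N}$ and has exactly two nodal edges, both in a single separating block---is precisely the content that the paper uses piecemeal in its parity lemma and in Lemma~\ref{lemma:separacion}. Your computation of $H_1(\mathcal{H},\mathcal{H}')\cong(\mathbb{Z}/2)^n$ packages the paper's inductive bookkeeping into a single cokernel calculation, and the hypothesis of simple connectedness enters transparently as $H_1(\mathcal{H};\mathbb{Z}/2)=0$. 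The trade-off is clear: the paper's argument is longer but entirely self-contained in elementary combinatorics; yours is shorter and more conceptual, explaining \emph{why} the answer is $n+1$, at the cost of invoking the simplicial homology machinery and the identification of combinatorial simple connectedness with topological simple connectedness of the realization (Proposition~\ref{prop:simplyconnected}).
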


\begin{remark} We have that Theorem \ref{teo:componentsconexas} implies Theorem \ref{teo:numerodecomponents}. To see this, note that the $1$-connected components of ${\mathcal H}_{{\mathcal S}_{\mathcal H}^{\mathcal N}}$ depend only on the sets ${\mathcal H}_{{\mathcal S}_{\mathcal H}^{\mathcal N}}(1)={\mathcal H}(1)$ and
${\mathcal H}_{{\mathcal S}_{\mathcal H}^{\mathcal N}}(2)={\mathcal H}(2)
\setminus {\mathcal S}_{\mathcal H}^{\mathcal N}
$.
\end{remark}
\subsection{Parity Results}
 \label{Parity Results}
 We start here the proof of Theorem \ref{teo:componentsconexas}. In this Subsection, we give properties concerning a nodal separating block $\mathcal B$ of $\mathcal N$ in $\mathcal H$.

\begin{lemma}
\label{lemma:separacion}
Let $\mathcal B'$ be a nodal block with  $\mathcal B'\ne \mathcal B$. Then $
\mbox{\rm Cl}_{\mathcal H}(\mathcal B)\cap \mbox{\rm Cl}_{\mathcal H}(\mathcal B')=\emptyset
$.
\end{lemma}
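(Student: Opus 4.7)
My plan is to argue by contradiction: suppose some $J \in \mbox{Cl}_{\mathcal{H}}(\mathcal{B}) \cap \mbox{Cl}_{\mathcal{H}}(\mathcal{B}')$, choose $J_0 \in \mathcal{B}$ and $J_0' \in \mathcal{B}'$ with $J_0, J_0' \subset J$, and then exhibit a $2$-path in $\mathcal{H}$ from $J_0$ to $J_0'$ whose support is contained in $\mathcal{N}(2)$. This will contradict the fact that $\mathcal{B}$ and $\mathcal{B}'$ are distinct $2$-connected components of $\mathcal{N}(2)$.

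The key structural input is the hypothesis that $\mathcal{B}$ is separating. By Lemma \ref{lema:sepblocks}, $\mbox{Cl}_{\mathcal{H}}(\mathcal{B}) \subset \mathcal{N}$, so $J \in \mathcal{N}$ and the partition $P_J = \{J_+, J_-\}$ is defined. Applying the defining axiom of a datum of nodal strata to $J_0 \subset J$ (and similarly to $J_0' \subset J$), the second part of that axiom tells me that $P_{J_0} = \{J_0 \cap J_+, J_0 \cap J_-\}$; combined with $|J_0| = 2$, this forces $J_0 = \{i_+, i_-\}$ with $i_+ \in J_+$, $i_- \in J_-$, and likewise $J_0' = \{i_+', i_-'\}$ with $i_+' \in J_+$, $i_-' \in J_-$.

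To build the desired path, in the generic case $i_+ \neq i_+'$ and $i_- \neq i_-'$ I use the intermediate stratum $K = \{i_+', i_-\}$. Since $\mathcal{H}$ is open in $\mathcal{P}(I)$, every subset of $J$ lies in $\mathcal{H}$; in particular $K \in \mathcal{H}(2)$ and the unions $J_0 \cup K = \{i_+, i_-, i_+'\}$ and $K \cup J_0' = \{i_+', i_-, i_-'\}$ lie in $\mathcal{H}(3)$. Membership $K \in \mathcal{N}(2)$ follows from the same axiom applied with $J' = K \subset J$, because $K$ meets both $J_+$ and $J_-$. The degenerate subcases $i_+ = i_+'$ or $i_- = i_-'$ are even easier: the single edge $(J_0, J_0')$ already has $J_0 \cup J_0' \subset J$, hence in $\mathcal{H}(3)$ or in $\mathcal{H}(2)$ if $J_0 = J_0'$. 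In every case $(J_0, K, J_0')$ or $(J_0, J_0')$ is a $2$-path in $\mathcal{H}$ with support in $\mathcal{N}(2)$, so $J_0$ and $J_0'$ belong to the same $2$-connected component, forcing $\mathcal{B} = \mathcal{B}'$, contradiction.

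There is no real obstacle beyond the correct bookkeeping: the whole argument relies on (i) the openness of $\mathcal{H}$, which makes all the relevant subsets of $J$ automatically strata, and (ii) the ``restriction'' clause $P_{J'} = \{J' \cap J_+, J' \cap J_-\}$ in the definition of a datum of nodal strata, which simultaneously certifies that each $2$-element subset of $J$ meeting both blocks of $P_J$ lies in $\mathcal{N}(2)$, and identifies where in the partition each index of $J_0$ or $J_0'$ must sit.
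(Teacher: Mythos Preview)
Your proof is correct and follows essentially the same approach as the paper's: assume a common element in the closures, extract the two $2$-strata $J_0\in\mathcal B$ and $J_0'\in\mathcal B'$ contained in it, use the separating property of $\mathcal B$ to know this element lies in $\mathcal N$, and then use the induced partition to manufacture a $2$-path in $\mathcal N(2)$ from $J_0$ to $J_0'$. The only cosmetic difference is that the paper works with the partition of $K=J_0\cup J_0'$ (and splits cases according to $|J_0\cap J_0'|$) whereas you work directly with the partition of the larger element $J$; the intermediate stratum and the case analysis are otherwise identical.
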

\begin{proof}  Take $J_0\in \mbox{\rm Cl}_{\mathcal H}(\mathcal B)\cap
\mbox{\rm Cl}_{\mathcal H}(\mathcal B')$ to find a contradiction.
There are $J\in {\mathcal B}$ and $ J'\in {\mathcal B'}$ such that $J, J'\subset J_0$. Note that $J\ne J'$ since $\mathcal B\cap \mathcal B'=\emptyset$. Put $K=J\cup J'$. We have that $K\in {\mathcal H}$, since $K\subset J_0$. Moreover $K\in \mbox{\rm Cl}_{\mathcal H}(\{J\})$ and since $J\in {\mathcal N}^*$, we deduce that $K\in {\mathcal N}$.

If $J\cap J'\ne \emptyset$, then $K\in {\mathcal H}(3)$.  Thus, we have a $2$-path
$
(J,K,J')
$
that joints $J$ with $J'$ with support in $\mathcal N$. This is not possible.

Suppose that  $J\cap J'=\emptyset$ and put $J=\{i_1,i_2\}$, $J'=\{i'_1,i'_2\}$. We have that
$K\in {\mathcal H}(4)$.
Assume up to a reordering that
$i_1,i'_1\in K_{+}$ and $i_2,i'_2\in K_{-}$. Then $\{i_1,i'_2\}\in {\mathcal N}$ and we have a $2$-path
$
(J,\{i_1,i'_2\}, J')
$
joining $J$ and $J'$, with support in $\mathcal N$. This is not possible.
\end{proof}
 Let $\gamma$ be a $1$-path in $\mathcal H$.  We denote by
$\kappa_{\mathcal B}(\gamma)$ the number of entries in $\operatorname{Sub}(\gamma)$ belonging to ${\mathcal B}$. That is $\kappa_{\mathcal B}(\gamma)$ denotes the number of ``times'' that $\gamma$ ``crosses through $\mathcal B$''.
 Given
$i\in I$, let $A_{\mathcal B}(i)$, respectively $B_{\mathcal B}(i)$, be the subset of ${\mathcal H}(1)$ whose elements are the strata $\{j\}\in {\mathcal H}(1)$ satisfying that there is a $1$-path $\gamma$ joining $\{i\}$ and $\{j\}$ in $\mathcal H$ with even $\kappa_{\mathcal B}(\gamma)$, respectively odd $\kappa_{\mathcal B}(\gamma)$. Since ${\mathcal H}(1)$ is $1$-connected in $\mathcal H$, we have that
$
{\mathcal H}(1)=A_{\mathcal B}(i)\cup B_{\mathcal B}(i)$,  for any $i\in I
$.
Moreover, by reversing and composing $1$-paths, we see that
$$
\begin{array}{ccccc}
A_{\mathcal B}(i')=A_{\mathcal B}(i)&\mbox{ and }& B_{\mathcal B}(i')=B_{\mathcal B}(i),&\mbox{ for any } \{i'\}\in A_{\mathcal B}(i). \\
A_{\mathcal B}(i')=B_{\mathcal B}(i)&\mbox{ and }& B_{\mathcal B}(i')=A_{\mathcal B}(i),&\mbox{ for any } \{i'\}\in B_{\mathcal B}(i).
\end{array}
$$
Note that $A_{\mathcal B}(i)\ne\emptyset$ since $\{i\}\in A_{\mathcal B}(i)$.  We also have that $B_{\mathcal B}(i)\ne\emptyset$, indeed, we can choose $K=\{j',j''\}\in \mathcal B$ and we either have that $\{j'\}\in B_{\mathcal B}(i)$ or $\{j''\}\in B_{\mathcal B}(i)$.

\begin{lemma} Let $\gamma$ and $\gamma'$ be two $1$-paths in $\mathcal H$ joining  $\{i\}$ and $\{j\}$. Then, the number  $\kappa_{\mathcal B}(\gamma)-\kappa_{\mathcal B}(\gamma')$ is even. In particular
$A_{\mathcal B}(i)\cap B_{\mathcal B}(i)=\emptyset$, for any $i\in I$.
\end{lemma}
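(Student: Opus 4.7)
The plan is to exploit the simple connectedness of $\mathcal H$ to reduce the statement to a local parity computation on triangles. Since $\gamma$ and $\gamma'$ both join $\{i\}$ to $\{j\}$ and ${\mathcal H}(1)$ is simply connected in ${\mathcal H}$, there is a finite chain of elementary homotopies
$\gamma = \gamma_0 \rightsquigarrow \gamma_1 \rightsquigarrow \cdots \rightsquigarrow \gamma_N = \gamma'$
with supports in ${\mathcal H}(1)$. Because $\kappa_{\mathcal B}$ is additive under the composition $\gamma_r = \delta * \epsilon * \rho$, it suffices to prove that for every elementary homotopic pair $(\epsilon,\epsilon')$, the difference $\kappa_{\mathcal B}(\epsilon)-\kappa_{\mathcal B}(\epsilon')$ is even.

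Of the three allowed forms of elementary pairs, the trivial pair contributes $0$, and the back-and-forth pair $\epsilon = (\{i_1\},\{i_2\},\{i_1\})$ versus $\epsilon' = (\{i_1\})$ contributes $2$ if $\{i_1,i_2\}\in{\mathcal B}$ and $0$ otherwise; both are manifestly even. The decisive case is the triangle pair $\epsilon=(\{i_1\},\{i_2\})$ and $\epsilon'=(\{i_1\},\{i_3\},\{i_2\})$, in which $J:=\{i_1,i_2,i_3\}\in{\mathcal H}(3)$. Modulo $2$, the difference $\kappa_{\mathcal B}(\epsilon)-\kappa_{\mathcal B}(\epsilon')$ equals the number $N(J)$ of $2$-element subsets of $J$ that belong to $\mathcal B$, so the proof reduces to showing $N(J)\equiv 0\pmod 2$ for every $J\in{\mathcal H}(3)$.

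For this I would split on whether $J\in{\mathcal N}$. If $J\notin{\mathcal N}$, then no $2$-element subset $e\subset J$ can lie in ${\mathcal N}^*$, since $J\in\operatorname{Cl}_{\mathcal H}(\{e\})$ would force $J\in{\mathcal N}$; hence no such $e$ belongs to ${\mathcal B}\subset{\mathcal N}^*$, and $N(J)=0$. If $J\in{\mathcal N}$, then since $|J|=3$ and both members of $P_J=\{J_+,J_-\}$ are nonempty, one of $J_\pm$ is a singleton. The restriction axiom for the datum of nodal strata then forces exactly the two $2$-element subsets of $J$ that contain that singleton element to lie in ${\mathcal N}$. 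These two edges form a $2$-path in $\mathcal H$ with subsupport $(J)$ and support contained in ${\mathcal N}(2)$; hence they are $2$-connected in ${\mathcal N}(2)$ and therefore lie in the same nodal block. So either both belong to ${\mathcal B}$ (giving $N(J)=2$) or neither does (giving $N(J)=0$), and $N(J)$ is even in either case.

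The main obstacle is precisely the triangle case with $J\in{\mathcal N}$: the parity works because the restriction axiom produces an \emph{even} number of nodal edges on $J$, and these edges are then forced into a common nodal block via the $2$-path through $J$. Once the parity statement is established, the consequence $A_{\mathcal B}(i)\cap B_{\mathcal B}(i)=\emptyset$ follows immediately by contradiction: a common element $\{j\}$ would yield two $1$-paths from $\{i\}$ to $\{j\}$ whose values of $\kappa_{\mathcal B}$ have opposite parities, contradicting the parity result just proved.
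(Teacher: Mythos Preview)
Your proof is correct. Both you and the paper reduce to elementary homotopies via simple connectedness and handle the trivial and back-and-forth pairs identically; the difference lies in the triangle case. The paper case-splits on the membership of the specific edge $\{i_1,i_2\}$ (in $\mathcal B$, in $\mathcal N\setminus\mathcal B$, or outside $\mathcal N$) and in the middle case invokes Lemma~\ref{lemma:separacion} to conclude that $K\notin\operatorname{Cl}_{\mathcal H}(\mathcal B)$, whence the other two edges avoid $\mathcal B$. You instead recast the parity as the total count $N(J)$ of edges of $J$ lying in $\mathcal B$ and split on whether the triangle $J$ itself lies in $\mathcal N$: when $J\notin\mathcal N$ the inclusion $\mathcal B\subset\mathcal N^*$ (Lemma~\ref{lema:sepblocks}) gives $N(J)=0$ directly, and when $J\in\mathcal N$ the partition axiom forces exactly two nodal edges, which are $2$-connected through $J$ and hence lie in the same nodal block. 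Your organization is more symmetric in the three edges and bypasses Lemma~\ref{lemma:separacion} entirely; the paper's three-way split is slightly longer but makes explicit where the separation of distinct nodal blocks is used.
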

\begin{proof} Since ${\mathcal H}$ is simply connected, there is a finite sequence
of elementary homotopies that transforms $\gamma$ in $\gamma'$.
It is enough to consider the case when $\gamma\rightsquigarrow\gamma'$ is an elementary homotopy. That is, we can assume that
$$
\gamma=\delta*\epsilon*\rho; \quad  \gamma'=\delta*\epsilon'*\rho,
$$
where $(\epsilon,\epsilon')$ is an elementary homotopic pair. Since
$\kappa_{\mathcal B}(\gamma)=\kappa_{\mathcal B}(\delta)+\kappa_{\mathcal B}(\epsilon)+\kappa_{\mathcal B}(\rho)$,
it is enough to show that $\kappa_{\mathcal B}(\epsilon)-\kappa_{\mathcal B}(\epsilon')$ is an even number. If $\epsilon=\epsilon'$ we are done. Otherwise, we have two possibilities:

{\em Case: $
\epsilon=(\{i_1\})$, $\epsilon'=(\{i_1\},\{i_2\},\{i_1\})
$}. Note that $\kappa_{\mathcal B}(\epsilon)=0$. Moreover, we have $ \kappa_{\mathcal B}(\epsilon')=0$, if $\{i_1,i_2\}\notin {\mathcal B}$ and $ \kappa_{\mathcal B}(\epsilon')=2$, if $\{i_1,i_2\}\in {\mathcal B}$. Hence $\kappa_{\mathcal B}(\epsilon)-k_{\mathcal B}(\epsilon')$ is an even number.

{Case: $\epsilon=(\{i_1\},\{i_2\})$ and $\epsilon'=(\{i_1\},\{i_3\},\{i_2\})$, where $
   (\{i_1,i_3\},\{i_3,i_2\})
   $
is a $2$-path in $\mathcal H$.}
Put $K=\{i_1,i_2,i_3\}\in {\mathcal H}(3)$ and consider the following possibilities:

$\bullet$ $\{i_1,i_2\}\in{\mathcal B}$.  In this case  $\kappa_{\mathcal B}(\epsilon)=1$.
We have that $K\in \operatorname{Cl}_{\mathcal H}({\mathcal B})$ and hence $K\in \mathcal N$. Noting that $i_1,i_2$ have not the same ``sign'', up to symmetry, we  assume $K_+=\{i_1,i_3\}$ and $K_-=\{i_2\}$. We get that $\{i_2,i_3\}\in {\mathcal N}$ and $\{i_1,i_3\}\notin {\mathcal N}$; moreover $\{i_2,i_3\}$ is $2$-connected with $\{i_1,i_2\}$, then $\{i_2,i_3\}\in {\mathcal B}$. We deduce that $\kappa_{\mathcal B}(\epsilon')=1$.

$\bullet$
$\{i_1,i_2\}\in{\mathcal N}\setminus {\mathcal B}$. In this case $\kappa_{\mathcal B}(\epsilon)=0$. Then $K\in \mbox{\rm Cl}_{\mathcal H}(\{i_1,i_2\})$ and by Lemma \ref{lemma:separacion} we have that $K\notin \mbox{\rm Cl}_{\mathcal H}({\mathcal B})$. Since
$$
K\in \mbox{\rm Cl}_{\mathcal H}(\{i_1,i_3\})\cap  \mbox{\rm Cl}_{\mathcal H}(\{i_2,i_3\}),
$$
we deduce that $\{i_1,i_3\}\notin {\mathcal B}$ and $\{i_2,i_3\}\notin {\mathcal B}$. Thus $\kappa_{\mathcal B}(\epsilon')=0$.

$\bullet$ $\{i_1,i_2\}\notin{\mathcal N}$. In this case $\kappa_{\mathcal B}(\epsilon)=0$.  If $\kappa_{\mathcal B}(\epsilon')=0$, we are done. Assume $\kappa_{\mathcal B}(\epsilon')\geq 1$ and that  $\{i_1,i_3\}\in {\mathcal B}$. Then
$K\in \operatorname{Cl}_{\mathcal H}{\mathcal B}\subset \mathcal N$. Moreover $i_1,i_2$ have the same ``sign'' in $K$, thus  $K_{+}=\{i_1,i_2\}$ and $K_-=\{i_3\}$, this implies that $\{i_2,i_3\}\in {\mathcal N}$. Then $\{i_2,i_3\}\in {\mathcal B}$ since it is 2-linked with $\{i_1,i_3\}$. Thus $\kappa_{\mathcal B}(\epsilon')=2$.
\end{proof}
\begin{remark} Take $i,j\in I$. We have that $\{j\}\in A_{\mathcal B}(i)$ if and only if $A_{\mathcal B}(j)= A_{\mathcal B}(i)$. In the same way, we have that $\{j\}\in B_{\mathcal B}(i)$ if and only if $A_{\mathcal B}(j)= B_{\mathcal B}(i)$.
\end{remark}

\begin{lemma}
\label{lema:conexiondeA} Consider two distinct indices $i,j\in I$ such that $A_{\mathcal B}(i)=A_{\mathcal B}(j)$.
Assume that ${\mathcal W}_{ij}\ne \emptyset$, where
${\mathcal W}_{ij}$ is the set of $2$-paths $\omega=(K_0,K_1,\ldots,K_u)$ in $\mathcal H$, with support in ${\mathcal B}$
such that $i\in K_0$ and  $j\in K_{u}$. Then, there is a $1$-path $\beta$ in $\mathcal H$ joining $\{i\}$ and $\{j\}$ such that $J\notin {\mathcal N}$ for any entry $J$ in $\operatorname{Sub}(\gamma)$.
In particular, we have that $\kappa_{\mathcal B}(\beta)=0$.
\end{lemma}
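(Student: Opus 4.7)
The plan is to build $\beta$ by walking through singletons that stay on one fixed ``side'' of the partitions induced along the 2-path $\omega$. First, fix an $\omega=(K_0,K_1,\ldots,K_u)\in {\mathcal W}_{ij}$. Since each $K_r\in {\mathcal B}\subset {\mathcal N}^*$ and $K_r\cup K_{r+1}\in \operatorname{Cl}_{\mathcal H}(\{K_r\})$, I will observe that $K_r\cup K_{r+1}\in {\mathcal N}(3)$ carries a partition $P_{K_r\cup K_{r+1}}=\{(K_r\cup K_{r+1})_+,(K_r\cup K_{r+1})_-\}$ whose restriction to $K_r$ and to $K_{r+1}$ coincides with $P_{K_r}$ and $P_{K_{r+1}}$; this is exactly the compatibility clause in the definition of a datum of nodal strata.

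By the symmetry of the ``$+$'' and ``$-$'' labels, I may assume $i\in (K_0)_+$. Then I define the sequence $i_0,i_1,\ldots,i_u$ inductively by $i_0=i$ and by taking $i_{r+1}$ to be the unique element of $K_{r+1}$ that lies in $(K_r\cup K_{r+1})_+$; existence and uniqueness are immediate from $|K_{r+1}|=2$ together with the fact that both sides of the partition on $K_r\cup K_{r+1}$ are nonempty. The 1-path $\beta$ is then obtained from $(\{i_0\},\{i_1\},\ldots,\{i_u\})$ by suppressing consecutive repetitions, which occur precisely when the shared element $K_r\cap K_{r+1}$ lies on the ``$+$'' side, forcing $i_{r+1}=i_r$.

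Every nontrivial subsupport entry of $\beta$ has the form $\{i_r,i_{r+1}\}\subset (K_r\cup K_{r+1})_+$, hence fails to meet $(K_r\cup K_{r+1})_-$; by the defining property of a nodal datum applied to $J=K_r\cup K_{r+1}$ and $J'=\{i_r,i_{r+1}\}$, we conclude $\{i_r,i_{r+1}\}\notin {\mathcal N}$. In particular $\kappa_{\mathcal B}(\beta)=0$ since ${\mathcal B}\subset {\mathcal N}$.

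It remains to verify that $\beta$ joins $\{i\}$ to $\{j\}$. We know $\beta$ ends at $\{i_u\}$ with $i_u\in (K_u)_+$. Either $j=i_u$, in which case we are done, or $j\in (K_u)_-$; in the latter case, concatenating the step $\{i_u\}\to\{j\}$ produces a 1-path from $\{i\}$ to $\{j\}$ whose only new subsupport entry is $K_u\in {\mathcal B}$, giving $\kappa_{\mathcal B}=1$ and so $\{j\}\in B_{\mathcal B}(i)$, contradicting $A_{\mathcal B}(i)=A_{\mathcal B}(j)$. The subtle point of the whole argument is the internal consistency of sign propagation across the entire chain of partitions; once the compatibility of the $P_J$'s is exploited to transport the ``$+$'' label from $K_0$ to $K_u$, the remaining verifications are bookkeeping.
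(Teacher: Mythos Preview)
Your proof is correct and takes a somewhat different route from the paper's. The paper argues by induction on the minimal length $\ell_{ij}$ of an element of $\mathcal W_{ij}$: at each step it writes $K_0=\{i,j'\}$, $K_1=\{j',j''\}$ (using minimality to force $i\notin K_1$), observes that $A_{\mathcal B}(i)=A_{\mathcal B}(j'')$ via the shared neighbor $j'$, and then shows $\{i,j''\}\notin\mathcal N$ by the parity contradiction (if $\{i,j''\}\in\mathcal N$ it would be $2$-connected to $K_0$ through $K_{01}$, hence in $\mathcal B$, forcing $\{j''\}\in B_{\mathcal B}(i)$); the induction hypothesis is then applied to $(K_1,\ldots,K_u)\in\mathcal W_{j''j}$. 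Your argument instead constructs the whole $1$-path in a single pass by transporting the sign of the nodal datum along $K_0\cup K_1, K_1\cup K_2,\ldots$, and reads off $\{i_r,i_{r+1}\}\notin\mathcal N$ directly from the defining clause ``$J'\in\mathcal N$ iff $J'\cap J_+\ne\emptyset\ne J'\cap J_-$'' applied to $J=K_r\cup K_{r+1}$. The two constructions yield the same path, but your justification sidesteps the auxiliary implication $J'\in\mathcal N\Rightarrow J'\in\mathcal B$ and works for any $\omega\in\mathcal W_{ij}$, not just one of minimal length. One small point worth making explicit: the partition $P_{K_r\cup K_{r+1}}$ is an \emph{unordered} pair, so ``$(K_r\cup K_{r+1})_+$'' is only well-defined once you declare it to be the part containing $i_r$; with that convention stated, the sign propagation is unambiguous and the rest of your argument goes through as written.
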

\begin{proof} We proceed by induction on the minimal length $\ell_{ij}$ of an element $\omega$ of $\mathcal W_{ij}$. If $\ell_{ij}=0$, there is $K=\{i,j\}\in \mathcal B$ and this contradicts the fact that $A_{\mathcal B}(i)=A_{\mathcal B}(j)$.  Thus, the induction starts at  $\ell_{ij}=1$. In this case, there is a $2$-path $(K_0, K_1)$ with $K_0,K_1\in \mathcal B$ such that $i\in K_0$, $j\in K_1$ and $K_{01}=K_0\cup K_1\in {\mathcal H}(3)$.
Take $J=\{i,j\}$. Since $J\subset K_{01}$, we have that $J\in {\mathcal H}(2)$. Moreover $J\notin {\mathcal N}$, otherwise $j\in B_{\mathcal B}(i)$. Then, we can take the $1$-path $\beta=(\{i\},\{j\})$.

Let us prove the inductive step. Take $\omega=(K_0,K_1,\ldots,K_u)\in \mathcal W_{ij}$ of  length $u=\ell_{ij}>1$. We can write
$$
K_0=\{i,j'\},\;  K_1=\{j',j''\},\; K_{01}=K_0\cap K_1=\{i,j',j''\}.
$$
Note that $\{i,j\}\cap K_1=\emptyset$, by the minimality of $u$ and te fact that $u>1$.
Moreover, since $\{j'\}\in B_{\mathcal B}(i)\cap  B_{\mathcal B}(j'')$, then $B_{\mathcal B}(i)=B_{\mathcal B}(j'')$ and $A_{\mathcal B}(i)=A_{\mathcal B}({j''})$. By the same argument as before, we have that $J'=\{i,j''\}\in {\mathcal H}\setminus {\mathcal N}$ and thus $\kappa_{\mathcal B}(\beta')=0$, where $\beta'=(\{i\},\{j''\})$. On the other hand, the path $(K_1,K_2,\ldots,K_u)$ belongs to ${\mathcal W}_{j''j}$. By induction hypothesis, there is a path $\beta''$ joining $j''$ and $j$ such that $\kappa_{\mathcal B}(\beta'')=0$. We end by taking $\beta=\beta'*\beta''$.
\end{proof}

\begin{proposition}
\label{prop:conexiondeA}
 For any $\{j\}\in A_{\mathcal B}(i)$, there is a $1$-path $\gamma$ in ${\mathcal H}$ joining $\{i\}$ and $\{j\}$ such that $\kappa_{\mathcal B}(\gamma)=0$.
\end{proposition}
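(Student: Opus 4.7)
I would prove this by induction on $k$, where $2k=\kappa_{\mathcal B}(\gamma_0)$ for a $1$-path $\gamma_0$ from $\{i\}$ to $\{j\}$. Such a $\gamma_0$ exists because $\{j\}\in A_{\mathcal B}(i)$, and its crossing count is automatically even by the parity lemma already established. The base case $k=0$ is exactly the desired conclusion.

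For the inductive step, suppose $\kappa_{\mathcal B}(\gamma_0)=2k\ge 2$. I would isolate the first two crossings of $\gamma_0$ through $\mathcal B$, writing
\[
\gamma_0 \;=\; \delta_1 * (\{p_1\},\{q_1\}) * \eta * (\{p_2\},\{q_2\}) * \delta_2,
\]
where $K_1:=\{p_1,q_1\}$ and $K_2:=\{p_2,q_2\}$ belong to $\mathcal B$, both $\delta_1$ and $\eta$ carry no $\mathcal B$-edges, and $\delta_2$ contributes the remaining $2k-2$ crossings.

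The central step is to apply Lemma \ref{lema:conexiondeA} to the pair $(p_1,q_2)$. The hypothesis $A_{\mathcal B}(p_1)=A_{\mathcal B}(q_2)$ holds because the portion of $\gamma_0$ linking $\{p_1\}$ to $\{q_2\}$ has exactly two $\mathcal B$-crossings, which is even. The hypothesis $\mathcal W_{p_1q_2}\ne\emptyset$ holds because $K_1,K_2\in\mathcal B$ and every nodal block is $2$-connected in $\mathcal H$ by definition, hence there is a $2$-path with support in $\mathcal B$ from $K_1$ to $K_2$; this path qualifies as an element of $\mathcal W_{p_1q_2}$ since $p_1\in K_1$ and $q_2\in K_2$. (In the degenerate case $p_1=q_2$, I would simply replace the use of the lemma by the trivial $1$-path $(\{p_1\})$.) The lemma then yields a $1$-path $\beta$ from $\{p_1\}$ to $\{q_2\}$ with $\kappa_{\mathcal B}(\beta)=0$.

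Splicing, the path $\gamma_0':=\delta_1 * \beta * \delta_2$ joins $\{i\}$ to $\{j\}$ and satisfies $\kappa_{\mathcal B}(\gamma_0')=2k-2$, closing the induction. Endpoint matching in the splice is immediate because $\delta_1$ ends at $\{p_1\}$ and $\delta_2$ begins at $\{q_2\}$. The only nonroutine point is the verification of the two hypotheses of Lemma \ref{lema:conexiondeA}; the parity hypothesis is secured by choosing \emph{consecutive} crossings (so that $\eta$ is $\mathcal B$-free), and the $\mathcal W$-hypothesis is secured by the $2$-connectedness intrinsic to nodal blocks. Once this pair of observations is in place, the inductive reduction $2k\mapsto 2k-2$ is mechanical.
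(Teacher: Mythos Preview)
Your proposal is correct and follows essentially the same approach as the paper's own proof: induction on the (even) number $\kappa_{\mathcal B}$ of crossings, isolation of two consecutive $\mathcal B$-crossings, invocation of Lemma~\ref{lema:conexiondeA} via the $2$-connectedness of $\mathcal B$ (with the degenerate coincident-endpoint case handled separately), and splicing to drop the crossing count by two. The only cosmetic difference is that you explicitly take the \emph{first} two crossings, whereas the paper extracts any subpath $\alpha$ carrying exactly two $\mathcal B$-edges at its ends.
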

\begin{proof} We know that there is a $1$-path $\sigma$ in $\mathcal H$ joining $\{i\}$ and $\{j\}$, such that $\kappa_{\mathcal B}(\sigma)$ is an even number.
Let us proceed by induction on $\kappa_{\mathcal B}(\sigma)$.
 If $\kappa_{\mathcal B}(\sigma)=0$, we are done. If $k_{\mathcal B}(\sigma)\geq 2$,  we can decompose
$
\sigma=\delta*\alpha*\rho
$,
where $\alpha$ is a $1$-path in ${\mathcal H}$ o of length $s+2\geq 2$ of the form
$$
\alpha: (\{i'\},\{i_1\},\{i_2\},\ldots,\{i_{s+1}\},\{i''\} ),
$$
satisfying that $K',K''\in {\mathcal B}$ and $T_\ell\notin{\mathcal B}$, for $\ell=1,2,\ldots,s$,  where $K'=\{i',i_1\}$, $K''=\{i_{s+1},i''\}$ and $T_\ell=\{i_\ell,i_{\ell+1}\}$.
Note that $\kappa_{\mathcal B}(\alpha)=2$ and hence $\{i''\}\in A_{\mathcal B}(i')$.
Now, it is enough to show that there is a $1$-path $\beta$ in $\mathcal H$ joining $\{i'\}$ and $\{i''\}$ such that $\kappa_{\mathcal B}(\beta)=0$. Indeed,
 the $1$-path
$\sigma_1= \delta*\beta*\rho
$
joins $\{i\}$ and $\{j\}$ and $k_{\mathcal B}(\sigma_1)=k_{\mathcal B}(\sigma)-2$.

Let us show the existence of $\beta$. If $i'=i''$, we take $\beta=(\{i'\})$ and we are done.
Assume that $i'\ne i''$. Note that $K\ne K'$, since if $K=K'$  the $1$-path $(\{i'\},\{i''\})$  contradicts the fact that $\{i''\}\in A_{\mathcal B}(i')$.
Since ${\mathcal B}$ is a $2$-connected component of $\mathcal N(2)$ in $\mathcal H$,  there is a $2$-path joining $K'$ and $K''$ with support in ${\mathcal B}$. Then ${\mathcal W}_{i'i''}$ is not empty and we apply Lemma \ref{lema:conexiondeA}.
\end{proof}

\begin{corollary}
\label{cor:unacomponente} For any index $i\in I$, the sets $A_{\mathcal B}(i)$ and $B_{\mathcal B}(i)$ are the $1$-connected components of
${\mathcal H}_{\mathcal B}={\mathcal H}\setminus \mbox{\rm Cl}_{\mathcal H}({\mathcal B})$.
\end{corollary}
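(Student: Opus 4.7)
The plan is to interpret $\mathcal H_{\mathcal B}$-connectedness via the parity function $\kappa_{\mathcal B}$ and then read the result off Proposition \ref{prop:conexiondeA} combined with the partition $\mathcal H(1)=A_{\mathcal B}(i)\sqcup B_{\mathcal B}(i)$. First I would note that $\operatorname{Cl}_{\mathcal H}(\mathcal B)\cap\mathcal H(1)=\emptyset$ and $\operatorname{Cl}_{\mathcal H}(\mathcal B)\cap\mathcal H(2)=\mathcal B$, since $\operatorname{Cl}_{\mathcal H}(\{J\})=\{J'\supset J\}$ and every $J\in\mathcal B$ has two elements. Consequently $\mathcal H_{\mathcal B}(1)=\mathcal H(1)$ and $\mathcal H_{\mathcal B}(2)=\mathcal H(2)\setminus\mathcal B$, so a $1$-path $\gamma$ of $\mathcal H$ is a $1$-path of $\mathcal H_{\mathcal B}$ precisely when $\kappa_{\mathcal B}(\gamma)=0$.

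The main step is to show that $A_{\mathcal B}(i)$ is $1$-connected in $\mathcal H_{\mathcal B}$. For any $\{j\}\in A_{\mathcal B}(i)$, Proposition \ref{prop:conexiondeA} yields a $1$-path $\gamma=(\{i_0\},\{i_1\},\ldots,\{i_r\})$ from $\{i\}$ to $\{j\}$ in $\mathcal H$ with $\kappa_{\mathcal B}(\gamma)=0$; since every initial truncation $(\{i_0\},\ldots,\{i_k\})$ inherits $\kappa_{\mathcal B}=0$, each vertex $\{i_k\}$ lies in $A_{\mathcal B}(i)$. Hence $\gamma$ is a $1$-path of $\mathcal H_{\mathcal B}$ with support contained in $A_{\mathcal B}(i)$, and joining arbitrary elements of $A_{\mathcal B}(i)$ through $\{i\}$ gives $1$-connectedness of $A_{\mathcal B}(i)$ in $\mathcal H_{\mathcal B}$. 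The analogous statement for $B_{\mathcal B}(i)$ follows by choosing any $\{i'\}\in B_{\mathcal B}(i)$ (nonempty, as observed after the definition of $A_{\mathcal B}(i),B_{\mathcal B}(i)$) and using the identity $B_{\mathcal B}(i)=A_{\mathcal B}(i')$ to repeat the same argument.

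Next I would rule out any $\mathcal H_{\mathcal B}$-path between $A_{\mathcal B}(i)$ and $B_{\mathcal B}(i)$. If $\sigma$ were a $1$-path in $\mathcal H_{\mathcal B}$ from $\{j\}\in A_{\mathcal B}(i)$ to $\{k\}\in B_{\mathcal B}(i)$, then $\kappa_{\mathcal B}(\sigma)=0$; concatenating with a path $\alpha$ from $\{i\}$ to $\{j\}$ provided by Proposition \ref{prop:conexiondeA}, also with $\kappa_{\mathcal B}(\alpha)=0$, would produce a path $\alpha\ast\sigma$ from $\{i\}$ to $\{k\}$ having even (indeed zero) $\kappa_{\mathcal B}$. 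This would force $\{k\}\in A_{\mathcal B}(i)$, contradicting $A_{\mathcal B}(i)\cap B_{\mathcal B}(i)=\emptyset$. Combined with the partition $\mathcal H_{\mathcal B}(1)=\mathcal H(1)=A_{\mathcal B}(i)\sqcup B_{\mathcal B}(i)$ and the nonemptiness of both pieces, this identifies $A_{\mathcal B}(i)$ and $B_{\mathcal B}(i)$ as the two $1$-connected components of $\mathcal H_{\mathcal B}$.

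I do not expect a serious obstacle here: all of the combinatorial content has already been packaged into Proposition \ref{prop:conexiondeA} and the parity analysis preceding it. The only point requiring care is verifying that a $\kappa_{\mathcal B}=0$ path keeps its entire support on one ``side'' of $\mathcal B$, which follows immediately from additivity of $\kappa_{\mathcal B}$ under concatenation and hence under truncation.
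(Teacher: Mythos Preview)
Your proof is correct and follows essentially the same approach as the paper: use Proposition~\ref{prop:conexiondeA} to get $1$-connectedness of $A_{\mathcal B}(i)$ in $\mathcal H_{\mathcal B}$, invoke $B_{\mathcal B}(i)=A_{\mathcal B}(i')$ for the other side, and separate the two pieces via the parity constraint. Your version is slightly more explicit than the paper's in verifying that the $\kappa_{\mathcal B}=0$ path actually has support in $A_{\mathcal B}(i)$ (via truncation), a point the paper leaves implicit.
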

\begin{proof} By Proposition \ref{prop:conexiondeA}  we deduce that $A_{\mathcal B}(i)$ is $1$-connected in ${\mathcal H}_{\mathcal B}$. Moreover, there is no $\{j\}\in B_{\mathcal B}(i)$ linked with an $\{i'\}\in A_{\mathcal B}(i)$ by a $1$-path in ${\mathcal H}_{\mathcal B}$, since that $1$-path should have at least one element in ${\mathcal B}$. Then $A_{\mathcal B}(i)$ is a $1$-connected component of ${\mathcal H}(1)$ in ${\mathcal H}_{{\mathcal B}}$. Recall that $B_{\mathcal B}(i)\ne\emptyset$. Taking $\{j\}\in B_{\mathcal B}(i)$, we know that $B_{\mathcal B}(i)=A_{\mathcal B}(j)$, hence $B_{\mathcal B}(i)$ is the second $1$-connected component of ${\mathcal H}_{\mathcal B}$.
\end{proof}
\begin{remark}
 \label{rk:unirayb}
If $\{j\}\in B_{\mathcal B}(i)$, there is a $1$-path $\gamma$ in ${\mathcal H}$ joining $\{i\}$ and $\{j\}$ such that $k_{\mathcal B}(\gamma)=1$. This is a direct consequence of Proposition \ref{prop:conexiondeA}. Take any $J=\{i_1,i_2\}\in {\mathcal B}$ and assume that $\{i_1\}\in A_{\mathcal B}(i)$, then $\{i_2\}\in B_{\mathcal B}(i)=A_{\mathcal B}$(j). Thus, we can join $\{i\}$ and $\{i_1\}$ with a $1$-path $\pi$ whith $k_{\mathcal B}(\pi)=0$ and we can also join $\{i_2\}$ and $\{j\}$ with a $1$-path $\rho$ whith $k_{\mathcal B}(\rho)=0$. Taking
$
\gamma=\pi*(\{i_1\},\{i_2\})*\rho,
$
we have that $k_{\mathcal B}(\gamma)=1$.
\end{remark}

\subsection{Number of Connected Components}  In this subsection, we complete the proof of Theorem \ref{teo:componentsconexas}. Let us fix an index $i_0\in I$ and an ordering
$
{\mathcal B}_1,{\mathcal B}_2,\ldots,{\mathcal B}_n,
$
in the list of nodal separating blocks of $\mathcal N$ in $\mathcal H$. Given $m\in \{0,1,\ldots,n\}$, we denote
$$
{\mathcal S}_m={\mathcal B}_1\cup{\mathcal B}_2\cup\cdots\cup{\mathcal B}_m,\quad
{\mathcal H}_m={\mathcal H}\setminus \operatorname{Cl}_{\mathcal H}({\mathcal S}_m).
$$
Note that ${\mathcal S}_n={\mathcal S}_{\mathcal H}^{\mathcal N}$ and ${\mathcal H}_n={\mathcal H}_{{\mathcal S}_{\mathcal H}}^{\mathcal N}$.
We also define
$
\Phi_m:{\mathcal H}(1)\rightarrow \{0,1\}^m
$
by
$$
\Phi_m(\{i\})(\ell)=\left\{
\begin{array}{ccc}
0&\mbox{ if }& \{i\}\in A_{{\mathcal B}_\ell}(i_0)\\
1&\mbox{ if }& \{i\}\in B_{{\mathcal B}_\ell}(i_0)
\end{array}
\right.
\quad, \mbox{ for } 1\leq \ell\leq m.
$$
Next Proposition \ref{prop:componentesconexas} implies Theorem \ref{teo:componentsconexas}:
\begin{proposition}
\label{prop:componentesconexas} For any $0\leq m\leq n$,  the image $\Delta_m$ of $\Phi_m$   has $m+1$ elements. Moreover, the $1$-connected components of ${\mathcal H}_m$ are the sets $\Phi_m^{-1}(c)$, where $c\in \Delta_m$.
\end{proposition}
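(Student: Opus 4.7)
The plan is to proceed by induction on $m$. For $m = 0$ the map $\Phi_0$ takes values in the singleton $\{0,1\}^0$, so $\Delta_0$ has one element, and ${\mathcal H}_0 = {\mathcal H}$ is $1$-connected by hypothesis, so the unique fiber ${\mathcal H}(1)$ is the unique $1$-connected component. The rest of this sketch concerns the inductive step.

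My first task is to locate the block ${\mathcal B}_m$ inside the components of ${\mathcal H}_{m-1}$. By Lemma \ref{lemma:separacion}, the closures of distinct nodal separating blocks are disjoint, so ${\mathcal B}_m \cap \operatorname{Cl}_{\mathcal H}({\mathcal S}_{m-1}) = \emptyset$; each $K \in {\mathcal B}_m$ is therefore a stratum of ${\mathcal H}_{m-1}(2)$ and the length-one path joining its endpoints is a $1$-path in ${\mathcal H}_{m-1}$. Combined with the $2$-connectedness of ${\mathcal B}_m$ in ${\mathcal H}$, this forces every index $k$ appearing in a stratum of ${\mathcal B}_m$ to lie in a single $1$-connected component of ${\mathcal H}_{m-1}$, which by induction is a fiber $\Phi_{m-1}^{-1}(c_m)$ for a uniquely determined $c_m \in \Delta_{m-1}$. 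Since the two endpoints of any $K \in {\mathcal B}_m$ sit on opposite sides of ${\mathcal B}_m$ by definition of $\Phi_m$, this fiber contributes both $(c_m, 0)$ and $(c_m, 1)$ to $\Delta_m$.

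For any $c \in \Delta_{m-1}$ with $c \ne c_m$, I would argue that $\Phi_{m-1}^{-1}(c)$ lies entirely on one side of ${\mathcal B}_m$: otherwise the induction hypothesis would supply a $1$-path in ${\mathcal H}_{m-1}$ between opposite-side representatives, whose necessarily odd $\kappa_{{\mathcal B}_m}$-count would force some $K \in {\mathcal B}_m$ into its subsupport; the two endpoints of such $K$ would then belong to $\Phi_{m-1}^{-1}(c)$ as well as to $\Phi_{m-1}^{-1}(c_m)$, contradicting $c \ne c_m$. Each such $c$ therefore contributes a unique element to $\Delta_m$, and we obtain $|\Delta_m| = |\Delta_{m-1}| + 1 = m + 1$.

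It remains to identify each $\Phi_m^{-1}(c')$, for $c' \in \Delta_m$, as a $1$-connected component of ${\mathcal H}_m$. The ``only if'' direction is immediate: every step of a $1$-path in ${\mathcal H}_m$ has subsupport outside ${\mathcal S}_m$ and thus preserves every coordinate of $\Phi_m$. The ``if'' direction is where I expect the main obstacle to lie: given $\Phi_m(\{i\}) = \Phi_m(\{j\})$, induction yields a $1$-path $\sigma$ joining them with $\kappa_{{\mathcal B}_\ell}(\sigma) = 0$ for all $\ell < m$, and $\kappa_{{\mathcal B}_m}(\sigma)$ is even. I would drive $\kappa_{{\mathcal B}_m}(\sigma)$ down to $0$ by iterating the surgery of Proposition \ref{prop:conexiondeA}: isolate a subpath $\alpha$ with exactly two crossings of ${\mathcal B}_m$ and replace it by the repair path $\beta$ produced by Lemma \ref{lema:conexiondeA}. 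The crucial point is that this $\beta$ has subsupport entirely outside ${\mathcal N}$, hence outside every block ${\mathcal B}_\ell$ simultaneously, so the replacement decreases $\kappa_{{\mathcal B}_m}$ by $2$ without raising any other $\kappa_{{\mathcal B}_\ell}$. Finitely many iterations yield a $1$-path in ${\mathcal H}_m$ joining $\{i\}$ and $\{j\}$, which closes the induction.
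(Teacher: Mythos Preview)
Your proposal is correct and follows essentially the same route as the paper: the paper packages the same inductive step into Lemmas \ref{lema:dos}, \ref{lema:tres}, Corollary \ref{cor:tres}, and Lemma \ref{lema:componentesconexas}, which correspond exactly to your localization of ${\mathcal B}_m$ in a single fiber $\Phi_{m-1}^{-1}(c_m)$, the one-sidedness of the remaining fibers, the cardinality count, and the surgery via the repair path $\beta$ of Lemma \ref{lema:conexiondeA}. The only cosmetic difference is that for the connectivity of $\Phi_m^{-1}(c)$ the paper reduces the total $\sum_{\ell\le m}\kappa_{{\mathcal B}_\ell}(\sigma)$ directly rather than first invoking the induction hypothesis to zero out $\kappa_{{\mathcal B}_\ell}$ for $\ell<m$; both variants rest on the same key observation that the $\beta$ from Lemma \ref{lema:conexiondeA} has subsupport entirely outside ${\mathcal N}$.
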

Proposition \ref{prop:componentesconexas} comes from Lemma \ref{lema:dos}, Lemma \ref{lema:tres} and Corollary \ref{cor:tres} below:

\begin{lemma}
\label{lema:dos}
Consider an index $0\leq m<n$.
There is an element $\delta^{m}\in \Delta_m$ such that
$
 \{i_1\},\{i_2\} \in \Phi^{-1}_m(\delta^{m})$, for any  $J=\{i_1,i_2\}\in {\mathcal B}_{m+1}$.
\end{lemma}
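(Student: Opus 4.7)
\begin{prf}
The plan is to show, for each nodal separating block $\mathcal{B}_{m+1}$, that $\Phi_m$ is constant on the set of all indices appearing as vertices of the pairs $J \in \mathcal{B}_{m+1}$. This would immediately give the desired $\delta^m \in \Delta_m$.

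First I would fix $J=\{i_1,i_2\}\in\mathcal{B}_{m+1}$ and $\ell\in\{1,\ldots,m\}$, and argue that $\Phi_m(\{i_1\})(\ell)=\Phi_m(\{i_2\})(\ell)$. The key observation is Lemma \ref{lemma:separacion} applied to the distinct nodal separating blocks $\mathcal{B}_{m+1}$ and $\mathcal{B}_\ell$: since $J\in\operatorname{Cl}_{\mathcal{H}}(\mathcal{B}_{m+1})$, the disjointness of closures forces $J\notin\mathcal{B}_\ell$. Hence the elementary $1$-path $\gamma=(\{i_1\},\{i_2\})$ has $\kappa_{\mathcal{B}_\ell}(\gamma)=0$, because its only subsupport entry is $J$. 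Composing a $1$-path witnessing $\{i_1\}\in A_{\mathcal{B}_\ell}(i_0)$ (respectively $B_{\mathcal{B}_\ell}(i_0)$) with $\gamma$ produces a $1$-path from $\{i_0\}$ to $\{i_2\}$ with the same parity, so $\{i_1\}$ and $\{i_2\}$ lie in the same class with respect to $\mathcal{B}_\ell$. This already shows $\Phi_m(\{i_1\})=\Phi_m(\{i_2\})$.

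Next I would propagate this equality across the whole block $\mathcal{B}_{m+1}$. Given two elements $J,J'\in\mathcal{B}_{m+1}$, $2$-connectedness of $\mathcal{B}_{m+1}$ in $\mathcal{H}$ provides a $2$-path $(K_0,K_1,\ldots,K_u)$ in $\mathcal{H}$, with support in $\mathcal{B}_{m+1}$, such that $K_0=J$ and $K_u=J'$. The condition $K_{t-1}\cup K_t\in\mathcal{H}(3)$ means that consecutive $K_t$'s share a vertex. Since, by the previous step, the two vertices of each individual $K_t$ have the same $\Phi_m$-value, and consecutive $K_t$'s share a vertex, a straightforward induction on $t$ gives that $\Phi_m$ is constant on the union of all vertices appearing in the $K_t$'s. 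In particular $\Phi_m$ takes the same value on the vertices of $J$ and on those of $J'$, which finishes the proof upon setting $\delta^m$ to be this common value.

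I do not anticipate any serious obstacle here: once one notices that the separation Lemma \ref{lemma:separacion} prevents $J\in\mathcal{B}_{m+1}$ from belonging to any earlier $\mathcal{B}_\ell$, the rest is a direct translation between the $2$-path structure of $\mathcal{B}_{m+1}$ and the crossing-parity definition of $\Phi_m$.
\end{prf}
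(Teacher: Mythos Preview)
Your proof is correct and follows essentially the same two-step route as the paper: first show $\Phi_m(\{i_1\})=\Phi_m(\{i_2\})$ for each fixed $J=\{i_1,i_2\}\in\mathcal{B}_{m+1}$ via the elementary $1$-path $(\{i_1\},\{i_2\})$, then propagate this equality along a $2$-path in $\mathcal{B}_{m+1}$ by induction on its length. The only cosmetic difference is that you invoke Lemma~\ref{lemma:separacion} (disjointness of closures) to get $J\notin\mathcal{B}_\ell$, whereas the paper uses the simpler fact that distinct nodal separating blocks, being distinct $2$-connected components of $\mathcal{N}(2)$, are already disjoint as subsets of $\mathcal{H}(2)$.
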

\begin{proof} Take $J=\{i_1,i_2\}\in {\mathcal B}_{m+1}$ and select
$
\delta^{m}\in \Delta_m
$ such that $\{i_1\}\in \Phi_m^{-1}(\delta^{m})$.
If $\{i_2\}\notin \Phi_m^{-1}(\delta^{m})$, there is $1\leq \ell\leq m$ such that
$
\Phi_m(\{i_1\})(\ell)\ne \Phi_m(\{i_2\})(\ell).
$
That is, any $1$-path $\gamma$ in $\mathcal H$ joining $\{i_1\}$ and $\{i_2\}$ crosses ${\mathcal B}_\ell$ at an
odd number of entries of $\operatorname{Sub}(\gamma)$. This is not true, since we can take the path
$
(\{i_1\},\{i_2\})
$
where the only entry of the subsupport is $J$, but $J\in {\mathcal B}_{m+1}$ and ${\mathcal B}_\ell\cap {\mathcal B}_{m+1}=\emptyset$. Then, we have that $\{i_2\}\in \Phi_m^{-1}(\delta^{m})$.

Consider another $J'=\{i'_1,i'_2\}\in {\mathcal B}_{m+1}$. We can join $J$ with $J'$ by a $2$-path $\sigma$ in $\mathcal H$ with support in ${\mathcal B}_{m+1}$. Let us do induction on the length $\ell(\sigma)$ of $\sigma$. If $\ell(\sigma)=1$, we have $\sigma:(J,J')$, whith $K=J\cup J'\in {\mathcal H}(3)$. Up to reordering, we have that $i'_1\in J$ and hence
$\{i'_1\}\in \Phi_m^{-1}(\delta^{m})$. By the same argument as before, we conclude that  $\{i'_2\}\in \Phi_m^{-1}(\delta^{m})$. If $\ell(\sigma)>1$, we have
$
\sigma=(J,J_1)*\sigma'$, $\ell(\sigma')=\ell(\sigma)-1$,
where $\sigma'$ joints $J_1$ and $J'$. We know that $J_1=\{j_1,j_2\}$ with $\{j_1\},\{j_2\}\in \Phi_m^{-1}(\delta^{m})$. By induction, we conclude that $\{i'_1\},\{i'_2\}\in \Phi_m^{-1}(\delta^{m})$.
\end{proof}
\begin{lemma}
 \label{lema:tres}
 Consider an index $0\leq m<n$ and take $c\in \Delta_m$, with $c\ne \delta^{m}$. There is $\lambda_m(c)\in \{0,1\}$ such that $\Phi_{m+1}(\{i\})=(c,\lambda_m(c))$, for any $\{i\}\in \Phi^{-1}_m(c)$.
\end{lemma}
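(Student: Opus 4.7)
The plan is to prove Lemma \ref{lema:tres} using the inductive structure of Proposition \ref{prop:componentesconexas}, in which the Proposition at level $m+1$ is derived from Lemmas \ref{lema:dos}, \ref{lema:tres} and Corollary \ref{cor:tres}. This permits the proof of Lemma \ref{lema:tres} at stage $m$ to invoke Proposition \ref{prop:componentesconexas} at level $m$ as inductive hypothesis (the base case $m=0$ is either vacuous, since $\Delta_0$ is a singleton, or trivial because $\mathcal{H}$ is assumed $1$-connected).

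Fix $c \in \Delta_m$ with $c \neq \delta^m$ and pick two arbitrary strata $\{i\}, \{j\} \in \Phi_m^{-1}(c)$. Since the first $m$ coordinates of $\Phi_{m+1}$ coincide with those of $\Phi_m$, it suffices to prove that $\Phi_{m+1}(\{i\})(m+1) = \Phi_{m+1}(\{j\})(m+1)$; the common value can then be declared to be $\lambda_m(c)$. Equivalently, I must exhibit a $1$-path $\gamma$ from $\{i\}$ to $\{j\}$ with $\kappa_{\mathcal{B}_{m+1}}(\gamma) \equiv 0 \pmod{2}$.

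By the inductive hypothesis, $\Phi_m^{-1}(c)$ is a $1$-connected component of $\mathcal{H}_m = \mathcal{H} \setminus \operatorname{Cl}_{\mathcal{H}}(\mathcal{S}_m)$. Hence there is a $1$-path $\gamma$ in $\mathcal{H}_m$ joining $\{i\}$ and $\{j\}$ with $\operatorname{Sop}(\gamma) \subset \Phi_m^{-1}(c)$. Now I combine with Lemma \ref{lema:dos}, which asserts that every $J \in \mathcal{B}_{m+1}$ has both endpoints in $\Phi_m^{-1}(\delta^m)$. Since $c \neq \delta^m$ and each vertex of $\gamma$ lies in $\Phi_m^{-1}(c)$, no entry of $\operatorname{Sub}(\gamma)$ can belong to $\mathcal{B}_{m+1}$: otherwise both of its endpoints would lie in $\Phi_m^{-1}(\delta^m)$, contradicting the inclusion of $\operatorname{Sop}(\gamma)$ in $\Phi_m^{-1}(c)$. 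Therefore $\kappa_{\mathcal{B}_{m+1}}(\gamma) = 0$, which is precisely the statement that $\{j\} \in A_{\mathcal{B}_{m+1}}(\{i\})$.

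The main conceptual step, and the only potential obstacle, is guaranteeing the existence of a $1$-path from $\{i\}$ to $\{j\}$ whose support remains inside the single stratum $\Phi_m^{-1}(c)$; this is exactly what Proposition \ref{prop:componentesconexas} provides at level $m$. Once such a path is available, Lemma \ref{lema:dos} forces it to avoid $\mathcal{B}_{m+1}$ automatically, and the parity conclusion is immediate with no further computation.
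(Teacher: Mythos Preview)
Your argument is correct, but it follows a different route from the paper. The paper does \emph{not} invoke the connected-components description of $\mathcal{H}_m$; instead it works with a single block. Concretely, the paper picks an index $1\le\ell\le m$ with $c_\ell\ne\delta^m_\ell$, uses Proposition~\ref{prop:conexiondeA} to produce a $1$-path $\gamma$ from $\{i\}$ to $\{i'\}$ with $\kappa_{\mathcal{B}_\ell}(\gamma)=0$, observes that every vertex of $\gamma$ then has $\ell$-th coordinate $c_\ell$, and derives a contradiction from Lemma~\ref{lema:dos} at the first crossing of $\mathcal{B}_{m+1}$ (where the $\ell$-th coordinate would have to be $\delta^m_\ell$). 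Your approach instead uses the full statement that $\Phi_m^{-1}(c)$ is $1$-connected in $\mathcal{H}_m$, which gives a path whose \emph{entire} support lies in $\Phi_m^{-1}(c)$ and makes the appeal to Lemma~\ref{lema:dos} immediate.

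Two remarks on the logical organization. First, your inductive framing is legitimate but not strictly necessary: the connectivity of $\Phi_m^{-1}(c)$ in $\mathcal{H}_m$ is exactly Lemma~\ref{lema:componentesconexas} (together with Corollary~\ref{cor:componentesconexas}), whose proof in the paper relies only on the single-block results of Subsection~\ref{Parity Results} and is logically independent of Lemma~\ref{lema:tres}. So one could simply reorder and cite Lemma~\ref{lema:componentesconexas} directly rather than set up an induction. Second, the paper's sentence ``Proposition~\ref{prop:componentesconexas} comes from Lemma~\ref{lema:dos}, Lemma~\ref{lema:tres} and Corollary~\ref{cor:tres}'' refers only to the cardinality claim; the connected-components half is supplied by Lemma~\ref{lema:componentesconexas} and Corollary~\ref{cor:componentesconexas}, which is precisely what your argument needs. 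The trade-off: the paper's proof is self-contained at the point where it appears and uses only the weaker single-block tool, while yours is shorter and more transparent once the stronger multi-block connectivity is granted.
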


\begin{proof} Assume by contradiction that the statement is not true. This means that there are
$\{i\}, \{i'\}\in \Phi_{m}^{-1}(c)$ such that $\{i'\}\in B_{{\mathcal B}_{m+1}}(i)$. Since $\{i\}, \{i'\}\in \Phi_{m}^{-1}(c)$, we have that $\{i'\}\in A_{{\mathcal B}_\ell}(i)$, for $1\leq \ell\leq m$. Consider and index $\ell$ with $1\leq \ell\leq m$ such that $c_\ell\ne \delta^{m}_\ell$ where
$$
 c=(c_1,c_2,\ldots,c_m), \; \delta^{m}=(\delta^{m}_1,\delta^{m}_2,\ldots,\delta^{m}_m).
$$
Choose a $1$-path $\gamma$ in $\mathcal H$ joining $\{i\}$ and $\{i'\}$ such that $\kappa_{{\mathcal B}_\ell}(\gamma)=0$ and  write it as
$$
\gamma=(\{i\}=\{j_0\},\{j_1\},\ldots,{j_s}=\{i'\}).
$$
We know that $J_{t-1,t}=\{j_{t-1},j_t\}\notin {\mathcal B}_\ell$ for any $t=1,2,\ldots,s$. In particular, we have that $\Phi_{m}(\{j_t\})(\ell)=c_\ell$,  for any $t=0,1,\ldots,s$. Moreover, there is an odd number of indices $t$ such that $J_{t-1,t}\in {\mathcal B}_{m+1}$. In particular, there is a first $t_0$ such that $J_{t_0-1,t_0}\in {\mathcal B}_{m+1}$. By Lemma \ref{lema:dos}, we know that
$
\delta^{m}_\ell=\Phi_{m}(\{j_{t_0}\})(\ell)
$. This is the desired contradiction, since $ c_\ell \ne \delta^{m}_\ell$.
\end{proof}

\begin{corollary}
\label{cor:tres}
The set $\Delta_m$ has exactly $m+1$ elements,  for any $0\leq m\leq n$.
\end{corollary}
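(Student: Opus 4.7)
The plan is to prove Corollary \ref{cor:tres} by induction on $m$, using Lemmas \ref{lema:dos} and \ref{lema:tres} as the main tools and analyzing the forgetful projection $\pi\colon \Delta_{m+1}\to \Delta_m$ which drops the last coordinate. The base case $m=0$ is immediate since $\{0,1\}^0$ is a singleton, so $|\Delta_0|=1$.

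For the inductive step, I would assume $|\Delta_m|=m+1$ and show $|\Delta_{m+1}|=m+2$. Because $\Phi_{m+1}$ extends $\Phi_m$ by appending a single new coordinate, the projection $\pi$ is well-defined and surjective, and every fiber $\pi^{-1}(c)$ has cardinality either $1$ or $2$. Lemma \ref{lema:tres} directly supplies that for each $c\in \Delta_m\setminus\{\delta^m\}$, the fiber is forced to the single value $(c,\lambda_m(c))$, so $|\pi^{-1}(c)|=1$. The induction will conclude once I show that the remaining fiber $\pi^{-1}(\delta^m)$ has size exactly $2$: then $|\Delta_{m+1}| = (|\Delta_m|-1)\cdot 1 + 2 = m+2$.

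To prove $|\pi^{-1}(\delta^m)|=2$, I would pick any edge $J=\{i_1,i_2\}\in {\mathcal B}_{m+1}$ (which is nonempty by definition of a nodal separating block). By Lemma \ref{lema:dos}, both $\{i_1\}$ and $\{i_2\}$ belong to $\Phi_m^{-1}(\delta^m)$, so their $\Phi_{m+1}$-images agree on the first $m$ coordinates. Now consider the length-one $1$-path $\gamma=(\{i_1\},\{i_2\})$: its subsupport is the single entry $J\in {\mathcal B}_{m+1}$, so $\kappa_{{\mathcal B}_{m+1}}(\gamma)=1$, an odd number. Consequently $\{i_1\}$ and $\{i_2\}$ lie in opposite classes of the dichotomy $A_{{\mathcal B}_{m+1}}(i_0)$ versus $B_{{\mathcal B}_{m+1}}(i_0)$, so their $(m+1)$-th coordinates under $\Phi_{m+1}$ differ. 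Thus both $(\delta^m,0)$ and $(\delta^m,1)$ are realized in $\Delta_{m+1}$.

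The main obstacle is conceptual rather than technical: one must recognize that it is precisely a single edge inside ${\mathcal B}_{m+1}$ that witnesses the doubling of the fiber over $\delta^m$, and that the doubling happens entirely in the new coordinate because Lemma \ref{lema:dos} pins down the first $m$ coordinates. Once this observation is in place, both lemmas fit together as a clean parity argument and the induction goes through with no further calculation.
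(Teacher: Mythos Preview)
Your proof is correct and follows essentially the same approach as the paper: induction on $m$, with Lemma~\ref{lema:tres} forcing the fibers over $c\ne\delta^m$ to be singletons, and Lemma~\ref{lema:dos} together with the one-edge parity argument showing that both $(\delta^m,0)$ and $(\delta^m,1)$ lie in $\Delta_{m+1}$. Your projection/fiber language is a slight repackaging of the paper's explicit description of $\Delta_{m+1}$, and you handle the case $m=1$ uniformly whereas the paper invokes Corollary~\ref{cor:unacomponente} separately, but the underlying argument is the same.
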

\begin{proof} We proceed by induction on $m$. If $m=0$ we are done, since  $\mathcal H(1)\ne \emptyset$ and $\{0,1\}^{\emptyset}$ has only one element, hence  there is only one mapping $\Phi_0$ that is surjective. If $m=1$, the result is given by Corollary \ref{cor:unacomponente}. Take $1\leq m \leq n$.   To show that $\Delta_{m}$ has $m+1$ elements, it is enough to see that
$$
\Delta_{m}=\{(c,\lambda_{m-1}(c));\; c\in \Delta_{m-1}\setminus\{\delta^{m-1}\}\}\cup \{\delta^{m-1}\}\times\{0,1\}.
$$
By Lemma \ref{lema:tres} we know that any $(c,\lambda)\in \Delta_{m}$ with $c\in \Delta_{m-1}\setminus \{\delta^{m-1}\}$ is such that $\lambda=\lambda_{m-1}(c)$. It remains  to show that both
$
(\delta^{m-1},0)$ and
$
(\delta^{m-1},1)
$
belong to $\Delta_{m}$. For this, take any $J=\{i_1,i_2\}\in {\mathcal B}_{m}$.  Up to reordering the elements in $J$, we have that $\{i_1\}\in A_{{\mathcal B}_{m}}(i_0)$ and $\{i_2\}\in B_{{\mathcal B}_{m}}(i_0)$, thus
$$
\Phi_{m}(\{i_1\})=(\delta^{m-1},0),\quad  \Phi_{m}(\{i_2\})=(\delta^{m-1},1).
$$
This ends the proof.
\end{proof}
\begin{lemma}
\label{lema:componentesconexas} Consider $0\leq m\leq n$, an element
$c\in \Delta_m$ and $\{i\},\{j\}\in\Phi_m^{-1}(c)$. There is a $1$-path $\gamma$ in $\mathcal H$ joining $\{i\}$ and $\{j\}$ such that $\kappa_{{\mathcal B}_\ell}(\gamma)=0$, for any $\ell=1,2,\ldots,m$.
\end{lemma}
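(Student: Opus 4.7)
The plan is to argue by induction on $m$, with a secondary induction on $\kappa_{{\mathcal B}_m}(\gamma')$ nested inside the inductive step. The base case $m=0$ reduces to the $1$-connectedness of ${\mathcal H}$ (a consequence of simple connectedness), since $\{0,1\}^{0}$ is a singleton, $\Phi_0^{-1}(c)={\mathcal H}(1)$, and no crossing constraints are imposed.

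For the inductive step from $m-1$ to $m$, write $c=(c_1,\ldots,c_m)$ and $c'=(c_1,\ldots,c_{m-1})$. First I would apply the induction hypothesis at level $m-1$ to $c'$, noting that $\{i\},\{j\}\in\Phi_{m-1}^{-1}(c')$, to obtain a $1$-path $\gamma'$ joining $\{i\}$ and $\{j\}$ with $\kappa_{{\mathcal B}_\ell}(\gamma')=0$ for $\ell=1,\ldots,m-1$. Because $\{i\}$ and $\{j\}$ agree on the $m$-th coordinate of $\Phi_m$, the parity result of Subsection \ref{Parity Results} forces $\kappa_{{\mathcal B}_m}(\gamma')$ to be an even number.

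Next I would reduce $\kappa_{{\mathcal B}_m}(\gamma')$ to zero by the device already used in the proof of Proposition \ref{prop:conexiondeA}: if $\kappa_{{\mathcal B}_m}(\gamma')\geq 2$, decompose $\gamma'=\delta*\alpha*\rho$ with $\alpha$ a subpath whose first and last entries of $\operatorname{Sub}(\alpha)$ belong to ${\mathcal B}_m$ but whose intermediate entries do not, so that $\kappa_{{\mathcal B}_m}(\alpha)=2$. The endpoints $\{i'\}$ and $\{i''\}$ of $\alpha$ then satisfy $A_{{\mathcal B}_m}(i')=A_{{\mathcal B}_m}(i'')$, and the $2$-connectedness of ${\mathcal B}_m$ guarantees that the set ${\mathcal W}_{i'i''}$ from Lemma \ref{lema:conexiondeA} is nonempty. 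That lemma then supplies a replacement $1$-path $\beta$ joining $\{i'\}$ and $\{i''\}$ whose subsupport is entirely disjoint from ${\mathcal N}$; setting $\gamma_1=\delta*\beta*\rho$ yields $\kappa_{{\mathcal B}_m}(\gamma_1)=\kappa_{{\mathcal B}_m}(\gamma')-2$.

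The main subtlety that needs attention is ensuring this surgery does not resuscitate crossings of ${\mathcal B}_1,\ldots,{\mathcal B}_{m-1}$. For each such $\ell$, the vanishing $\kappa_{{\mathcal B}_\ell}(\gamma')=0$ forces the nonnegative summands $\kappa_{{\mathcal B}_\ell}(\delta)$, $\kappa_{{\mathcal B}_\ell}(\alpha)$, $\kappa_{{\mathcal B}_\ell}(\rho)$ to vanish individually; moreover $\operatorname{Sub}(\beta)\cap{\mathcal N}=\emptyset$ yields $\kappa_{{\mathcal B}_\ell}(\beta)=0$ as well. Hence $\kappa_{{\mathcal B}_\ell}(\gamma_1)=0$ for every $\ell<m$, and a finite iteration of the surgery drives $\kappa_{{\mathcal B}_m}$ down to zero while keeping all earlier counts vanishing. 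This is precisely why Lemma \ref{lema:conexiondeA} was stated with the stronger conclusion that $\operatorname{Sub}(\beta)$ avoids all of ${\mathcal N}$ rather than merely ${\mathcal B}_m$: without this stronger property the two nested inductions would not close cleanly.
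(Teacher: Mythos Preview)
Your proof is correct and rests on the same engine as the paper's: the surgery from Lemma~\ref{lema:conexiondeA}, whose conclusion that the replacement path $\beta$ has subsupport disjoint from all of~${\mathcal N}$ (not just from the block being treated) is exactly what keeps the other crossing counts at zero. You identify this point clearly.

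The organisation differs. You run an outer induction on~$m$ and, at each step, an inner induction on $\kappa_{{\mathcal B}_m}$, excising two \emph{consecutive} ${\mathcal B}_m$-crossings at a time while the earlier counts are already zero. The paper instead starts from an arbitrary path~$\sigma$ and does a single induction on the total $k_m(\sigma)=\sum_{\ell=1}^m\kappa_{{\mathcal B}_\ell}(\sigma)$: it picks any $\ell$ with $\kappa_{{\mathcal B}_\ell}(\sigma)>0$, locates the \emph{extremal} (first and last) ${\mathcal B}_\ell$-crossings, and replaces the entire stretch between them by a $\beta$ with $\operatorname{Sub}(\beta)\cap{\mathcal N}=\emptyset$, so that $k_m$ drops by at least two. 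The paper's scheme is a bit more economical (one induction, and each surgery may kill crossings of several blocks at once), while yours makes the role of the induction hypothesis at level $m-1$ more explicit; both are valid and rely on the same key lemma.
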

\begin{proof} Take a $1$-path $\sigma$ in ${\mathcal H}$ joining $\{i\}$ and $\{j\}$. We know that $
\kappa_{{\mathcal B}_\ell}(\sigma)
$ is an even number, for any $\ell=1,2,\ldots, m$. Let us put $k_m(\sigma)=\sum_{\ell=1}^m\kappa_{{\mathcal B}_\ell}(\sigma)$. If $k_m(\sigma)=0$, we are done. Assume that $k_m(\sigma)>0$, hence $k_m(\sigma)\geq 2$. To conclude the proof, it is enough to find a $1$-path $\sigma'$ in ${\mathcal H}$ joining $\{ i \}$  and $\{ j \}$ such that $k_m(\sigma')<k_m(\sigma)$.

 Select $1\leq \ell\leq m$ such that $\kappa_{{\mathcal B}_\ell}(\sigma)>0$ and write $\sigma$ as
$$
\sigma=\alpha*(\{j_1\},\{j'_1\})*\tau*(\{j_2\},\{j'_2\})*\omega,
$$
where $\{j_1,j'_1\},\{j_2,j'_2\}\in {\mathcal B}_\ell$ are the extremal elements of ${\mathcal B}_\ell$ appearing as entries in the subsupport of $\sigma$. In particular $\Phi_m(\{j_1\})(\ell)=\Phi_m(j'_2)(\ell)$. Using Lemma  \ref{lema:conexiondeA} as in Proposition \ref{prop:conexiondeA}, we can substitute the $1$-path
$$
(\{j_1\},\{j'_1\})*\tau*(\{j_2\},\{j'_2\})
$$
by a $1$-path $
\beta= (\{j_1\}=\{j_1''\},\{j''_2\},\ldots,\{j''_s\}=\{j'_2\})$,
where the $\{j''_{t-1},j''_t\}\notin {\mathcal N}$. Now, we take
$
\sigma'=\alpha*\beta*\omega
$.
\end{proof}
\begin{corollary}
\label{cor:componentesconexas} For any $0\leq m\leq n$, the $1$-connected components of ${\mathcal H}_{m}$ are the sets $\Phi_m^{-1}(c)$, where $c\in \Delta_m$.
\end{corollary}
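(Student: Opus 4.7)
The plan is to deduce this corollary directly from Lemma \ref{lema:componentesconexas}, after a small observation about which strata of $\mathcal{H}$ survive in $\mathcal{H}_m=\mathcal{H}\setminus \operatorname{Cl}_{\mathcal{H}}(\mathcal{S}_m)$. Since every element of $\mathcal{S}_m$ lies in $\mathcal{H}(2)$, no singleton can contain one, so $\mathcal{H}_m(1)=\mathcal{H}(1)$; and since for $J\in\mathcal{S}_m$ the closure $\operatorname{Cl}_{\mathcal{H}}(\{J\})$ only adds strata of cardinality $\geq 3$, we have $\mathcal{H}_m(2)=\mathcal{H}(2)\setminus\mathcal{S}_m$. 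Consequently, a $1$-path $\gamma$ in $\mathcal{H}$ is a $1$-path in $\mathcal{H}_m$ if and only if its subsupport avoids $\mathcal{S}_m$, i.e.\ if and only if $\kappa_{\mathcal{B}_\ell}(\gamma)=0$ for every $\ell=1,2,\ldots,m$.

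Using this translation I would prove the two halves of the statement. For $1$-connectedness of each fiber $\Phi_m^{-1}(c)$ in $\mathcal{H}_m$: given two singletons $\{i\},\{j\}\in \Phi_m^{-1}(c)$, Lemma \ref{lema:componentesconexas} supplies a $1$-path $\gamma$ in $\mathcal{H}$ joining them with $\kappa_{\mathcal{B}_\ell}(\gamma)=0$ for all $\ell\le m$, and by the observation above $\gamma$ is in fact a $1$-path in $\mathcal{H}_m$. For the separation of distinct fibers: if $\{i\}$ and $\{j\}$ are joined by a $1$-path $\gamma$ in $\mathcal{H}_m$, the same observation gives $\kappa_{\mathcal{B}_\ell}(\gamma)=0$ for every $\ell\le m$; in particular each $\kappa_{\mathcal{B}_\ell}(\gamma)$ is even, so the definition of $\Phi_m$ forces $\Phi_m(\{i\})(\ell)=\Phi_m(\{j\})(\ell)$ coordinatewise, hence $\Phi_m(\{i\})=\Phi_m(\{j\})$.

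These two facts together say exactly that the nonempty sets $\Phi_m^{-1}(c)$, $c\in\Delta_m$, form the partition of $\mathcal{H}_m(1)=\mathcal{H}(1)$ into $1$-connected components of $\mathcal{H}_m$. I do not anticipate any real obstacle: the combinatorial heart of the argument has already been absorbed into Lemma \ref{lema:componentesconexas} (which in turn rests on the parity analysis of Subsection \ref{Parity Results}), and Corollary \ref{cor:tres} gives $|\Delta_m|=m+1$, so once the corollary is established it immediately supplies the count needed for Proposition \ref{prop:componentesconexas} and ultimately for Theorem \ref{teo:componentsconexas}.
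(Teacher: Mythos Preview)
Your proposal is correct and follows essentially the same approach as the paper's own proof: use Lemma~\ref{lema:componentesconexas} for the $1$-connectedness of each fiber, and a parity/crossing argument (the paper phrases it contrapositively, choosing an index $\ell$ where $c_\ell\ne c'_\ell$) for the separation of distinct fibers. Your explicit identification $\mathcal{H}_m(1)=\mathcal{H}(1)$, $\mathcal{H}_m(2)=\mathcal{H}(2)\setminus\mathcal{S}_m$ is a helpful clarification that the paper leaves implicit.
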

\begin{proof}
By Lemma \ref{lema:componentesconexas}, each $\Phi_m^{-1}(c)$ is $1$-connected in ${\mathcal H}_m$ and it is nonempty, since $c\in \Delta_m$. It remains to see that  there is no $1$-path joining a stratum $\{i\}\in \Phi_m^{-1}(c)$ with $\{i'\}\in \Phi_m^{-1}(c')$ in ${\mathcal H}_m$, when $c\ne c'$. Taking an index $1\leq\ell\leq m$ such that $c_\ell\ne c'_\ell$, we have $\{i'\}\in B_{{\mathcal B}_\ell}(i)$, then, any $1$-path $\gamma$ in $\mathcal H$ joining $\{i\}$ and $\{i'\}$ has $\kappa_{{\mathcal B}_\ell}\geq 1$ and it does not define a $1$-path in ${\mathcal H}_\ell$ and ``a fortiori'' in ${\mathcal H}_m$.
\end{proof}

\section{Combinatorially Simply connected Ambient Spaces}
This section is devoted to prove the following statement:
\begin{theorem}
\label{teo:estabilidadsimplyconnected}
Let  $f:{\mathcal M}\rightarrow {\mathcal M}_0$ be a standard transformation, where we have that ${\mathcal M}_0=({\mathbb C}^d, E_0; \{0\})$. The combinatorial strata structure ${\mathcal H}_{\mathcal M}$ is simply connected.
\end{theorem}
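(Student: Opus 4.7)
The plan is to argue by induction on the length $N$ of the standard transformation $f\colon {\mathcal M}\to {\mathcal M}_0$. The base case $N=0$ is immediate: then ${\mathcal M}={\mathcal M}_0=({\mathbb C}^d,\emptyset;\{0\})$, the minimal index set is empty, and ${\mathcal H}_{{\mathcal M}_0}(1)=\emptyset$ is vacuously $1$-connected and simply connected. For the inductive step, factor $f=f'\circ\pi$ with $\pi\colon {\mathcal M}\to \widetilde{\mathcal M}$ a single standard blow-up with centre $(Y,T)$ and $f'\colon \widetilde{\mathcal M}\to {\mathcal M}_0$ of length $N-1$, so that by induction ${\mathcal H}_{\widetilde{\mathcal M}}$ is simply connected. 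The task is to transfer this property across $\pi$.

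Write $I=\widetilde{I}\cup\{*\}$ with $*$ indexing the new exceptional divisor $E_*$. Using the normal-crossings local model at the centre, the first technical step is the explicit description of ${\mathcal H}_{\mathcal M}$ in terms of ${\mathcal H}_{\widetilde{\mathcal M}}$ and the position of $Y$: for $J\subseteq\widetilde{I}$ one has $J\in {\mathcal H}_{\mathcal M}$ iff the strict transform $\widetilde{E}_J$ is nonempty in ${\mathcal M}$, and $J\cup\{*\}\in{\mathcal H}_{\mathcal M}$ iff $\widetilde{E}_J\cap Y\ne\emptyset$ in $\widetilde{\mathcal M}$. Thus the combinatorial \emph{link} of $\{*\}$ (the collection of $J\subseteq\widetilde{I}$ with $J\cup\{*\}\in{\mathcal H}_{\mathcal M}$) encodes the sub-structure of ${\mathcal H}_{\widetilde{\mathcal M}}$ lying over $Y$. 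By the connectedness assumptions built into the definition of standard ambient space and the projective-bundle structure of $E_*$ over the connected $Y$, this link is $1$-connected; appealing to Remark \ref{rk:tresdimensionalcase}, only codimension-three information is relevant, and a direct analysis of triple intersections meeting $Y$ shows the link is also simply connected.

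With this description, simple connectedness of ${\mathcal H}_{\mathcal M}$ is proved by a rectification procedure. Given a $1$-path $\gamma$ in ${\mathcal H}_{\mathcal M}$ with endpoints in $\widetilde{I}$, every subpath $(\{a\},\{*\},\{b\})$ is replaced, via elementary homotopies through $2$-simplices of type $\{a,c,*\}$ coming from the simply connected link of $\{*\}$, by a path that avoids $\{*\}$; iterating, $\gamma$ is rendered homotopic in ${\mathcal H}_{\mathcal M}$ to a $1$-path contained in ${\mathcal H}_{\widetilde{\mathcal M}}$. Applying the same reduction to any other $1$-path with the same endpoints reduces the problem to the inductive hypothesis in ${\mathcal H}_{\widetilde{\mathcal M}}$; each elementary homotopy there either lifts verbatim to ${\mathcal H}_{\mathcal M}$ (when the associated triple intersection is disjoint from $Y$) or must be replaced by elementary homotopies in ${\mathcal H}_{\mathcal M}$ involving $\{*\}$ (when it meets $Y$).

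The main obstacle is precisely this last replacement: when a $2$-simplex $\{i_1,i_2,i_3\}\in{\mathcal H}_{\widetilde{\mathcal M}}$ has its triple intersection entirely swallowed by $Y$, the elementary homotopic pair it supports in ${\mathcal H}_{\widetilde{\mathcal M}}$ need no longer be elementary in ${\mathcal H}_{\mathcal M}$, and one must rebuild the corresponding homotopy out of elementary homotopies through the new triangles $\{i_k,*,\cdot\}$. The ingredient that makes this possible is exactly the simple connectedness of the link of $\{*\}$ established above, together with the projective-bundle structure of $E_*$ over $Y$. Handling this cleanly requires a careful local coordinate analysis distinguishing the components of $\widetilde{E}$ that contain $Y$ from those meeting $Y$ transversally, and invoking the connectedness of each intersection $\widetilde{E}_J$ guaranteed by the standard ambient space hypothesis.
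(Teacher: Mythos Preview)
Your overall strategy---induction on the length of $f$ and analyzing how ${\mathcal H}_{\mathcal M}$ is built from ${\mathcal H}_{\widetilde{\mathcal M}}$ by adjoining a new vertex $\{*\}$---matches the paper's. However, the paper does \emph{not} carry out the inductive step by your path-rectification argument: it invokes Proposition~\ref{prop:simplyconnected} to pass to the topological realization $\Omega_{{\mathcal H}_3}$, writes $\Omega_{{\mathcal H}'_3}$ as a union of $\Omega_{{\mathcal H}_3\setminus{\mathcal Z}_Y}$ with the cone ${\mathcal C}\Omega_{{\mathcal B}_Y}$, and applies Seifert--van Kampen case by case according to Lemma~\ref{lema:centersofblowup}. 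Your combinatorial rectification is morally a hand-built version of the same van Kampen argument, and with enough care it can be made to work, but as written it has two concrete problems.

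First, your assertion that the link of $\{*\}$ is simply connected is false in general and, fortunately, unnecessary. By Remark~\ref{rk:tresdimensionalcase} the relevant link ${\mathcal B}_Y$ lives in ${\mathcal H}_2$, so its realization is a graph; as soon as three components of $\widetilde E$ pass through a point of $Y$ you get a triangle in ${\mathcal B}_Y$ and the link has a nontrivial loop. What your rectification actually uses is only that the \emph{cone} over the link (the star of $\{*\}$) kills all such loops, which is automatic; you should rephrase accordingly.

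Second, and more seriously, the link ${\mathcal B}_Y$ is \emph{not} always $1$-connected, so your rectification step can fail outright. Take $Y=E_{i_1}\cap E_{i_2}$ with no third component of $\widetilde E$ meeting $Y$ (the case $L=\emptyset$ of the paper's case~(c)). Then ${\mathcal B}_Y(1)=\{\{i_1\},\{i_2\}\}$ and ${\mathcal B}_Y(2)=\emptyset$: the link is two isolated points, and a subpath $(\{i_1\},\{*\},\{i_2\})$ cannot be pushed off $\{*\}$ inside the link. The paper handles this by observing that here $\Omega_{{\mathcal H}'_3}$ is obtained from $\Omega_{{\mathcal H}_3}$ by subdividing the edge $\Delta_{i_1,i_2}$ into $\Delta_{i_1,\infty}\cup\Delta_{\infty,i_2}$, hence is homeomorphic to it. Your proposal is missing precisely this case distinction (Lemma~\ref{lema:centersofblowup} and the separate treatment of $L=\emptyset$ versus $L\ne\emptyset$). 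Without it, both ``the link is $1$-connected'' and the subsequent lifting of elementary homotopies across destroyed edges and $2$-simplices remain unjustified; the vague appeal in your last paragraph to ``a careful local coordinate analysis'' is exactly where the paper's explicit case-by-case computation sits.
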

 Theorem \ref{teo:estabilidadsimplyconnected} is a consequence of Proposition \ref{prop:estabilidadsimplyconnected} below.
\begin{proposition}
\label{prop:estabilidadsimplyconnected}
Let $\pi:{\mathcal M}'\rightarrow {\mathcal M}$ be a standard blow-up with center $Y$, where ${\mathcal H}_{\mathcal M}$ is simply connected. Then ${\mathcal H}_{{\mathcal M}'}$ is simply connected.
\end{proposition}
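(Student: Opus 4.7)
The plan is to describe $\mathcal{H}_{\mathcal{M}'}$ explicitly in terms of $\mathcal{H}_{\mathcal{M}}$ and the center $Y$, and then reduce simple connectedness of $\mathcal{H}_{\mathcal{M}'}$ to that of $\mathcal{H}_{\mathcal{M}}$. Write $I'=I\cup\{i^*\}$ with $i^*$ indexing the new exceptional divisor $E_{i^*}=\pi^{-1}(Y)$, and let $J_Y=\{j\in I: Y\subset E_j\}$. The local coordinate form of a blow-up at a normal-crossings center gives the following description, which I would verify first: for $J\subset I$, $J\in\mathcal{H}_{\mathcal{M}'}$ iff $J\in\mathcal{H}_{\mathcal{M}}$ and $E_J\not\subset Y$; and for $J_0\subset I$, $J_0\cup\{i^*\}\in\mathcal{H}_{\mathcal{M}'}$ iff $E_{J_0}\cap Y\ne\emptyset$ and $E_{J_0}\not\subset Y$. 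In particular, every edge $\{a,b\}\in\mathcal{H}_{\mathcal{M}'}(2)$ with $a,b\in I$ is already an edge of $\mathcal{H}_{\mathcal{M}}$.

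Given two $1$-paths $\gamma_0,\gamma_1$ in $\mathcal{H}_{\mathcal{M}'}$ with common endpoints, I proceed in two stages. In stage A, I reduce each $\gamma_\epsilon$ to an equivalent $1$-path $\bar\gamma_\epsilon$ whose support avoids $\{i^*\}$. The key observation is that if $\{a,i^*\},\{i^*,b\}\in\mathcal{H}_{\mathcal{M}'}(2)$ there is always some $\{c\}$ (typically $\{j_0\}$ with $j_0\in J_Y$, or otherwise a suitably chosen old $1$-stratum) such that both $\{a,c,i^*\}$ and $\{b,c,i^*\}$ lie in $\mathcal{H}_{\mathcal{M}'}(3)$: e.g.\ for $j_0\in J_Y$ one has $E_a\cap E_{j_0}\supset E_a\cap Y\ne\emptyset$ and is not contained in $Y$ since $E_a$ is a hypersurface while $Y$ has codimension at least two. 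Then the elementary homotopies $(\{a\},\{i^*\},\{b\})\rightsquigarrow (\{a\},\{c\},\{i^*\},\{b\})\rightsquigarrow (\{a\},\{c\},\{b\})$ remove the visit to $\{i^*\}$; iterating produces $\bar\gamma_\epsilon$.

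In stage B, the reduced paths $\bar\gamma_0,\bar\gamma_1$ are $1$-paths in $\mathcal{H}_{\mathcal{M}}$ with the same endpoints. By the hypothesis of simple connectedness of $\mathcal{H}_{\mathcal{M}}$ they are connected by a finite sequence of elementary homotopies in $\mathcal{H}_{\mathcal{M}}$, each using some $2$-simplex $\sigma=\{i_1,i_2,i_3\}\in\mathcal{H}_{\mathcal{M}}(3)$. If $E_\sigma\not\subset Y$ then $\sigma\in\mathcal{H}_{\mathcal{M}'}(3)$ and the homotopy lifts verbatim. Otherwise $\sigma$ is missing in $\mathcal{H}_{\mathcal{M}'}$, but I simulate the elementary homotopy by a detour through $\{i^*\}$: each edge $\{i_k,i^*\}$ is present, and the $2$-simplices $\{i_k,i_l,i^*\}$ that survive in $\mathcal{H}_{\mathcal{M}'}(3)$ are enough to perform the local exchange via a sequence of elementary homotopies threading through $\{i^*\}$.

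The main obstacle is the simulation in stage B when $E_\sigma\subset Y$: some of the edges $\{i_k,i_l\}$ may themselves satisfy $E_{\{i_k,i_l\}}\subset Y$ and hence be absent from $\mathcal{H}_{\mathcal{M}'}$, forcing the substitute $1$-paths to route entirely through $\{i^*\}$. The combinatorial bookkeeping needed to exhibit such detours in each case, and to combine them consistently along the whole sequence of homotopies, is the technical heart of the argument. At the conceptual level it reflects the fact that $\{i^*\}$ plays the role of a cone vertex over the sub-collection of strata of $\mathcal{H}_{\mathcal{M}}$ meeting $Y$, so that the pieces of $\mathcal{H}_{\mathcal{M}}$ removed in passing to $\mathcal{H}_{\mathcal{M}'}$ are replaced by a contractible star around $\{i^*\}$, preserving simple connectedness.
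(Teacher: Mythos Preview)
Your description of $\mathcal{H}_{\mathcal{M}'}$ in terms of $\mathcal{H}_{\mathcal{M}}$, $J_Y$ and the new index $i^*$ is correct and coincides with the paper's description via the sets $\widetilde{\mathcal{Z}}_Y$ and $\widetilde{\mathcal{B}}_Y$. The closing intuition---that $\{i^*\}$ behaves like a cone vertex over the strata meeting $Y$---is also exactly right. However, Stage~A as you state it has a genuine gap.

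The claim that every visit to $\{i^*\}$ can be removed by elementary homotopies fails already in the case $Y=E_{J_0}$ with $J_0=\{i_1,i_2\}\in\mathcal{H}_{\mathcal{M}}(2)$ and no third component of $E$ meeting $Y$. Here the edge $\{i_1,i_2\}$ disappears from $\mathcal{H}_{\mathcal{M}'}$ and is replaced by the two edges $\{i_1,i^*\}$, $\{i_2,i^*\}$, while $\mathcal{H}_{\mathcal{M}'}(3)$ contains \emph{no} triangle through $i^*$ at all (since $\mathcal{B}_Y=\{\emptyset,\{i_1\},\{i_2\}\}$). Thus the subpath $(\{i_1\},\{i^*\},\{i_2\})$ admits no elementary homotopy moving it off $\{i^*\}$. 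If moreover $\{i_1,i_2\}$ is a bridge of the $1$-skeleton of $\mathcal{H}_{\mathcal{M}}$ (as happens, for instance, after two successive point blow-ups of $({\mathbb C}^3,0)$ and then blowing up the resulting $E_1\cap E_2$), there is simply no $1$-path from $\{i_1\}$ to $\{i_2\}$ in $\mathcal{H}_{\mathcal{M}'}$ avoiding $\{i^*\}$: Stage~A is impossible. Your justification for the existence of $c$ also slips at this point: the sentence ``is not contained in $Y$ since $E_a$ is a hypersurface while $Y$ has codimension at least two'' bounds $E_a$, not $E_a\cap E_{j_0}$; with $a=i_1$, $j_0=i_2$ one has $E_a\cap E_{j_0}=Y$. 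Finally, even where Stage~A goes through, Stage~B is only sketched: you acknowledge that missing edges force detours ``entirely through $\{i^*\}$'' but do not carry out that bookkeeping.

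The paper sidesteps these difficulties by passing to the geometric realizations $\Omega_{\mathcal{H}_3}$, $\Omega_{\mathcal{H}'_3}$ and using the topological Seifert--van~Kampen theorem. A short case analysis (whether $\mathcal{Z}_Y$ is empty, a single triangle, or of the form $\{J_0\}\cup\{J_0\cup\{k\}\}_{k\in L}$) exhibits $\Omega_{\mathcal{H}'_3}$ either as homeomorphic to $\Omega_{\mathcal{H}_3}$ (precisely in the bridge case above) or as a union of a simply connected piece with a cone over a connected subcomplex. This topological packaging is what absorbs the case that breaks your purely combinatorial reduction; if you want to stay combinatorial, you would need to replace Stage~A by a two-way translation of paths (sending the removed edges of $\mathcal{H}_{\mathcal{M}}$ to detours through $i^*$ rather than the reverse) and then prove a combinatorial van~Kampen statement directly.
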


\subsection{The Topological Simplicial Complex} Before starting the proof of Proposition \ref{prop:estabilidadsimplyconnected}, let us consider the natural topological object $\Omega_{\mathcal H}$ associated to a given combinatorial strata structure $\mathcal H$ with minimal set of indices $I$.

For any $i\in I$, let us denote by $\xi^I_i\in {\mathbb R}^I$ the characteristic function of $\{i\}\subset I$. Given $J\subset I$, the {\em simplex $\Delta^I_J$} is the convex hull in ${\mathbb R}^I$ of the set
$\{\xi^I_i; i \in J\}$. By convention, we take $\Delta_\emptyset^I=\emptyset$. The faces $\Delta^I_{J'}$ of $\Delta^I_J$ correspond to the subsets $J'\subset J$.

Given a subset $A\subset {\mathcal P}(I)$, we denote by $\Omega^I_A$ the topological simplicial complex defined by
$
 \Omega^I_A=\cup_{J\in A}\Delta^I_J
$, considered as a topological subspace of ${\mathbb R}^I$. We see that $A$ is a combinatorial strata structure if and only if it has the maximality property:
$
\Omega^I_{A'}=\Omega^I_A\Rightarrow A'\subset A
$.

If $I\subset I'$, we have a closed immersion ${\mathbb R}^I\subset {\mathbb R}^{I'}$ that allows us to identify  $\Omega^I_A$ with $\Omega^{I'}_A$;
we denote just $\Omega_A=\Omega_A^I=\Omega_A^{I'}$.

Consider an element $\infty\notin I$ and a subset  $A\subset {\mathcal P}(I)$.  We denote
$$
A*\{\infty\}= \{J\cup \{\infty\};\; J\in A\}.
$$
The topological simplicial complex $\Omega_{A*\{\infty\}}\subset {\mathbb R}^{I\cup\{\infty\}}$ is the {\em cone } of $\Omega_{A}$ over $\infty$. We denote ${\mathcal C}\Omega_{A}=\Omega_{{A}*\{\infty\}}$, when no confusion arises.

\begin{proposition}
 \label{prop:simplyconnected}
 Let ${\mathcal H}\subset {\mathcal P}(I)$ be a combinatorial strata structure. Then
 $\mathcal H$ is $1$-connected if and only if $\Omega_{\mathcal H}$ is connected. Moreover
 $\mathcal H$ is simply connected if and only if $\Omega_{{\mathcal H}_3}$ is a simply connected topological space.
\end{proposition}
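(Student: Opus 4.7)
The plan is to reduce both equivalences to classical results in simplicial topology. For the first assertion, observe that the $1$-skeleton of $\Omega_{\mathcal H}$ has vertex set $\{\xi_i^I : \{i\}\in{\mathcal H}(1)\}$ and edges $\Delta^I_{\{i,j\}}$ for $\{i,j\}\in{\mathcal H}(2)$. A combinatorial $1$-path $(\{i_0\},\{i_1\},\ldots,\{i_s\})$ in ${\mathcal H}$ is tautologically an edge path in this $1$-skeleton from $\xi^I_{i_0}$ to $\xi^I_{i_s}$, and conversely every edge path arises in this way. Since a simplicial complex is path-connected if and only if its $1$-skeleton is (each closed simplex being contractible and containing all its vertices), we conclude that ${\mathcal H}$ is $1$-connected if and only if $\Omega_{\mathcal H}$ is connected.

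For the second assertion, the first step is to identify $\Omega_{{\mathcal H}_3}$ with the $2$-skeleton of $\Omega_{\mathcal H}$: since $\dim\Delta^I_J=|J|-1$, the simplices of dimension at most $2$ correspond precisely to $J\in\{\emptyset\}\cup{\mathcal H}(1)\cup{\mathcal H}(2)\cup{\mathcal H}(3)={\mathcal H}_3$. By the classical fact that the inclusion of the $2$-skeleton of a CW complex induces an isomorphism on fundamental groups, one has $\pi_1(\Omega_{{\mathcal H}_3})\cong \pi_1(\Omega_{\mathcal H})$. It therefore suffices to show that combinatorial simple connectedness of ${\mathcal H}$ is equivalent to $\pi_1(\Omega_{{\mathcal H}_3})=1$ (under the $1$-connectedness already provided by part~(i)).

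This last equivalence is the content of the \emph{edge-path group theorem} (see for instance \cite{Koz}): for a connected simplicial complex $K$ with base vertex $v_0$, the edge-path group $E(K,v_0)$, generated by edge loops at $v_0$ modulo the relations $(v,w,v)\sim(v)$ and $(u,v,w)\sim(u,w)$ whenever $\{u,v,w\}$ spans a $2$-simplex of $K$, is canonically isomorphic to $\pi_1(K,v_0)$. These two generating relations are exactly the two nontrivial elementary homotopic pairs in the paper's definition: the first is the back-and-forth move $(\{i_1\},\{i_2\},\{i_1\})\sim(\{i_1\})$, and the second is the triangle filling $(\{i_1\},\{i_2\})\sim(\{i_1\},\{i_3\},\{i_2\})$ when $\{i_1,i_2,i_3\}\in{\mathcal H}(3)$. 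Passing from loops at a fixed base vertex to homotopic equivalence between $1$-paths with common endpoints is standard (concatenate one with the reverse of the other to form a loop), and part~(i) ensures all base vertices give isomorphic edge-path groups.

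The only genuinely non-formal step is the dictionary between the paper's elementary homotopies and the generating relations of $E(K,v_0)$, together with the compatibility of concatenation and reversal with the group structure; this is a direct inspection with no hidden relations, so I do not expect a serious obstacle here. The rest is citation of standard facts from simplicial topology.
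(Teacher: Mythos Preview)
Your proof is correct and, for part~(i), essentially identical to the paper's argument. For part~(ii), however, your route differs genuinely from the paper's. You invoke the classical edge-path group theorem directly: you identify the paper's elementary homotopy moves with the standard generating relations of $E(K,v_0)$ and then cite the isomorphism $E(K,v_0)\cong\pi_1(K,v_0)$, together with the fact that the $2$-skeleton inclusion induces an isomorphism on $\pi_1$. The paper instead gives a self-contained inductive proof of $\pi_1({\mathcal H},\{i_0\})\cong\pi_1(\Omega_{{\mathcal H}_3},\xi_{i_0})$ on the lexicographic counter $(\sharp I,\sharp{\mathcal H})$: when ${\mathcal H}(3)=\emptyset$ it collapses or contracts stars of vertices to reduce $\sharp I$, and when ${\mathcal H}(3)\ne\emptyset$ it removes a single $3$-stratum $J$ and applies Seifert--Van Kampen on both the combinatorial and the topological side, matching the quotient by the loop $(\{i_1\},\{i_2\},\{i_3\},\{i_1\})$. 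Your approach is shorter and more conceptual, trading the paper's self-containment for reliance on a standard textbook result; the paper's induction, though longer, is explicit about how each triangle relation enters and avoids any appeal to CW-skeleton or simplicial-approximation machinery.
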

\begin{proof} See  Appendix \ref{Appendix: Simply Connectedness}.
\end{proof}

\subsection{Combinatorial Strata Sructure After Blow-up} We start here the proof of Proposition \ref{prop:estabilidadsimplyconnected}.
In order to simplify notations, let us denote ${\mathcal H}={\mathcal H}_{\mathcal M}$ and ${\mathcal H}'={\mathcal H}_{{\mathcal M}'}$. We know that ${\mathcal H}_3$ is simply connected and we have to prove that ${\mathcal H}'_3$ is simply connected, see Remark
\ref{rk:tresdimensionalcase}. In view of Proposition  \ref{prop:simplyconnected}, it is enough to see that the topological simplicial complex $\Omega_{{\mathcal H}'_3}$ is simply connected.

We start by a description of ${\mathcal H}'$ and of ${\mathcal H}'_3$. Recall that $E=\cup _{i\in I}E_i$. Let us take a symbol $\infty \notin I$. We have that
$
E'=E'_\infty\cup ( \cup_{i\in I}E'_i)
$,
where the $E'_i$ are the strict transforms of $E_i$, for $i\in I$, and $E'_\infty=\pi^{-1}(Y)$ is the exceptional divisor of the blowing-up. Hence, the minimal set of indices for ${\mathcal H}'$ is $I'=I\cup \{\infty\}$. Define $\widetilde{\mathcal Z}_Y$ and $\widetilde{\mathcal B}_Y$ by
$$
\widetilde{\mathcal Z}_Y=\{J\in {\mathcal H};\quad E_J\subset Y \},
\quad
\widetilde{\mathcal B}_Y= \{J\in {\mathcal H};\quad E_J\cap Y\ne \emptyset,\;  E_J\not\subset Y \}.
$$
In view of the usual properties of the blow-ups, we describe ${\mathcal H}'$ by the following equalities:
$$
{\mathcal H}'\cap {\mathcal P}(I)={\mathcal H}\setminus \widetilde{\mathcal Z}_Y,\quad
{\mathcal H}'\setminus {\mathcal P}(I)=\widetilde{\mathcal B}_Y*\{\infty\}.
$$
Note that $\widetilde{\mathcal Z}_Y$ defines the strata disappearing after the blowing-up and $\widetilde{\mathcal B}_Y$ the strata whose strict transform intersects the exceptional divisor. Let us denote
$
{\mathcal Z}_Y={\mathcal H}_3\cap\widetilde{\mathcal Z}_Y
$
 and
 $
{\mathcal B}_Y={\mathcal H}_2\cap\widetilde{\mathcal B}_Y
$, then we have
\begin{equation}
\label{eq:hprimatres}
{\mathcal H}'_3\cap {\mathcal P}(I)={\mathcal H}_3\setminus {\mathcal Z}_Y; \quad
{\mathcal H}'_3\setminus {\mathcal P}(I)={\mathcal B}_Y*\{\infty\}.
\end{equation}
Let us note that ${\mathcal Z}_Y$ is closed in ${\mathcal H}_3$ and ${\mathcal B}_Y$ is open in ${\mathcal H}_2$. Hence both ${\mathcal H}'_3\cap {\mathcal P}(I)$ and ${\mathcal B}_Y$ are combinatorial strata structures.
\begin{lemma}
\label{lema:centersofblowup}
One of the following possibilities holds:
\begin{enumerate}
\item[a)] ${\mathcal Z}_Y=\emptyset$.
\item[b)] There is $J_0\in {\mathcal H}(3)$ such that  $Y=E_{J_0}$.
\item[c)] There is $J_0\in {\mathcal H}(2)$ such that  $Y=E_{J_0}$.
\end{enumerate}
\end{lemma}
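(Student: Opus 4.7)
The plan is to first dispatch case (a) and then show that whenever $\mathcal{Z}_Y \neq \emptyset$, the center $Y$ must coincide globally with one of the closed strata $E_{J_0}$ of codimension $2$ or $3$. I would start by picking any $J \in \mathcal{Z}_Y$, so $|J| \leq 3$ and $E_J \subset Y$, and focus on a generic point $q \in S_J$, i.e., a point of $E_J$ such that the irreducible components of $E$ through $q$ are exactly $\{E_i : i \in J\}$. The normal-crossings hypothesis of the family $\{Y\}\cup\{E_i\}_{i\in I}$ at $q$ lets me choose local coordinates $(z_1,\ldots,z_n)$ and an injection $\phi : J \hookrightarrow \{1,\ldots,n\}$ such that $E_i = (z_{\phi(i)}=0)$ for $i \in J$ and $Y = (z_a = 0 : a \in A)$ for some $A\subset \{1,\ldots,n\}$.

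The first substantive step is the combinatorial observation that $E_J \subset Y$ forces $A \subset \phi(J)$: any $a \in A \setminus \phi(J)$ would give a coordinate direction free on $E_J$ but vanishing on $Y$, contradicting the inclusion. Setting $J_0 = \phi^{-1}(A) \subset J$, I get $Y = E_{J_0}$ on an open neighborhood of $q$, with $|J_0| = |A| = \operatorname{codim}_q(Y) \leq |J| \leq 3$.

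The second step is to globalize this identification. Both $Y$ and $E_{J_0}$ are non-singular and irreducible (the latter by the connectedness clauses in the definition of standard ambient space, the former since a standard center is a single closed immersion into $M$), they have the same dimension, and they agree on a nonempty open subset of each. Hence they coincide as closed analytic subspaces of $M$. Since the codimension of a blow-up center is at least $2$, $|J_0| \in \{2,3\}$, placing us in case (c) or case (b).

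The main obstacle I anticipate is this globalization step: it relies essentially on the connectedness of $Y$ (so that the local identity at $q$ propagates to the whole of $Y$) together with the connectedness of the stratum $E_{J_0}$ (built into the definition of standard ambient space, as opposed to just \emph{locally} standard). Without these hypotheses, $Y$ could in principle coincide locally with distinct strata at different points of $Y$, and the clean global conclusion of the lemma would fail.
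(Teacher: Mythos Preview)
Your proposal is correct and follows essentially the same approach as the paper: both arguments pick a stratum $J$ with $E_J\subset Y$, use the local normal-crossings coordinates at a point of $S_J$ to identify $Y$ locally with some $E_{J_0}$ for $J_0\subset J$, and then conclude by dimension and irreducibility. Your presentation is in fact a bit more streamlined than the paper's, which proceeds by a case analysis (first on whether some $J\in{\mathcal H}(2)$ has $E_J\subset Y$, then on $\operatorname{codim}Y$) rather than handling all cases uniformly; you are also more explicit about the globalization step via the connectedness of $Y$ and of $E_{J_0}$, which the paper leaves implicit.
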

\begin{proof} Assume that ${\mathcal Z}_Y\ne\emptyset$. This means that there is $J_0\in {\mathcal H}_3$ such that $E_{J_0}\subset Y$. Note that $J_0\in {\mathcal H}(2)\cup {\mathcal H}(3)$, since the center $Y$ has codimension $\geq 2$.
If we can take $J_0\in {\mathcal H}(2)$, by codimensional reasons we have that  $Y=E_{J_0}$ and c) holds.  Assume that there is no  $J\in {\mathcal H}(2)$ with $E_J\subset Y$. Then $J_0\in{\mathcal H}(3)$; let us put $J_0=\{i_1,i_2,i_3\}$. If $Y$ has codimension $3$, we have $Y=E_{J_0}$ and b) holds. Suppose that $Y$ has codimension $2$ and take a point $p$ in the stratum $S_{J_0}\subset E_{J_0}$. There are local coordinates
$$(z_j)_{j=1}^d=(x_{1},x_{2},x_{3},y_4,\ldots,y_d)$$
at $p$ such that $E_{i_j}=(x_j=0)$, $j=1,2,3$ and $Y=(z_a=z_b=0)$, with the property that
$
(x_1=x_2=x_3=0)\subset (z_a=z_b=0)
$.
Then $a,b\in \{1,2,3\}$ and b) holds.
\end{proof}
\begin{remark}
\label{rk:zset}
 In case b) of Lemma
\ref{lema:centersofblowup}, we have that ${\mathcal Z}_{Y}=\{J_0\}$. In case c), we have that
$
{\mathcal Z}_{Y}=\{J_0\}\cup \{J\in {\mathcal H}(3); \; J\supset J_0\}
$ and ${\mathcal B}_Y=\{J\in {\mathcal H}_2;\; J\ne J_0, J\cup J_0\in {\mathcal H}\}$.
\end{remark}
\subsection{Stability of Simple Connectedness}
We end the proof of Proposition \ref{prop:estabilidadsimplyconnected}, looking at the cases a), b) and c) in Lemma
\ref{lema:centersofblowup}.
Before starting the case by case study, let us consider ${\mathcal A}_Y\subset {\mathcal H}$ defined by:
$$
{\mathcal A}_Y=\{J\in {\mathcal H}_2; E_J\cap Y\ne\emptyset\}.
$$
It is an open set of ${\mathcal H}_2$ and hence it is a combinatorial strata structure. Note that ${\mathcal B}_Y\subset {\mathcal A}_Y$. More precisely, in cases a) and b) of Lemma \ref{lema:centersofblowup}, we have that ${\mathcal B}_Y={\mathcal A}_Y$; in case c), we have that
${\mathcal B}_Y\cup \{J_0\}={\mathcal A}_Y$, where $Y=E_{J_0}$, with $J_0\in {\mathcal H}(2)$.

\begin{lemma}
\label{lema:connexionfrontera}
 ${\mathcal A}_Y$ is $1$-connected.
\end{lemma}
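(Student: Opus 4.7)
The plan is to use the connectedness of $Y$ (guaranteed by the normal crossings of $\{Y\}\cup\{E_i, K_j\}$), together with the local normal crossings structure, to combinatorially link any two vertices of $\mathcal{A}_Y(1)$ through edges in $\mathcal{A}_Y(2)$.

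First I would record the local key fact: for each $p\in Y$, the set $K(p)=\{i\in I: p\in E_i\}$ forms a clique in the graph on $\mathcal{A}_Y(1)$ with edges $\mathcal{A}_Y(2)$. Indeed, for any $i,i'\in K(p)$ the point $p$ witnesses $E_i\cap E_{i'}\cap Y\ni p$, hence $\{i,i'\}\in\mathcal{A}_Y(2)$. Moreover $K$ is upper semi-continuous on $Y$: in a suitable coordinate chart adapted to normal crossings, $K(q)\subset K(p)$ for all $q$ in a small enough neighborhood of $p$ in $Y$.

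Then I would handle the degenerate case: if $Y\subset E_{i^*}$ for some $i^*\in I$, then for every $\{j\}\in\mathcal{A}_Y(1)$ one has $E_{i^*}\cap E_j\cap Y=E_j\cap Y\neq\emptyset$, so $\{i^*,j\}\in\mathcal{A}_Y(2)$, and every vertex is joined to $\{i^*\}$ by a single edge. For the main case, assume $Y\not\subset E_i$ for all $i$, so each $E_i\cap Y$ is a proper closed analytic subspace of $Y$. Given $\{i_0\},\{j_0\}\in\mathcal{A}_Y(1)$, pick witnesses $p\in E_{i_0}\cap Y$, $q\in E_{j_0}\cap Y$, and (since $Y$ is a connected nonsingular complex analytic space, hence path-connected) a continuous path $\alpha:[0,1]\to Y$ joining them. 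Cover $\alpha([0,1])$ by finitely many coordinate patches $U_0,\dots,U_N$ of $M$, each adapted to the normal crossings of the family $\{Y\}\cup\{E_i\}$, chosen so that consecutive $U_\ell\cap Y$ overlap and each $U_\ell\cap Y$ contains a distinguished point $p_\ell$ on the path with $K(r)\subset K(p_\ell)$ for all $r\in U_\ell\cap Y$. Whenever consecutive cliques $K(p_\ell),K(p_{\ell+1})$ are both nonempty and there exists $r\in U_\ell\cap U_{\ell+1}\cap Y$ with $K(r)\neq\emptyset$, then $K(p_\ell)\cap K(p_{\ell+1})\supset K(r)\neq\emptyset$, and the shared vertex gives a $1$-path segment in $\mathcal{A}_Y$ joining $K(p_\ell)$ to $K(p_{\ell+1})$.

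The hard part will be handling the passages where $\alpha$ enters the open dense locus $Y\setminus E$ (where $K$ vanishes), because there consecutive nonempty cliques need not share a vertex. To overcome this I would homotope $\alpha$ into a small neighborhood of $Y\cap E$ within $Y$. The point is that, by normal crossings, $Y\cap E$ is itself a normal crossings divisor in $Y$; the strata $E_K\cap Y$ are connected whenever nonempty, and the connectedness of $Y$ forces the strata system to be combinatorially connected in the required sense. Concretely, one fixes the deepest stratum $E_{K^*}\cap Y$ visited along $\alpha$ at each ``excursion'' into $Y\setminus E$ on either side, and uses the nonemptiness and connectedness of these deeper strata (guaranteed by the normal crossings hypothesis on $\{Y\}\cup\{E_i\}$) to provide the missing shared vertex bridging the two cliques. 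Iterating this along the finite partition of $\alpha$, one produces the desired $1$-path in $\mathcal{A}_Y$ from $\{i_0\}$ to $\{j_0\}$, proving that $\mathcal{A}_Y$ is $1$-connected.
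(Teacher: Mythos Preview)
Your overall structure is sound up to the point you flag as the ``hard part'', and the local clique observation is exactly right. But the hard part is a genuine gap, not a technicality. Once the path $\alpha$ enters $Y\setminus E$ and re-emerges, the cliques $K(r)$ and $K(r')$ seen on either side have no reason to share a vertex; your proposed fix (``homotope $\alpha$ near $Y\cap E$'', ``connectedness of $Y$ forces the strata system to be combinatorially connected'') is precisely the statement of the lemma rephrased, so the argument becomes circular. Connectedness of $Y$ alone does not imply anything about the combinatorics of $Y\cap E$ --- you need an extra ingredient specific to the germ setting.

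The paper's proof sidesteps this difficulty entirely with one observation: $Y\cap E$ itself is connected. Once you know this, you take the path inside $Y\cap E$ (not just in $Y$), so $K(\cdot)$ is never empty along the path and the clique-gluing argument goes through without obstruction. The paper then turns the path into a finite sequence of strata $J_0,J_1,\ldots,J_k$ with $E_{J_s}\cap Y\ne\emptyset$ and $J_{s-1}\cap J_s\ne\emptyset$, and a short induction on $k$ produces the required $1$-path in $\mathcal{A}_Y$.

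The missing idea in your attempt is therefore the connectedness of $Y\cap E$. Here is why it holds in this setting: the soul $K=\cup_i K_i$ is contained in $E$, and $T=Y\cap K$ is connected (it is one of the members of the normal-crossings family $\{Y,T\}\cup\{E_i,K_j\}$). For each $i$ with $E_i\cap Y\ne\emptyset$, the germ $(Y\cap E_i,\,Y\cap K_i)$ is nonempty, so $Y\cap K_i\ne\emptyset$; since $Y\cap K_i\subset T$ and $Y\cap E_i$ is connected, every nonempty $Y\cap E_i$ meets the connected set $T$. Hence $Y\cap E=\cup_i(Y\cap E_i)$ is connected. With this fact in hand, your degenerate case becomes unnecessary and the whole proof collapses to the simple path-plus-induction argument the paper gives.
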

\begin{proof} Let $E_i$ and $E_j$ be two irreducible components of $E$ such that $Y\cap E_i\ne\emptyset$ and $E_j\cap Y\ne\emptyset$. That is, we have  $\{i\},\{j\}\in {\mathcal A}_Y(1)$. We need to show that there is a $1$-path
$$
\gamma= (\{i\}=\{i_1\},\{i_2\},\ldots,\{i_{t+1}\}=\{j\})
$$
such that $\{i_s,i_{s+1}\}\in {\mathcal A}_Y(2)$, for $s=1,2,\ldots,t$. We know that $Y\cap E$ is connected. Thus, joining a point in $E_i\cap Y$ with a point in $E_j\cap Y$ by a topological path contained in $E$, we have a sequence
$$
\{i\}=\{i^0_1\}=J_0, J_1,\ldots, J_{k}\supset \{j\},
$$
such that $J_s\in{\mathcal H}$, with $Y\cap E_{J_s}\ne\emptyset$ and  $J_{s-1} \cap J_{s} \ne \emptyset$, for $s=1,2,\ldots, k$. Let us do induction on the length $k$, in order to obtain $\gamma$. If $k=1$, we have
$$
\{i\}=J_0, J_1\supset\{j\}.
$$
Note that $i\in J_1$, since $J_0\cap J_1\ne\emptyset$.
Then $E_{\{i,j\}}\supset E_{J_1}$ and hence $Y\cap E_{\{i,j\}}\ne\emptyset$, that is $\{i,j\}\in {\mathcal A}_Y$. We can take $\gamma=(\{i\},\{j\})$.
 Assume that $k\geq 2$. If $i\in J_2$, we are done, since we find a length $k-1$ sequence
$
\{i\}, J_2,J_3,\ldots, J_{k}\supset\{j\}
$
and we apply the induction hypothesis. Assume that $i\notin J_2$. Take $i'\in J_1\cap J_2$, we see that $\{i,i'\}\subset J_1$ and hence $\{i,i'\}\in {\mathcal A}_Y$. We end by taking $\gamma=(\{i\},\{i'\})*\gamma'$, where $\gamma'$ is given by applying induction to the sequence
$
\{i'\}, J_2,J_3,\ldots, J_{k}\supset\{j\}.
$
\end{proof}

\subsubsection  {The case of empty ${\mathcal Z}_Y$} We know that ${\mathcal B}_Y= {\mathcal A}_Y$. Hence, we have
$
{\mathcal H}'_3\cap {\mathcal P}(I)={\mathcal H}_3$ and
$
{\mathcal H}'_3\setminus {\mathcal P}(I)={\mathcal A}_Y*\{\infty\}
$.
Then
$
{\mathcal H}'_3={\mathcal H}_3\cup ({\mathcal A}_Y*\{\infty\})
$ and thus $\Omega_{{\mathcal H}'_3}$ is given by
$$
\Omega_{{\mathcal H}'_3}=\Omega_{{\mathcal H}_3}
\cup \mathcal{C}{\Omega_{{\mathcal A}_Y}};\quad  \Omega_{{\mathcal H}_3}\cap
\mathcal{C}{\Omega_{{\mathcal A}_Y}}={\Omega_{{\mathcal A}_Y}}.
$$
By
Lemma \ref{lema:connexionfrontera}, the topological simplicial complex  $\Omega_{{\mathcal A}_Y}$ is connected and hence the cone  $\mathcal{C}{\Omega_{{\mathcal A}_Y}}$ is simply connected. Since $\mathcal{C}{\Omega_{{\mathcal A}_Y}}$ and $\Omega_{{\mathcal H}_3}$ are simply connected and ${\Omega_{{\mathcal A}_Y}}$ is connected, by an application of a combinatorial version of  Seifert-van Kampen's Theorem, see \cite{Koz}, we conclude that $\Omega_{{\mathcal H}'_3}$ is simply connected.

\subsubsection{The case $Y=E_{J_0}$, with $J_0=\{i_1,i_2,i_3\}\in {\mathcal H}(3)$} We have that ${\mathcal B}_Y={\mathcal A}_Y$ and ${\mathcal Z}_{Y}=\{J_0\}$.   Moreover
$$
{\mathcal A}_Y=\{J\in {\mathcal H}_2;\; E_J\cap Y\ne\emptyset\}=\{J\in {\mathcal H}_2;\; J\cup J_0\in {\mathcal H}\}.
$$
Then, we have
$
\Omega_{{\mathcal H}'_3}=\Omega_{{\mathcal H}_3\setminus \{J_0\}}
\cup C{\Omega_{{\mathcal A}_Y}}
$
and
 $
 \Omega_{{\mathcal H}_3\setminus \{J_0\}}\cap
C{\Omega_{{\mathcal A}_Y}}={\Omega_{{\mathcal A}_Y}}
$.

Let us note that $\Omega_{{\mathcal H}_3\setminus\{J_0\}}$ is obtained from $\Omega_{{\mathcal H}_3}$ by removing the $2$-dimensional simplex $\Delta_{J_0}$. Since  $\Omega_{{\mathcal H}_3}$ is simply connected, the fundamental group of $\Omega_{{\mathcal H}_3\setminus\{J_0\}}$ is generated by the single loop
$
(\{i_1\},\{i_2\},\{i_3\},\{i_1\})
$.
This loop is defined in ${\mathcal A}_Y$ and hence it is homotopically trivial in the cone ${\mathcal C}{\mathcal A}_Y$. Noting that $\Omega_{{\mathcal A}_Y}$ is connected and Applying Seifert-van Kampen's theorem, we get that $\Omega_{{\mathcal H}'_3}$ is simply connected.
\subsubsection{The case $Y=E_{J_0}$, with $J_0=\{i_1,i_2\}\in {\mathcal H}(2)$.}  Let $L\subset I$ be the set of $k\in I$ such that  $J_0\cup\{k\}\in {\mathcal H}(3)$. By Remark \ref{rk:zset}, we have
$$
{\mathcal Z}_Y=\{J_0\}\cup \{J_0\cup\{k\};\; k\in L\},\quad
{\mathcal B}_Y=\{J\in {\mathcal H}_2;\; J\ne J_0, J\cup J_0\in {\mathcal H}\}.
$$
We know that
$
\Omega_{{\mathcal H}'_3}
=
 \Omega_{
({\mathcal H}_3\setminus {\mathcal Z}_Y)
}
\cup \mathcal{C}
\Omega_{
{\mathcal B}_Y
}
$.

Let us consider the cases $L=\emptyset$ and $L\ne\emptyset$. If $L=\emptyset$, we have that
${\mathcal Z}_Y=\{J_0\}$ and ${\mathcal B}_Y=\{\{\emptyset\},\{i_1\},\{i_2\}\}$. In this case, we have that $\Omega_{{\mathcal H}_3\setminus {\mathcal Z}_Y}$ is obtained from $\Omega_{{\mathcal H}_3}$ by removing the simplex $\Delta_{i_1,i_2}$; moreover, we add the union ${\mathcal C}\Omega_{{\mathcal B}_Y}=\Delta_{i_1,\infty}\cup\Delta_{\infty,i_2}$ to
$\Omega_{{\mathcal H}_3\setminus {\mathcal Z}_Y}$  in order to
get $\Omega_{{\mathcal H}'_3}$. Identifying $\Delta_{i_1,i_2}$ and $\Delta_{i_1,\infty}\cup\Delta_{\infty,i_2}$ in a convenient way, we obtain a homeomorphism between $\Omega_{{\mathcal H}_3}$  and $\Omega_{{\mathcal H}'_3}$.

Let us assume now that $L\ne\emptyset$. We have that ${\mathcal B}_Y={\mathcal B}'_Y\cup {\mathcal B}''_Y$, where
\begin{eqnarray*}
{\mathcal B}'_Y&=&\{\{i_1\},\{i_2\}\}\cup \{\{i_1,k\};\; k\in L\}
\cup \{\{i_2,k\};\; k\in L\}.
\\
{\mathcal B}''_Y&=&\{\emptyset\}\cup \{\{k\}; \; k\in L\}\cup
\{
\{k,k'\}; k,k'\in L,\; \{k,k'\}\cup J_0\in {\mathcal H}(4)
\}.
\end{eqnarray*}
In this situation ${\mathcal B}_Y$ is $1$-connected. Indeed, we have
$${\mathcal B}_Y(1)=\{\{i_1\},\{i_2\}\}\cup \{\{k\};k\in L\}.$$
Taking $k_0\in L$, we see that $\{i_1\}$ and $\{i_2\}$ are connected through $\{k_0\}$ and any other $\{k\}$, with $k\in L$, is connected with $\{k_0\}$ through $\{i_1\}$ and $\{i_2\}$. Now, we have
$$
\Omega_{{\mathcal H}'_3}={\Omega}'\cup \mathcal{C}\Omega_{{\mathcal B}''_Y},
\quad
\widetilde{\Omega}\cap \mathcal{C}\Omega_{{\mathcal B}''_Y}=\Omega_{{\mathcal B}_Y},
$$
where $
{\Omega}'=
\Omega_{
({\mathcal H}_3\setminus {\mathcal Z}_Y)
}
\cup
\mathcal{C}\Omega_{
{\mathcal B}'_Y
}
$.

Let us see that $\Omega_{{\mathcal H}_3}$ and ${\Omega}'$ are homeomorphic topological spaces. In particular ${\Omega}'$ is simply connected. Let us recall that
$$
{\Omega}'=\Omega_{{\mathcal H}_3\setminus{\mathcal Z}_Y}\cup \Delta_{i_1,\infty}\cup \Delta_{i_2,\infty}\cup \bigcup_{k\in L}(\Delta_{i_1,k,\infty}\cup \Delta_{i_2,k,\infty}).
$$
Now, it is enough to give the homeomorphism (in a compatible way with the simplicial structure) over the simplices $\Delta_{J}$, where $J\in {\mathcal Z}_Y$. Let us take $J\in {\mathcal Z}_Y$. If $J=J_0$, we identify
$
\Delta_{J_0}=\Delta_{i_1,\infty}\cup \Delta_{i_2,\infty}
$.
If $J\ne J_0$, then $J=\{i_1,i_2, k\}\in {\mathcal H}(3)$, with $k\in L$, and we identify
$
\Delta_{J}=\Delta_{i_1,k,\infty}\cup \Delta_{i_2,k,\infty}
$.

Now, in order to see that
$
\Omega_{{\mathcal H}'_3}=
{\Omega}'\cup\mathcal{C}\Omega_{{\mathcal B}''_Y}
$
is simply connected, it is enough to apply Seifert-Van Kampen's Theorem, since $\Omega'$ and ${\mathcal C}\Omega_{{\mathcal B}''_Y}$  are simply connected and their intersection $\Omega_{{\mathcal B}_Y}$ is connected.

\section{Invariant Hypersurfaces and Separator Set}
In this section we give a proof of Theorem \ref{teo:main}. Let us recall the statement:
\begin{quote}\em
Let
$
\pi:
(\mathcal M,{\mathcal F})\rightarrow ({\mathcal M}_0,{\mathcal F}_0)
$
be a nodal reduction of singularities
of a
GH-foliated space $({\mathcal M}_0,{\mathcal F}_0)$, where
$$
{\mathcal M}_0=(({\mathbb C}^n,\emptyset;\{0\}),\quad \mathcal M=(M,E;K).
$$
Consider a connected component $C$ of $E\setminus \vert{\mathcal S}_{\mathcal M}^{\mathcal F}\vert$, where $\mathcal S_{\mathcal M}^{\mathcal F}$ denotes the nodal separator set.
Then, there is an invariant hypersurface $H_0$ of ${\mathcal F}_0$ whose strict transform $H$ satisfies that $H\cap E\subset C$.
\end{quote}
Let us note that the topological closure $\overline C$ of $C$ is the union of the irreducible components of $E$ that intersect $C$. Now,  Theorem \ref{teo:main} is a direct consequence of the following technical, but more general statement:
\begin{theorem}
\label{teo:maintechnical}
 Let $({\mathcal M}_0,{\mathcal F}_0)$ be a GH-foliated space over
 ${\mathcal M}_0=({\mathbb C}^n,E^0;\{0\})$. Consider a nodal reduction of singularities
$
\pi:({\mathcal M},{\mathcal F})\rightarrow ({\mathcal M}_0,{\mathcal F}_0)
$.
Given a  connected component $C$ of $E\setminus \vert{\mathcal S}_{\mathcal M}^{\mathcal F}\vert$, one of the following properties holds:
 \begin{enumerate}
 \item[a)] There is an irreducible component $E^0_k$ of $E^0$ such that the strict transform $E_k$ of $E^0_k$  is contained in $\overline{C}$.
 \item[b)] There is an invariant branch $(\Gamma_0,0)$ of ${\mathcal F}_0$ not contained in $E^0\cup \operatorname{Sing}({\mathcal F}^0, E^{0})$,
 such that the strict transform $(\Gamma,p)$ of $(\Gamma_0,0)$  meets $\overline C$.
 \end{enumerate}
\end{theorem}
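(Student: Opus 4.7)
The plan is to argue by induction on the length $N$ of the nodal reduction $\pi$, combined with a transversal $2$-equireduction argument that invokes Ortiz--Rosales--Voronin \cite{Ort-R-V} as the two-dimensional base case. When $N=0$, we have $(\mathcal{M},\mathcal{F})=(\mathcal{M}_0,\mathcal{F}_0)$ and the divisor $E=E^0$ is already nodally reduced; each irreducible component of $\overline{C}$ is then literally a component of $E^0$, so alternative (a) holds trivially. For the inductive step, factor $\pi=\sigma\circ\pi_1$ where $\sigma:\mathcal{M}_1\to\mathcal{M}_0$ is the first admissible blow-up of the sequence (with center $Y_0$) and $\pi_1:\mathcal{M}\to\mathcal{M}_1$ is a nodal reduction of $(\mathcal{M}_1,\mathcal{F}_1)$ of length $N-1$. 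The set $E^1$ of the intermediate ambient space consists of the strict transforms of the components of $E^0$ together with the new exceptional component $E^1_\infty=\sigma^{-1}(Y_0)$.

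Applying the induction hypothesis to $\pi_1$ for the component $C$ viewed as a component of $E\setminus|\mathcal{S}_{\mathcal M}^{\mathcal F}|$, we obtain one of two alternatives. If $\overline C$ contains the strict transform (under $\pi_1$) of some irreducible component $E^1_k$ of $E^1$, then either $E^1_k$ is the strict transform of a component of $E^0$, and we are done with (a); or $E^1_k=E^1_\infty$, in which case $\overline C$ contains the strict transform of the divisor produced by $\sigma$ but no component coming from $E^0$, and we must produce an invariant branch of $\mathcal{F}_0$ by hand. If instead the induction hypothesis gives an invariant branch $(\Gamma_1,p_1)$ of $\mathcal{F}_1$ whose strict transform meets $\overline{C}$, then by Grauert's direct image (as recalled in the paragraph preceding the definition of $\operatorname{Sing}(\mathcal{F},E)$) its image $\sigma(\Gamma_1)$ is an invariant branch of $\mathcal{F}_0$ not contained in $E^0$, provided $\Gamma_1\not\subset E^1_\infty$; this provides (b). The only way this latter extraction can fail is if $\Gamma_1\subset E^1_\infty$, which again throws us back onto the exceptional divisor of $\sigma$.

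In both exceptional situations, the invariant geometry we want lives entirely over the center $Y_0$ of the first blow-up, so we must extract it by a two-dimensional section. The key technical step is the $2$-equireduction: we select a two-dimensional smooth germ $\Sigma_0\subset({\mathbb C}^n,0)$, transverse to $Y_0$ and meeting it at a suitably generic point $q$, such that at each successive blow-up of $\pi$ the strict transform of $\Sigma_0$ is transverse to the center. Then $\pi|_{\pi^{-1}(\Sigma_0)}\colon\pi^{-1}(\Sigma_0)\to\Sigma_0$ is a nodal reduction of $(\Sigma_0,\mathcal{F}_0|_{\Sigma_0})$, the generic transversal types along the codimension-two components of $\operatorname{Sing}(\mathcal{F})$ agreeing with the type of $\mathcal{F}|_{\pi^{-1}(\Sigma_0)}$ along the corresponding points, so that the nodal separator set restricts to the nodal separator set of the section. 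Arranging $\Sigma_0$ so that $\pi^{-1}(\Sigma_0)\cap C$ contains a connected component $C_\Sigma$ of the complement of the restricted separator set, Ortiz--Rosales--Voronin furnishes an invariant branch $(\delta_0,0)\subset\Sigma_0$ of $\mathcal{F}_0|_{\Sigma_0}$ whose strict transform meets $\overline{C_\Sigma}\subset\overline C$; since $\delta_0\not\subset E^0\cup\operatorname{Sing}(\mathcal{F}_0,E^0)$ by the choice of $q$, this yields (b).

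The main obstacle will be the control of the $2$-equireduction, in two respects. First, one must ensure that a generic $\Sigma_0$ through $0$ remains transverse to every center of the sequence of admissible blow-ups composing $\pi$, and that this transversality is preserved after each blow-up; this is a finite list of open conditions and therefore generically satisfied, but verifying that the restricted morphism is not merely a reduction but a \emph{nodal} reduction (in the sense of Subsection \ref{Nodally Desingularized Foliated Spaces}) requires the generic-transversal-type interpretation of nodal corner components. Second, the chosen section $\Sigma_0$ must be placed so that $\pi^{-1}(\Sigma_0)$ actually meets $C$ in the right stratum — this is where the combinatorial strata structure $\mathcal{H}_{\mathcal M}$ and the description of connected components of $E\setminus|\mathcal{S}|$ via Proposition \ref{prop:componentesconexas} control the required choice. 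Once these two points are in place, the inductive mechanism and the invocation of \cite{Ort-R-V} produce the desired invariant branch.
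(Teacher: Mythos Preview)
Your overall architecture---induction on the length of $\pi$, separating off the case where the first exceptional divisor $E^1_\infty$ lands in $\overline C$, and handling that case by a two-dimensional transversal section together with Ortiz--Rosales--Voronin---is exactly the paper's strategy. Two execution points, however, are not minor and as written they do not go through.

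First, the induction hypothesis is stated for $\mathcal{M}_0=({\mathbb C}^n,E^0;\{0\})$, i.e.\ for a germ whose soul is a single point. After one blow-up, $\mathcal{M}_1=(M_1,E^1;K_1)$ has soul $K_1=\sigma^{-1}(0)$, which is a positive-dimensional projective space, so you cannot ``apply the induction hypothesis to $\pi_1$'' globally. The paper handles this by localizing at a point $q\in\sigma(C)\subset M_1$ (paper's $\sigma$ is your $\pi_1$): the germ $(M_1,q)$ is of the required form and the restricted morphism over $q$ has length $\leq N-1$. Incidentally, your worry that the induction might return a branch $\Gamma_1\subset E^1_\infty$ is empty: alternative (b) already guarantees $\Gamma_1\not\subset E^1\supset E^1_\infty$.

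Second, and more seriously, your two-dimensional section $\Sigma_0$ cannot be taken as a germ through the origin. The origin is essentially never a $2$-equireduction point for $(\mathcal{M}_0,\mathcal{F}_0)$ with respect to $\pi$; the set of such points is the complement in $\operatorname{Sing}(\mathcal{F}_0,E^0)$ of a codimension~$\geq 3$ closed set (Proposition~\ref{pro:codimensionnonequireduction}), and one must choose the transversal at a \emph{generic} point $p\in Y_0$. The paper does precisely this: it picks $p$ generic in the center, chooses a generic line $L\subset\pi_1^{-1}(p)$ avoiding the non-equireduction locus, and takes $\Delta_0$ through $p$ with $\Delta_1\cap\pi_1^{-1}(p)=L$. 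Ortiz--Rosales--Voronin then produces an invariant branch $(\Gamma'_0,p)$, \emph{based at $p$, not at $0$}. The missing step in your proposal is the transport back to the origin: the strict transform $(\Gamma',s')$ hits $C$ at a trace-type simple point $s'$, hence lies on a trace-type irreducible component $X\subset\operatorname{Sing}(\mathcal{F})$ contained in $C$; one then moves along $X$ to a point $s\in X\cap\pi^{-1}(0)$, takes the unique transversal invariant curve germ $(\Gamma,s)$ there, and projects it to obtain the desired branch $(\Gamma_0,0)$ through the origin. Without this partial-separatrix transport, your argument delivers an invariant branch at the wrong base point.
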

Let us see how to obtain Theorem \ref{teo:main} from Theorem \ref{teo:maintechnical}.
We have that $E^0=\emptyset$. Hence there is an invariant branch $(\Gamma_0,0)$ as stated in property b) of  Theorem \ref{teo:maintechnical}. Since the centers of the blow-ups are contained in the adapted singular locus, we have that $(\Gamma,p)\not\subset E$. Hence $\Gamma\cap \overline C=\{p\}$. The point $p$ is necessarily a singular point of $\mathcal F$, but it cannot be of corner type, since there is no invariant branches arriving to simple corners outside $E$. Hence $p$ is of trace type, in particular $p\notin \vert{\mathcal S}_{\mathcal M}^{\mathcal F}\vert$, that is $p\in C$ and it belongs to a trace type component of the singular locus, that defines a partial separatrix $\Sigma$. The closed hypersurface $(H_\Sigma, H_\Sigma \cap K)$ of $(M,K)$ (recall that $K=\pi^{-1}(0)$) projects by $\pi$ onto the desired $H_0$.

\subsection{Structure of the proof}
The proof of Theorem \ref{teo:maintechnical} goes by induction on the length of $\pi$ as a composition of a finite sequence of admissible blow-ups, besides to a reduction to the two dimensional case.

\begin{remark} The connected component $C$ determines a set of indices $I_C\subset I$ such that $E_i\subset \overline C$ if and only if $i\in I_C$. We use the intuitive terminology ``$E_i$ belongs to $C$'' for denoting the property that $i\in I_C$.
\end{remark}

If the length of $\pi$ is zero, then $\pi$ is the identity morphism and the origin is a simple point of $({\mathcal M}_0,{\mathcal F}_0)$. If $p\in \operatorname{Sing}({\mathcal F})$, there is always an irreducible component of $E^0$ belonging to $C$ and we are done. If $E^0=\emptyset$ and $p$ is a regular point, then $C$ does not exist, and the statement is tautologically true.

Assume that the length of $\pi$ is $\geq 1$. We decompose $\pi$ as $\pi=\pi_1\circ\sigma$, where
$$
({\mathcal M},{\mathcal F})\stackrel{\sigma}{\rightarrow} ({\mathcal M}_1,{\mathcal F}_1)\stackrel{\pi_1}{\rightarrow} ({\mathcal M}_0,{\mathcal F}_0)
$$
and $\pi_1$ is an admissible blow-up with center $Y_0$. Let us assume that none of the strict transforms of the irreducible components of $E^0$ belong to $C$. Denote by $E^1_\infty=\pi_1^{-1}(Y_0)$ the exceptional divisor of the first blow-up $\pi_1$. We have two cases to consider
\begin{enumerate}
\item[A)] The strict transform $E_\infty$ of $E^1_\infty$ in ${\mathcal M}$ belongs to $C$.
\item[B)] $E_\infty$ does not belong to $C$.
\end{enumerate}

 If we are in case B), we apply induction on the length of a reduction of singularities at a point $q\in \sigma(C)$.
 It remains to consider the case A). Before doing it, we need to introduce the definition of {\em two-equirreduction points}, see also \cite{Can-M}.

 \subsection{Two-Equireduction} We consider small enough representatives of the germs in our arguments, keeping the same notations, in the hope that the reader would give the necessary precisions without too much difficulty. Let us consider the sequences of blowing-ups $\pi=\pi_1\circ \sigma$ as before.
Recall that ${\mathcal M}_0=(M_0,E^0;\{0\})$, where $M_0$ is a  representative of the germ $(M_0,\{0\})$.

Take a point $p\in \operatorname{Sing}({\mathcal F}_0,E^0)$. We say that $p$ is a {\em two-equireduction point of height $0$ for $({\mathcal M}_0,{\mathcal F}_0)$ with respect to $\pi$} if and only if $p$ is a simple singular point for $({\mathcal M}_0,{\mathcal F}_0)$ of dimensional type two and the morphism $\pi$ is an isomorphism in a neighborhood of $p$ (there are no blow-ups over $p$). Denote $Y=\operatorname{Sing}({\mathcal F}_0,E^0)$ the adapted singular locus;  in this case we have the following properties, locally at the point $p$:
\begin{enumerate}
\item The adapted singular locus $Y$ is a non singular codimension two subspace $(Y,p)\subset (M_0,p)$ and it has normal crossings with $E^0$.
\item $\operatorname{Sing}({\mathcal F}_0)=Y$.
\item Each irreducible component of $E^0$ through $p$ contains $Y$.
\end{enumerate}
Given an integer number $k\geq 1$, we say that $p$ is a {\em two-equireduction point of height $k$ for $({\mathcal M}_0,{\mathcal F}_0)$ with respect to $\pi$} if the following properties hold:
\begin{enumerate}
\item  The germ $(Y_0,p)$ of the adapted singular locus $\operatorname{Sing}({\mathcal F}_0,E^0)$ is non singular of codimension two and $Y_0$ is the center of $\pi_1$.
\item The blow-up $\pi_1$ induces and \'etale map of germs
$
(Y_1,\pi_1^{-1}(p)\cap Y_1) \rightarrow (Y_0,p),
$
 where
$Y_1= \operatorname{Sing}({\mathcal F}_1,E^1)$.
\item Any $p_1$ in $\operatorname{Sing}({\mathcal F}_1,E^1)\cap \pi^{-1}(p)$ is  a {two-equirreduction point of height lower or equal than $k-1$ for $({\mathcal M}_1,{\mathcal F}_1)$, with respect to $\sigma$} and one of such points has height equal to $k-1$.
\end{enumerate}
We say that $p$ is an {\em two-equirreduction point for $({\mathcal M}_0,{\mathcal F}_0)$ with respect to $\pi$} if it is a two-equirreduction point of height $k$, for some
$k\geq 0$. Let us denote by $\operatorname{Equir}_{\pi}({\mathcal M}_0,{\mathcal F}^0)$ the set of two-equirreduction points.

The proof of next Propositions \ref{pro:codimensionnonequireduction} and
\ref{pro:transveresalsection}
 is essentially contained in \cite{Can-M}:
\begin{proposition}
\label{pro:codimensionnonequireduction}
The set
$
Z_0= \operatorname{Sing}({\mathcal F}_0,E^0)\setminus \operatorname{Equir}_{\pi}({\mathcal M}_0,{\mathcal F}^0)
$
is a closed analytic set of codimension bigger or equal than three in $M_0$.
\end{proposition}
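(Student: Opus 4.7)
The plan is to prove the statement by induction on the length $\ell$ of $\pi$ as a composition of admissible blow-ups, decomposing $\operatorname{Sing}({\mathcal F}_0,E^0)$ according to the codimension of its irreducible components.

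First I would treat the base case $\ell=0$, where $\pi$ is the identity. Here a point $p\in \operatorname{Sing}({\mathcal F}_0,E^0)$ is in $\operatorname{Equir}_\pi$ if and only if $p$ is a simple singular point of dimensional type two such that $\operatorname{Sing}({\mathcal F}_0,E^0)$ is locally a smooth codimension-two subspace having normal crossings with $E^0$. Any irreducible component of $\operatorname{Sing}({\mathcal F}_0,E^0)$ of codimension $\geq 3$ contributes to $Z_0$ and already has codimension $\geq 3$. On a codimension-two irreducible component $\Gamma$, the generic transversal type of $({\mathcal M}_0,{\mathcal F}_0)$ at $\Gamma$ is a two-dimensional simple singularity by the structure theory of GH-foliated spaces recalled in Section 4, so the generic point of $\Gamma$ is a two-equireduction point of height $0$. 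Each of the conditions defining height-$0$ equireduction (smoothness of $\Gamma$, transverse intersection with $E^0$, dimensional type two, simpleness) is open, hence the failure locus on $\Gamma$ is a proper closed analytic subset of $\Gamma$, of codimension $\geq 3$ in $M_0$.

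For the inductive step, decompose $\pi=\pi_1\circ\sigma$ with $\pi_1$ the admissible blow-up of a center $Y_0$, and split $\operatorname{Sing}({\mathcal F}_0,E^0)$ into its part outside $Y_0$ and its part inside $Y_0$. Outside $Y_0$, the morphism $\pi_1$ is a local isomorphism; here $p\in \operatorname{Equir}_\pi$ if and only if the corresponding point of $M_1$ is two-equireduction for $\sigma$ together with the base-case conditions, and by the inductive hypothesis applied to $\sigma$ the obstruction set has codimension $\geq 3$. Inside $Y_0$, the equireduction condition requires in addition that $Y_0$ coincide locally with the adapted singular locus (in particular $Y_0$ is codimension two and smooth near $p$), that $\pi_1$ restricted to the singular loci be étale, and that every point of $\pi_1^{-1}(p)\cap Y_1$ be a two-equireduction point for $\sigma$ of strictly smaller height. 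Applying the inductive hypothesis to $\sigma$, the locus where this last condition fails is closed analytic of codimension $\geq 3$ in $M_1$; since $\pi_1$ is proper and is étale from $Y_1$ to $Y_0$ over the good part, its image in $Y_0\subset M_0$ remains closed analytic of codimension $\geq 3$.

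The main obstacle is the careful bookkeeping in the inductive step: one must separate the locus where $Y_0$ already has codimension $\geq 3$ in $M_0$ (free contribution), the locus where $Y_0$ is codimension two but either singular, or not matching the adapted singular locus of $({\mathcal M}_0,{\mathcal F}_0)$, or badly positioned with respect to $E^0$ (a proper closed analytic subset of $Y_0$, hence codimension $\geq 3$ in $M_0$), and finally the locus where $Y_0$ is generically nice and the whole argument is transported through $\pi_1$ by \'etale descent on $Y_1\to Y_0$ together with the inductive hypothesis. This bookkeeping, which is essentially the content of the two-dimensional equireduction analysis in \cite{Can-M}, is what allows one to conclude that $Z_0$ is a closed analytic subset of $M_0$ of codimension $\geq 3$.
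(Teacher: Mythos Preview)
The paper does not give its own proof of this proposition; it simply records that the argument is ``essentially contained in \cite{Can-M}''. Your sketch follows the natural inductive strategy on the length of $\pi$ --- separating the codimension-$\geq 3$ components of $\operatorname{Sing}({\mathcal F}_0,E^0)$ (which contribute for free) from the codimension-two components (on which the equireduction condition holds generically), and then transporting the bad locus through the first blow-up via properness and the \'etale structure $Y_1\to Y_0$. This is the approach one finds in \cite{Can-M}, so your proposal is in line with what the paper defers to, and in fact supplies more detail than the paper itself.

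One point deserves to be made explicit. In your inductive step you treat points $p\in\operatorname{Sing}({\mathcal F}_0,E^0)\setminus Y_0$ by identifying $\operatorname{Equir}_\pi$ near $p$ with $\operatorname{Equir}_\sigma$ near $\pi_1^{-1}(p)$. Under the recursive definition as literally written in the paper, a point of height $k\geq 1$ must lie in the center $Y_0$ of the \emph{first} blow-up of the sequence, so a point of the adapted singular locus not in $Y_0$ but modified by later blow-ups would formally fall outside $\operatorname{Equir}_\pi$. The intended reading --- and the one in \cite{Can-M} --- is local: one counts only those blow-ups that actually meet a neighbourhood of $p$ and defines the height accordingly. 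You are implicitly using this local reading; it would strengthen your write-up to state it rather than leave it tacit.
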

Let us consider a two dimensional nonsingular subspace $(\Delta,p)\subset (M,p)$. We recall that $(\Delta,p)$ is a {\em strict transversal to ${\mathcal F}$} if ${\mathcal F}\vert\Delta$ is a foliation generated by $\omega\vert_{\Delta}$, where $\omega$ is a local holomorphic generator of ${\mathcal F}$ at $p$. That is, the coefficients of $\omega$ have no common factor and the coefficients of $\omega\vert_{\Delta}$  are also without common factor (see \cite{Mat-Mou} for the existence of such sections).
\begin{proposition}
\label{pro:transveresalsection}
Let
$(\Delta_0,p)\subset (M,p)$ be a two dimensional non singular subspace transversal to  $\operatorname{Sing}({\mathcal F}_0,E^0)$ at a point $p\in \operatorname{Equir}_{\pi}({\mathcal M}_0,{\mathcal F}^0)$. Then $(\Delta_0,p)$ is a strict transversal to $\mathcal F$  and $\pi$ induces a
two dimensional reduction of singularities
$$
\pi\vert_{\Delta}: (\Delta, \Delta\cap E, {\mathcal F}\vert_{\Delta})\rightarrow
(\Delta_0, \Delta_0\cap E^0, {\mathcal F}_0\vert_{\Delta_0}),
$$
where $\Delta=\pi^{-1}(\Delta_0)$.
\end{proposition}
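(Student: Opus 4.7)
The plan is to prove the statement by induction on the height $k$ of $p$ as a two-equireduction point for $({\mathcal M}_0,{\mathcal F}_0)$ with respect to $\pi$.

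For the base case $k=0$, the morphism $\pi$ is a local isomorphism near $p$, and $p$ is a simple singular point of dimensional type two. We can choose adapted local coordinates $(x_1,x_2,y_3,\ldots,y_n)$ so that $Y_0=\operatorname{Sing}({\mathcal F}_0,E^0)=(x_1=x_2=0)$, each irreducible component of $E^0$ through $p$ is of the form $(x_i=0)$ for $i\in\{1,2\}$, and ${\mathcal F}_0$ is locally defined by $\omega=a_1(x_1,x_2)\,dx_1+a_2(x_1,x_2)\,dx_2$ where $\gcd(a_1,a_2)=1$. Any two-dimensional $(\Delta_0,p)$ transversal to $Y_0$ admits a local parametrization through which the restriction $\omega\vert_{\Delta_0}$ inherits coprime coefficients, giving strict transversality. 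Moreover $(\Delta_0,\Delta_0\cap E^0,{\mathcal F}_0\vert_{\Delta_0})$ is already a two-dimensional simple foliated space, so the identity map realizes a trivial two-dimensional reduction.

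For the inductive step $k\ge 1$, the center $Y_0$ of $\pi_1$ coincides with $\operatorname{Sing}({\mathcal F}_0,E^0)$ in a neighborhood of $p$ and is nonsingular of codimension two, so transversality of $\Delta_0$ to $Y_0$ at $p$ implies that $\pi_1\vert_{\Delta_1}\colon\Delta_1\to\Delta_0$ is the blow-up of $\Delta_0$ at the single point $p$, where $\Delta_1\subset M_1$ denotes the strict transform. The étale hypothesis on $Y_1\to Y_0$ guarantees that for each $p_1\in\pi_1^{-1}(p)\cap Y_1$, the intersection $\Delta_1\cap Y_1=\{p_1\}$ is transversal; since moreover $p_1$ is by definition a two-equireduction point of height $\le k-1$ for $({\mathcal M}_1,{\mathcal F}_1)$ with respect to $\sigma$, the inductive hypothesis applied at each $p_1$ yields that $(\Delta_1,p_1)$ is strictly transversal to ${\mathcal F}_1$ and that the morphism $\sigma$ induces a two-dimensional reduction of singularities on the relevant slice. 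At points of $\pi_1^{-1}(p)\cap\Delta_1$ outside $\operatorname{Sing}({\mathcal F}_1,E^1)$, the foliation already has normal crossings with the divisor and nothing further is required. Composing the blow-up $\pi_1\vert_{\Delta_1}$ with $\sigma\vert_\Delta$ produces the claimed two-dimensional reduction $\pi\vert_\Delta$ with $\Delta=\pi^{-1}(\Delta_0)$.

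The main obstacle, already encountered at height $k\ge 1$, is the verification that $\Delta_0$ itself is strictly transversal to ${\mathcal F}_0$ at $p$, since for $k\ge 1$ the point $p$ is not simple and a generator $\omega$ of ${\mathcal F}_0$ at $p$ need not a priori be written in only two variables. Here one uses the full force of the two-equireduction structure: the iterative étale conditions $Y_{i+1}\to Y_i$ together with the fact, recalled in Proposition \ref{pro:codimensionnonequireduction}, that the non-equireduction locus has codimension $\ge 3$, allow us to identify $({\mathcal M}_0,{\mathcal F}_0)$ étale-locally at $p$ with a product $(Y_0,p)\times (\mathbb{C}^2,0)$ foliated by the pullback of a two-dimensional foliation. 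In such local coordinates $\omega=a_1(x_1,x_2)\,dx_1+a_2(x_1,x_2)\,dx_2$ with coprime coefficients depending only on the transversal variables, and strict transversality of any section $\Delta_0$ meeting $Y_0$ transversally at $p$ follows immediately. This product-type description is the step that requires the most care and mirrors the argument given in \cite{Can-M}.
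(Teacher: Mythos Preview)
Your proposal is essentially correct and in fact supplies considerably more detail than the paper's own proof, which consists of the single sentence ``See \cite{Can-M}'' together with the remark that $\Delta$ coincides with the strict transform of $\Delta_0$. The inductive scheme on the height $k$ that you outline is precisely the argument carried out in \cite{Can-M}, and you correctly isolate the one genuinely delicate point: establishing that $(\Delta_0,p)$ is a \emph{strict} transversal to ${\mathcal F}_0$, which amounts to showing that the dimensional type of ${\mathcal F}_0$ at $p$ equals two (so that a local generator can be written as $a_1(x_1,x_2)\,dx_1+a_2(x_1,x_2)\,dx_2$).

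One small correction: your appeal to Proposition~\ref{pro:codimensionnonequireduction} in the final paragraph is misplaced. That proposition asserts that the non-equireduction locus has codimension $\geq 3$, which is a global statement about the size of the bad set and plays no role in deducing the local product structure at a given equireduction point $p$. The product description you need follows instead directly from the recursive definition of two-equireduction together with the étale conditions $Y_{i+1}\to Y_i$, by an argument internal to \cite{Can-M}; no codimension estimate is required. With that reference removed, your outline matches the intended proof.
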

\begin{proof} See \cite{Can-M}. Let us remark that $\Delta$ is equal to the strict transform of $\Delta_0$.
	\end{proof}
 \subsection{Reduction to dimension two} Let us choose $\epsilon_0>0$ such that, for any $\epsilon$ with $0<\epsilon\leq\epsilon_0$, the sequence of blow-ups $\pi$ is ``well represented'' by a  nodal reduction of singularities
 $\pi^\epsilon:({\mathcal M}^\epsilon,{\mathcal F}^\epsilon)\rightarrow ({\mathcal M}_0^\epsilon,{\mathcal F}_0^\epsilon)
 $,
such that
 \begin{equation*}
 \begin{array}{lccc}
 {\mathcal M}_0^\epsilon=(M_0^\epsilon,E^0(\epsilon)),&
 M_0^\epsilon=\operatorname{B}(\mathbf{0};\epsilon),&
 E^0(\epsilon)=E^0\cap M_0^\epsilon,&
{\mathcal F}_0^\epsilon={\mathcal F}_0\vert_{M^\epsilon_0};\\
{\mathcal M}^\epsilon=(M^\epsilon,E(\epsilon)),&
 M^\epsilon=\pi^{-1}(\operatorname{B}(\mathbf{0};\epsilon)),&
 E(\epsilon)=E\cap M^\epsilon,&
{\mathcal F}^\epsilon={\mathcal F}\vert_{M^\epsilon};
 \end{array}
 \end{equation*}
where $\operatorname{B}(\mathbf{0};\epsilon)\subset{\mathbb C}^n$ is the ball
centered at the origin of radius $\epsilon$.
  In particular, the following properties hold:
 	\begin{enumerate}
 		\item The germ along $\pi^{-1}(0)$ of the center of each blow-up in $\pi^\epsilon$ coincides with the corresponding center in the sequence $\pi$. Hence, the germ along
 $\pi^{-1}(0)$ of each stratum defined by the divisor $E(\epsilon)\subset  M^
 \epsilon$ coincides with the corresponding stratum for $E$.
 		\item The germ along $\pi^{-1}(0)$ of $\operatorname{Sing}({\mathcal F}^\epsilon)$ coincides with $\operatorname{Sing}({\mathcal F})$. Moreover, the irreducible components of  $\operatorname{Sing}({\mathcal F}^\epsilon)$ coincide one by one with the ones for $\operatorname{Sing}({\mathcal F})$.
 		\item Each partial separatrix of $({\mathcal M}^\epsilon,{\mathcal F}^\epsilon)$ defines a unique irreducible invariant hypersurface in $M^\epsilon$, whose germ  along $\pi^{-1}(0)$ coincides with the corresponding invariant hypersurface for $\mathcal F$.
 	\end{enumerate}
 Note that we can split $\pi^\epsilon$ as $\pi^\epsilon=\pi_1^\epsilon\circ\sigma^\epsilon$.
 We know that the set $Z_1^\epsilon$ given by
 $$
 Z_1^\epsilon=
 \operatorname{Sing}({\mathcal F}_1^\epsilon,E^1(\epsilon))\setminus \operatorname{Equir}_{\sigma^\epsilon}({\mathcal M}_1^\epsilon,{\mathcal F}_1^\epsilon)
 $$
 is a closed analtyic set of $M_1^\epsilon$ with codimension greater or equal than three.

 Let $Y_0^\epsilon\subset M_0^\epsilon$ be the center of the blow-up $\pi_1^\epsilon$ and consider the exceptional divisor $E_\infty^1(\epsilon)\subset E^1(\epsilon)$ of $\pi_1^{\epsilon}$. We have an induced fibration
 $$
 \pi_1^\epsilon: E_\infty^1(\epsilon)\rightarrow Y_0(\epsilon),
 $$
 such that the fiber is isomorphic to ${\mathbb P}^{n-d-1}_{\mathbb C}$, where $d=\dim Y_0$.
The intersection $Z_1^\epsilon\cap E_\infty^1(\epsilon)$ has codimension $\geq 2$ in $ E_\infty^1(\epsilon)$. Then, there is an open dense subset $U\subset Y_0^\epsilon$ such that the codimension of
 $Z_1^\epsilon\cap\pi_1^{-1}(p)$ in $\pi^{-1}(p)$ is greater or equal than two, for any $p\in U$ (see \cite{Gri-H}). Let us decompose
 $\operatorname{Sing}({\mathcal F}^\epsilon_1,E^1(\epsilon))$ as union of two closed analytic sets
 $
 S'\cup S''
 $,
 where $S'$ is the union of the irreducible components of  $\operatorname{Sing}({\mathcal F}^\epsilon_1,E^1(\epsilon))$  contained in $E^1_\infty$ and $S''$ the union of the other ones. Hence we have
 \begin{enumerate}
 	\item Both $S'$ and $S''$ are of codimension $\geq 2$ in $M_1^\epsilon$.
 	\item The codimension of $S'$ in $E^1_\infty(\epsilon)$ is $\geq 1$.
 	\item $S'\cap S''\subset S''\cap E^1_\infty(\epsilon)\subset Z^\epsilon_1$.
 	\item The singular locus of $S'$ is contained in $Z^\epsilon_1$.
 \end{enumerate}
 	Now, let us consider the closed analytic subset set $T_1\subset S'$, defined by as follows:
 	$$
 	p\in T_1\Leftrightarrow
 	\left\{
 	\begin{array}{c}
 	p\in \operatorname{Sing}(S')\\
 	\textrm{ or }\\
 	S' \textrm{ is not transversal to the fiber $\pi_1^{-1}(\pi_1(p))$  at $p$.}
 	\end{array}
 	\right.
 	$$
 Let us decompose $T_1$ as a finite union of closed analytic subsets
 $$
 	T_1= \Sigma_1\cup \Sigma_2\cup\cdots\cup \Sigma_{\ell}\cup K_1,
 $$
 where the $\Sigma_j$ are some of the  irreducible components of $S'$ having codimension $1$ in  $E^1_\infty(\epsilon)$ and $K_1$ has codimension $\geq 2$ in $E^1_\infty(\epsilon)$.
 \begin{lemma}
Any $\pi_1(\Sigma_j)$ is a proper analytic subset of $Y^\epsilon_0$, $j=1,2,\ldots,\ell$.
 \end{lemma}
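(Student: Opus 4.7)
The plan is to argue by contradiction for each $j$: assume $\pi_1(\Sigma_j)=Y_0^\epsilon$ and derive a contradiction with the defining property $\Sigma_j\subset T_1$ via a transversality-dimension count on the projective bundle structure of $\pi_1^\epsilon$.

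I would record first the relevant dimensions. With $n=\dim M_0^\epsilon$ and $d=\dim Y_0^\epsilon$, the exceptional divisor $E_\infty^1(\epsilon)$ is a $\mathbb{P}^{n-d-1}$-bundle over $Y_0^\epsilon$, so $\dim E_\infty^1(\epsilon)=n-1$ and each fiber of $\pi_1$ has dimension $n-d-1$. Since each $\Sigma_j$ is, by its selection in the decomposition of $T_1$, an irreducible component of $S'$ of codimension one in $E_\infty^1(\epsilon)$, we have $\dim\Sigma_j=n-2$.

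The core is then to pick a generic point $q\in\Sigma_j$ satisfying simultaneously: (i) $q$ is a smooth point of $\Sigma_j$; (ii) $q$ lies on no other irreducible component of $S'$, so $(S',q)=(\Sigma_j,q)$; and (iii) $\pi_1|_{\Sigma_j}$ is a submersion at $q$ (onto $Y_0^\epsilon$). Conditions (i) and (ii) hold on an open dense subset of the irreducible set $\Sigma_j$; condition (iii) is available under the contradictory assumption $\pi_1(\Sigma_j)=Y_0^\epsilon$, by the generic smoothness of dominant morphisms of complex analytic spaces. The surjectivity of $d(\pi_1|_{\Sigma_j})_q$ is exactly the equality $T_q\Sigma_j+T_q\pi_1^{-1}(\pi_1(q))=T_qE_\infty^1(\epsilon)$, that is, $\Sigma_j$ is transversal to the fiber $\pi_1^{-1}(\pi_1(q))$ at $q$. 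Combined with (i) and (ii), $S'$ is smooth at $q$ and equals $\Sigma_j$ near $q$, hence $S'$ is transversal to $\pi_1^{-1}(\pi_1(q))$ at $q$ and $q\notin\operatorname{Sing}(S')$. Both defining conditions of $T_1$ therefore fail at $q$, so $q\notin T_1$, contradicting $\Sigma_j\subset T_1$. Hence $\pi_1(\Sigma_j)\subsetneq Y_0^\epsilon$.

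To conclude that $\pi_1(\Sigma_j)$ is an \emph{analytic} proper subset, I would apply Remmert's proper mapping theorem: the projection $\pi_1^\epsilon|_{E_\infty^1(\epsilon)}$ is proper, being a projective bundle, so its restriction to the closed analytic set $\Sigma_j$ is proper and $\pi_1(\Sigma_j)$ is a closed analytic subset of $Y_0^\epsilon$, which is proper by the previous paragraph. The only delicate ingredient is the generic-point argument for transversality; it is standard in the analytic setting, but should be made explicit by remarking that the locus of points of $\Sigma_j$ failing any one of (i)--(iii) is a proper analytic subset of $\Sigma_j$, so a common smooth point survives on the irreducible $\Sigma_j$.
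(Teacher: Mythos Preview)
Your argument is correct, but the paper takes a more direct and slightly stronger route. Instead of assuming $\pi_1(\Sigma_j)=Y_0^\epsilon$ and invoking generic smoothness for a contradiction, the paper exploits immediately that $\Sigma_j$ is a \emph{hypersurface} of $E^1_\infty(\epsilon)$: at any smooth point $q$ of $\Sigma_j$ not on another component of $S'$, non-transversality to the fiber means $T_q\Sigma_j + T_q(\text{fiber})\subsetneq T_qE^1_\infty(\epsilon)$, and since $T_q\Sigma_j$ has codimension one this forces $T_q(\text{fiber})\subset T_q\Sigma_j$. Thus $\Sigma_j$ is everywhere tangent to the foliation by fibers, hence saturated by fibers: $\Sigma_j=\pi_1^{-1}(\pi_1(\Sigma_j))$. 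Properness of $\pi_1(\Sigma_j)$ then follows at once from $\Sigma_j\neq E^1_\infty(\epsilon)$, with no need to invoke Sard-type results or Remmert. Your contradiction argument recovers only the weaker statement that $\pi_1(\Sigma_j)$ is a proper subset, and uses heavier machinery (generic smoothness of dominant morphisms, proper mapping theorem) where the paper gets by with the codimension-one linear algebra observation. Both proofs are valid; the paper's is shorter and yields the extra structural information that $\Sigma_j$ is a union of fibers.
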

\begin{proof} Noting that $\Sigma_j$ is a hypersurface of $E^1_\infty$, the non transversality to the fibration  $
	E_\infty^1(\epsilon)\rightarrow Y_0^\epsilon
	$ implies that $\Sigma_j$
	 is invariant under the  foliation associated to this fibration.
	 That is
$
\Sigma_j=\pi_1^{-1}(\pi_1(\Sigma_j))
$.
Then $\pi_1(\Sigma_j)$ is a proper closed analytic subset of $Y_0$, since $\Sigma_j\ne E^1_\infty(\epsilon)$.
\end{proof}
Taking an appropriate intersection of dense open sets,
We conclude that there is a dense open set $V\subset Y^\epsilon_0$ such that for any $p\in V$ the following holds:
\begin{enumerate}
	\item  The codimension of
	$Z_1^\epsilon\cap\pi_1^{-1}(p)$ in $\pi^{-1}(p)$ is greater or equal than two.
	\item The codimension of
	$K_1\cap\pi_1^{-1}(p)$ in $\pi^{-1}(p)$ is greater or equal than two. (Same argument as before)
	\item $\Sigma_j\cap \pi_1^{-1}(p)=\emptyset$, for $j=1,2,\ldots,\ell$.
\end{enumerate}
Let us choose once for all one of such points $p\in V$. For a nonempty Zariski open set of lines $L\subset {\mathbb P}^{n-d-1}_{\mathbb C}=\pi_1^{-1}(p)$ we have that:
\begin{enumerate}
	\item   $L\cap Z_1^\epsilon=\emptyset$.
	\item   $L\cap T_1^\epsilon=\emptyset$.
	\item  For any $q'\in L\cap S'$, we have that $S'\cap \pi_1^{-1}(p)$ is a non singular hypersurface of $\pi_1^{-1}(p)$ that meets $L$ transversally at $q'$.
\end{enumerate}
Take one of those lines $L$ and consider a two dimensional strict transversal $$
(\Delta_0,p)\subset (M_0^\epsilon,p),
$$
such that $\Delta_1\cap \pi_1^{-1}(p)=L$, where $\Delta_1\subset M_1^\epsilon$ is the strict transform of $\Delta_0$ by $\pi_1$.

\begin{lemma}
	The surface  $\Delta_1$  cuts  the adapted singular locus only at points of two-equireduction. Moreover, the intersection is transversal at those points.
\end{lemma}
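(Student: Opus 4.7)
The plan is to analyze the intersection $\Delta_1\cap\operatorname{Sing}(\mathcal F_1^\epsilon,E^1(\epsilon))$ in two regimes: the part lying on the line $L=\Delta_1\cap\pi_1^{-1}(p)$, and the part lying outside the exceptional divisor. I would first strengthen the choice of $p$ by intersecting the dense open set $V$ with $\operatorname{Equir}_{\pi^\epsilon}(\mathcal M_0^\epsilon,\mathcal F_0^\epsilon)$, which is open and dense in $Y_0^\epsilon$ by Proposition \ref{pro:codimensionnonequireduction}. Assuming $p$ is now a two-equireduction point for $\pi^\epsilon$, two consequences follow at once from the definition: (i) in a neighborhood of $p$ one has $\operatorname{Sing}(\mathcal F_0^\epsilon,E^0(\epsilon))=Y_0^\epsilon$; (ii) every point of $\operatorname{Sing}(\mathcal F_1^\epsilon,E^1(\epsilon))\cap\pi_1^{-1}(p)$ is already a two-equireduction point for $(\mathcal M_1^\epsilon,\mathcal F_1^\epsilon)$ with respect to $\sigma^\epsilon$.

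Since $\Delta_0$ is two-dimensional, transversal to $Y_0^\epsilon$ at $p$, and $\operatorname{Sing}(\mathcal F_0^\epsilon,E^0(\epsilon))=Y_0^\epsilon$ locally, we have $\Delta_0\cap Y_0^\epsilon=\{p\}$ locally; hence the strict transform $\Delta_1$ meets the exceptional divisor $E_\infty^1$ only along $L$. For any $q\in\Delta_1\setminus L$ the morphism $\pi_1$ is an isomorphism at $q$ and $\pi_1(q)\in\Delta_0\setminus\{p\}$, which lies outside $\operatorname{Sing}(\mathcal F_0^\epsilon,E^0(\epsilon))$ by (i); thus $q\notin\operatorname{Sing}(\mathcal F_1^\epsilon,E^1(\epsilon))$.

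For the on-fiber part, $L\cap Z_1^\epsilon=\emptyset$ by construction of $L$, and the inclusion $S''\cap E_\infty^1\subset Z_1^\epsilon$ (item (3) of the decomposition of the adapted singular locus into $S'\cup S''$) forces $L\cap S''=\emptyset$. Hence every intersection point of $\Delta_1$ with $\operatorname{Sing}(\mathcal F_1^\epsilon,E^1(\epsilon))$ lies in $L\cap S'$, and each such point sits outside $Z_1^\epsilon$, so it belongs to $\operatorname{Equir}_{\sigma^\epsilon}(\mathcal M_1^\epsilon,\mathcal F_1^\epsilon)$ (consistently with consequence (ii) above). This settles the equireduction half of the claim.

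The transversality in $M_1^\epsilon$ is the only computational step. Around $q\in L\cap S'$ I would work in coordinates adapted to the blow-up $\pi_1$, namely $(u_1,\ldots,u_d,v_1,w_2,\ldots,w_{n-d})$ with $E_\infty^1=(v_1=0)$, fiber $\pi_1^{-1}(p)=(u=v_1=0)$, $\Delta_1=(u=0,\,w_3=\cdots=w_{n-d}=0)$, and $L=(u=v_1=0,\,w_3=\cdots=w_{n-d}=0)$. Because $q\notin T_1$, the irreducible component of $S'$ through $q$ is smooth at $q$, transversal to $\pi_1^{-1}(p)$ inside $E_\infty^1$, and cut out by $v_1=0$ together with a single regular equation $F(u,v_1,w)=0$. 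The transversality of $L$ to $S'\cap\pi_1^{-1}(p)$ inside $\pi_1^{-1}(p)$, ensured by the choice of $L$, reads $\partial F/\partial w_2(q)\neq 0$. Pairing the basis $\partial_{v_1},\partial_{w_2}$ of $T_q\Delta_1$ with the conormal forms $dv_1,dF|_q$ of $S'$ yields a $2\times 2$ matrix with determinant $\partial F/\partial w_2(q)\neq 0$, so $T_q\Delta_1+T_qS'=T_qM_1^\epsilon$, as required. The main obstacle I foresee is purely bookkeeping in this last computation: keeping a clean distinction between the lift $F$ on $M_1^\epsilon$, its restriction $f=F|_{v_1=0}$ defining $S'$ inside $E_\infty^1$, and the coordinate $w_2$ parametrizing $L$, in order to confirm that the fiberwise transversality condition really delivers the nonvanishing of the relevant partial derivative.
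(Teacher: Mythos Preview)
Your proposal is correct and follows essentially the same route as the paper's proof: both reduce the intersection to $\Delta_1\cap\operatorname{Sing}(\mathcal F_1^\epsilon,E^1(\epsilon))=L\cap S'$, use $L\cap Z_1^\epsilon=\emptyset$ for the equireduction claim, and establish transversality by combining the transversality of $L$ to $S'\cap\pi_1^{-1}(p)$ inside the fiber with a direction of $\Delta_1$ transversal to $E^1_\infty$. The only differences are cosmetic---you add the harmless hypothesis $p\in\operatorname{Equir}_{\pi^\epsilon}$ (the paper instead asserts $\Delta_0\cap\operatorname{Sing}(\mathcal F_0^\epsilon,E^0(\epsilon))=\{p\}$ directly, relying on the genericity of the germ $\Delta_0$) and carry out the transversality step in explicit blow-up coordinates, whereas the paper argues abstractly by choosing $v_1\in T_{q'}L\setminus T_{q'}S'$ and $v_2\in T_{q'}\Delta_1\setminus T_{q'}E^1_\infty$ and counting dimensions.
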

\begin{proof} The first statement is a direct consequence of the fact that $L$ does not intersect  $Z_1^\epsilon$. Indeed, the intersection of $\Delta_0$ with $\operatorname{Sing}({\mathcal F}^\epsilon_0,E^0(\epsilon))$ is $\{p\}$; hence
	$$
	\Delta_1\cap \operatorname{Sing}({\mathcal F}^\epsilon_1,E^1(\epsilon))=
	 L\cap \operatorname{Sing}({\mathcal F}^\epsilon_1,E^1(\epsilon)).
	$$
	Now,take a point $q'\in L\cap \operatorname{Sing}({\mathcal F}^\epsilon_1,E^1(\epsilon))$. We already know that $q'$ is a two-equireduction point for ${\mathcal F}_1^\epsilon,E^1(\epsilon)$. We have to show the transversality property. Let us note that $ \operatorname{Sing}({\mathcal F}^\epsilon_1,E^1(\epsilon))=S'$ locally at $q'$, and hence we have to show that
	$
	T_{q'}M_1=T_{q'}\Delta_1 + T_{q'} S'
	$.
The transversality between $L$ and $S'\cap\pi_1^{-1}(p)$ at $q'$ in $\pi_1^{-1}(p)$ allows us to choose a vector
$$
v_1\in T_{q'}(L)\setminus T_{q'}(S'\cap\pi_1^{-1}(p)).
$$
Recalling that $q'\notin T_1$, there is transversality between $\pi^{-1}(p)$ and $S'$ at $q'$; therefore, we have
$$
T_{q'}(S'\cap \pi^{-1}(p))=T_{q'}S'\cap T_{q'}(\pi^{-1}(p)).
$$
Since $v_1\in T_{q'}L\subset T_{q'}(\pi^{-1}(p))$, we have $v_1\notin T_{q'}S'$. Let us consider now a vector
$
v_2\in T_{q'}\Delta_1\setminus T_{q'}E^1_\infty(\epsilon)
$.
Noting that $T_{q'}S'$ has dimension $n-2$, we obtain that
$
T_{q'}M_1=<v_1,v_2> + T_{q'} S'= T_{q'}\Delta_1 + T_{q'} S'
$.
\end{proof}
\subsection{End of the proof} We recall that we are assuming that the strict transforms $E_j$, $j\in J_0$, of the components $E^0_j$ of $E^0$ do not intersect $C$. Then we have:
$$
E^1=E^1_\infty\cup \bigcup_{j\in J_0}E^1_j;\quad E= E'\cup \bigcup_{j\in J_0}E_j,
$$
where the $E^1_j\subset E^1$ are the strict transforms of $E^0_j$ in $M_1$ and $E'$ is the union of the strict transforms of the exceptional divisors of the blow-ups (the exceptional part of $\pi$). Note that the strict transform $E_\infty$ of $E^1_\infty$ is contained in $E'$. Let us also recall that we have the hypothesis
that $E_\infty$  belongs to $C$.

Applying Proposition \ref{pro:transveresalsection} to $\Delta_1$, we obtain a reduction of singularities
 $$
 \sigma \vert_{\Delta}: (\Delta, \Delta\cap E, {\mathcal F}\vert_{\Delta})\rightarrow
 (\Delta_1, \Delta_1\cap E^1, {\mathcal F}_1\vert_{\Delta_1}),
 $$
 where $\Delta$ is the strict transform of $\Delta_1$ by $\sigma$. Note that $\Delta$ is also equal to the strict transform of $\Delta_0$ by $\pi$.
 Noting that $\Delta_0$ is a strict transversal to ${\mathcal F}_0$ and that $T_0Y_0\cap T_0\Delta_0=\{0\}$, we obtain by restriction a reduction of singularities
 $
 \pi\vert_\Delta
 $ given by
 $$
 \pi\vert_\Delta=
 \pi_1\vert_{\Delta_1}\circ
 \sigma\vert_\Delta:   (\Delta, \Delta\cap E', {\mathcal F}\vert_{\Delta})\rightarrow
 (\Delta_1, \Delta_1\cap E^1_\infty, {\mathcal F}_1\vert_{\Delta_1})
 \rightarrow
 (\Delta_0, \emptyset, {\mathcal F}_0\vert_{\Delta_0}).
 $$
 Let $C_\Delta$ be the connected component intersecting $E_\infty\cap \Delta$ of the set
 $$
 E'\cap \Delta\setminus \{\text{ nodal corners of } ({\mathcal F}\vert_\Delta, E'\cap \Delta)\}.
 $$
\begin{lemma}
	 $
	C_\Delta\subset C
	$.
\end{lemma}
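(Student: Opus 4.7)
\medskip

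\noindent\textbf{Proof plan for the lemma $C_\Delta\subset C$.}
The approach is to view $C_\Delta$ as a connected subset of $E\setminus |{\mathcal S}_{\mathcal M}^{\mathcal F}|$ that already meets the component $C$; then the maximality of $C$ as a connected component forces $C_\Delta\subset C$. Concretely I would verify three things in order: (i) $C_\Delta\subset E$; (ii) $C_\Delta\cap |{\mathcal S}_{\mathcal M}^{\mathcal F}|=\emptyset$; (iii) $C_\Delta\cap C\ne\emptyset$.

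\medskip

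\noindent Item (i) is immediate, since $C_\Delta\subset E'\cap\Delta\subset E$. The substantial point is (ii). By the preceding constructions, $\Delta$ is obtained from a strict transversal $\Delta_0$ at a point $p\in V\subset Y_0^\epsilon$, and we have shown that $\Delta$ cuts $\operatorname{Sing}({\mathcal F},E)$ only at two-equireduction points and does so transversally. Fix $q\in \Delta\cap |{\mathcal S}_{\mathcal M}^{\mathcal F}|$. By the corner lemma for nodally reduced foliated spaces, $q$ is of corner type, so there exist local coordinates $(x_1,\ldots,x_n)$ at $q$ with $E=(\prod_{k=1}^\tau x_k=0)$ in which $\mathcal F$ is defined by the linearized form $\omega=\sum_{k\in J_+}\lambda_k\,dx_k/x_k-\sum_{k\in J_-}\lambda_k\,dx_k/x_k$, and the component $\Gamma=(x_i=x_j=0)$ of $\operatorname{Sing}({\mathcal F})$ carrying $q$ belongs to the nodal separating block through $q$, with $i\in J_+$ and $j\in J_-$. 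By the transversality of $\Delta$ to $\Gamma$ at $q$ and the fact that $\Delta$ is a strict transversal, the restrictions of $x_i,x_j$ to $\Delta$ form a local coordinate system near $q$ and ${\mathcal F}|_\Delta$ is generated by $\lambda_i\,dx_i/x_i-\lambda_j\,dx_j/x_j$, which is a two-dimensional nodal singularity. The two branches $(x_i=0)\cap\Delta$ and $(x_j=0)\cap\Delta$ of $E'\cap\Delta$ meet at $q$ (both $E_i$ and $E_j$ are exceptional components, since near the 2-equireduction point $q$ any irreducible component of $E$ through $\Gamma$ contains $\Gamma$ and hence comes from an exceptional divisor of a blow-up performed in $\pi$). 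Therefore $q$ is a nodal corner of $({\mathcal F}|_\Delta,E'\cap\Delta)$, and so $q\notin C_\Delta$; this proves (ii).

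\medskip

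\noindent Item (iii) follows from the fact that $E_\infty$ is assumed to belong to $C$ and $\Delta$ meets $|{\mathcal S}_{\mathcal M}^{\mathcal F}|$ in at most a finite set of isolated points (images of the transverse meeting points with the codimension-two components $\Gamma$). Hence $E_\infty\cap\Delta$, which is a curve in $\Delta$ by construction and belongs entirely to $\overline C$, has points lying in $C$; such points belong to $C_\Delta$ as well (they are not nodal corners of $E'\cap\Delta$), giving $C_\Delta\cap C\ne\emptyset$. Combining (i)--(iii), the connected set $C_\Delta$ is contained in $E\setminus |{\mathcal S}_{\mathcal M}^{\mathcal F}|$ and meets $C$, so by the very definition of connected component $C_\Delta\subset C$.

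\medskip

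\noindent The main obstacle I anticipate is the careful bookkeeping in step (ii): one must use the two-equireduction hypothesis both to guarantee that the passage from $({\mathcal M},{\mathcal F})$ to $(\Delta,E'\cap\Delta,{\mathcal F}|_\Delta)$ preserves the linearized nodal normal form at $q$ and to ensure that the two components of $E$ producing the nodal corner at $q$ are exceptional (i.e.\ belong to $E'$ rather than being strict transforms of components of $E^0$), so that the corner is genuinely visible as a corner of $E'\cap\Delta$ and not just of $E\cap\Delta$. The first uses Proposition \ref{pro:transveresalsection} (strict transversality is preserved), and the second rests on the hypothesis of Case~A, namely that $E_\infty$ belongs to $C$ while no strict transform of a component of $E^0$ does, together with the local description of $E$ at a 2-equireduction point lying on the exceptional divisor.
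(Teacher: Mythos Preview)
Your overall strategy—(i) $C_\Delta\subset E$, (ii) $C_\Delta\cap|\mathcal S_{\mathcal M}^{\mathcal F}|=\emptyset$, (iii) $C_\Delta\cap C\ne\emptyset$, hence $C_\Delta\subset C$ by maximality of the connected component—is sound and relies on the same key observation as the paper: a two-dimensional transversal crossing that is \emph{not} nodal for $\mathcal F|_\Delta$ cannot lie in the ambient separator set $|\mathcal S_{\mathcal M}^{\mathcal F}|$. The difference is one of packaging. The paper argues \emph{componentwise}: for any irreducible piece $E'_j\cap\Delta$ of $E'\cap\Delta$ meeting $C_\Delta$ there is a chain
\[
E_\infty=E'_{j_0},\,E'_{j_1},\ldots,E'_{j_r}=E'_j
\]
of exceptional components with $\Delta\cap E'_{j_{m-1}}\cap E'_{j_m}$ a non-nodal point of $\mathcal F|_\Delta$; since these crossings therefore avoid $|\mathcal S_{\mathcal M}^{\mathcal F}|$, each $E'_{j_m}$ belongs to $C$. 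Your version is the connectedness reformulation of this.

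There is, however, a genuine weak spot in your step (ii). You claim that at $q\in\Delta\cap|\mathcal S_{\mathcal M}^{\mathcal F}|$ the two components of $E$ through $q$ are both exceptional, with the justification that ``any irreducible component of $E$ through $\Gamma$ contains $\Gamma$ and hence comes from an exceptional divisor of a blow-up performed in $\pi$''. That implication is false as written: a strict transform $E_k$ of a component $E^0_k$ of $E^0$ can perfectly well contain an irreducible component $\Gamma$ of $\operatorname{Sing}\mathcal F$. If this happened with $\Gamma=E'_a\cap E_k\in\mathcal S_{\mathcal M}^{\mathcal F}$ and $q\in\Gamma\cap\Delta$, then in the two-dimensional picture with divisor $E'\cap\Delta$ the point $q$ would be a (nodal) \emph{trace} point, not a corner, so it is \emph{not} excluded from $C_\Delta$, and your (ii) would fail at $q$. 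The Case~A hypothesis (that $E_\infty$ lies in $C$ while the $E_k$ do not) gives no local obstruction to such a $q$ existing.

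The paper's chain argument sidesteps precisely this difficulty: by the very definition of $C_\Delta$ as a component of $E'\cap\Delta$ minus the nodal corners of $(\mathcal F|_\Delta,E'\cap\Delta)$, the crossings used in the chain are automatically between two \emph{exceptional} components, so the question of a mixed crossing $E'_a\cap E_k$ never enters the argument that all the $E'_{j_m}$ meet $C$. This is what the paper's formulation buys you. If you want to keep your framework, the cleanest fix is not the local argument you sketch, but rather to observe that the generic choice of $p\in V\subset Y_0^\epsilon$ can be made outside the finitely many $E^0_k$ not containing $Y_0$; and to handle the remaining case separately (or to weaken (ii) to $C_\Delta\setminus|\mathcal S_{\mathcal M}^{\mathcal F}|\subset C$, which already suffices for the subsequent use, since the point $s'$ produced later is a trace point and hence lies outside $|\mathcal S_{\mathcal M}^{\mathcal F}|$).
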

\begin{proof} It is enough to show that any irreducible component $E'_j\cap \Delta$ of $E'\cap \Delta$ that meets $C_\Delta$ also meets $C$. We know that there is a finite sequence
	$$
	E_\infty= \Delta\cap E'_{j_0} , \Delta\cap E'_{j_1},  \Delta\cap E'_{j_2}, \ldots, \Delta\cap E'_{j_r}=\Delta\cap E'_{j}
	$$
such that $\Delta\cap E'_{j_{m-1}}\cap E'_{j_m}$ is not a nodal point for ${\mathcal F}\vert_\Delta$, for $m=1,2,\ldots,r$. These points do not belong to the separator set of $({\mathcal M},{\mathcal F})$ and hence all the components $E'_{j_m}$ meet $C$, for $m=0,1,2,\ldots,r$. In particular $E'_j$ meets $C$.
\end{proof}
By the two-dimensional result in \cite[Corollary 4.1]{Ort-R-V}, there is an invariant analytic branch $(\Gamma'_0,p)\subset (\Delta_0,p)$ such that the strict transform $(\Gamma',s')\subset \Delta$ meets $C_\Delta$. Moreover, we have that $(\Gamma', s')$ is not contained in the
strict transform of $E^0$, since $s'\in C_\Delta\subset C$ and $E_j\cap C=\emptyset$ for any $j\in J_0$. A ``fortiori'' we have that $(\Gamma',s')$ is not contained in $E$. This implies that $s'$ is a trace singular point in $C$. If is contained in an irreducible component $X$ of type trace of $\operatorname{Sing}({\mathcal F},E)$ that is entirely contained in $C$. Now, we ca take a point $s\in X\cap \pi^{-1}(0)$. The point $s$ being of trace type, we can find a germ of invariant curve $(\Gamma,s)$ not contained in $E$. Now we are done by taking $(\Gamma_0,0)$ to be the image of $(\Gamma,s)$ under $\pi$.
\section{Appendix: Simply Connectedness}
\label{Appendix: Simply Connectedness}
Let us give in this Appendix an outline of a proof of Proposition \ref{prop:simplyconnected}. We consider a combinatorial strata structure  ${\mathcal H}\subset {\mathcal P}(I)$ and we have to prove
\begin{enumerate}
\item[a)] The topological space $\Omega_{\mathcal H}$ is connected if and only if ${\mathcal H}$ is $1$-connected.
 \item[b)]  The combinatorial strata structure $\mathcal H$ is simply connected if and only if $\Omega_{{\mathcal H}_3}$ is a simply connected topological space.
\end{enumerate}
Let us give a proof of a). The simplicial complex $\Omega_{\mathcal H}$ is connected if and only if any two given vertices $\xi_i$ and $\xi_j$ may be connected by a topological path; indeed any point is $\Omega_{\mathcal H}$ is connected with a vertex by a topological path. This already shows that if $\mathcal H$ is $1$-connected, then $\Omega_{\mathcal H}$ is a connected topological space. Conversely, assume that $\mathcal H$ is not $1$-connected, and consider the decomposition in connected components ${\mathcal H}=\cup_{\lambda\in \Lambda} {\mathcal H}_\lambda$ as in Remark \ref{rk:connected components}. We have that
$$
\Omega_{\mathcal H}=\cup_{\lambda\in \Lambda}\Omega_{{\mathcal H}_\lambda};\quad
\Omega_{{\mathcal H}_\lambda}\cap \Omega_{{\mathcal H}_{\lambda'}}=\emptyset, \text{ if }\lambda\ne\lambda'.
$$
Hence $\Omega_{\mathcal H}$ is not connected, since it is a disjoint union of finitely many (at least two) simplicial complexes.

Let us give a proof of b). In view of Remark  \label{rk:tresdimensionalcase}, we assume that ${\mathcal H}={\mathcal H}_3$ and in view of part a) we also assume that $\mathcal H$ and hence $\Omega_{\mathcal H}$ are connected. Let us consider the combinatorial fundamental group $\pi_1({\mathcal H},\{i_0\})$, whose elements are the homotopy classes of loops, that is constructed ``mutatis mutandis'' as the clasical Poincaré group. Now it is enough to prove that
\begin{equation}
\label{eq:piuno}
\pi_1({\mathcal H},\{i_0\})=\pi_1(\Omega_{\mathcal H}, \xi_{i_0}).
\end{equation}
We prove the equality in Equation \ref{eq:piuno} by induction on the lexicographical counter $(\sharp I,\sharp {\mathcal H})$. The starting case $(1,1)$ corresponds to a single point and in this case both groups are trivial. In order to do the induction step, let us separate the cases ${\mathcal H}(3)=\emptyset$ and ${\mathcal H}(3)\ne\emptyset$.

Assume that ${\mathcal H}(3)=\emptyset$ and $\sharp I\geq 2$. Note that in this case $\Omega_{\mathcal H}$ is a connected union of linear segments.  Given $i\in I$, denote $\operatorname{Star}(\mathcal H,i)$ the set
$$
\operatorname{Star}(\mathcal H,i)=\{j\in I\setminus\{i\}; \{i,j\}\in \mathcal H\}.
$$
In there is $i\in I$ such that $\sharp \operatorname{Star}(\mathcal H,i)=1$, we can consider
${\mathcal H}'={\mathcal H}\cap {\mathcal P}(I\setminus\{i\})$. Taking $i_0\ne i$, we see directly that $\pi_1({\mathcal H}',\{i_0\})=\pi_1({\mathcal H},\{i_0\})$ and $\pi_1(\Omega_{{\mathcal H}'},\xi_{i_0})=\pi_1(\Omega_{{\mathcal H}},\xi_{i_0})$; we are done by induction. Assume that $\sharp \operatorname{Star}(\mathcal H,i)\geq 2$ for any $i\in I$. Let us choose $i_1,i_0\in I$, with $i_1\ne i_0$. Consider ${\mathcal H}'$ defined as follows:
$$
{\mathcal H}'=({\mathcal H}\cap {\mathcal P}(I\setminus\{i_1\}))\cup {\mathcal P}(\operatorname{Star}(\mathcal H,i_1))_2.
$$
That is, we eliminate $i_1$ and we add all the two by two connections between the elements of $\operatorname{Star}(\mathcal H,i_1)$. We see in a direct way that $\pi_1({\mathcal H}',\{i_0\})=\pi_1({\mathcal H},\xi_{i_0})$ and, by means of a deformation retract, that $\pi_1(\Omega_{{\mathcal H}'},\{i_0\})=\pi_1(\Omega_{\mathcal H},\xi_{i_0})$; as before, we are done by induction.

Assume that ${\mathcal H}(3)\ne\emptyset$. There is a stratum $J=\{i_1,i_2,i_3\}\in {\mathcal H}(3)$. In this case, we have that ${\mathcal H}= {\mathcal H}'\cup {\mathcal P}(J)$, where ${\mathcal H}'={\mathcal H}\setminus\{J\}$. Hence we have that
$$
\Omega_{\mathcal H}=\Omega_{{\mathcal H}'}\cup \Delta_J,
$$
where $\Omega_{{\mathcal H}'}=\Delta_{J}$ is connected and, more precisely, it is the frontier $\partial\Delta_J$ of $\Delta_J$.
By Classical Seifert Van Kampen theorem, we know that $\pi_1(\Omega_{\mathcal H},\xi_{\{i_1\}})$ is isomorphic to $\pi_1(\Omega_{\mathcal H}',\xi_{\{i_1\}})$ quotient by the normal subgroup generated by a single loop $\sigma$ supported by $\partial\Delta_J$. We use a specific combinatorial version of Seifert-Van Kampen theorem applied to the decomposition ${\mathcal H}= {\mathcal H}'\cup {\mathcal P}(J)$, where ${\mathcal H}'={\mathcal H}\setminus\{J\}$, to show that
$\pi_1({\mathcal H},\{i_1\})$ is the quotient of $\pi_1({\mathcal H}',\{i_1\})$ by the normal subgroup generated by the single loop $\sigma=(\{i_1\},\{i_2\},\{i_3\},\{i_1\})$, In this way, we end by induction.

Let us give an idea about the proof of the combinatorial Seifert-Van Kampen result we need.  In view of the classical proof of Van Kampen theorem, see \cite{Mas}, we have to proof that any group morphism $\phi: \pi_1({\mathcal H}',\{i_1\})\rightarrow H$ such that $\phi([\sigma]')=e_H$ extends in a unique way to a group morphism $F: \pi_1({\mathcal H},\{i_1\})\rightarrow H$. The uniqueness is clear and necessarily defined by
$$
F([\gamma])=\phi([\gamma]').
$$
Hence it is enough to show that if $[\gamma]=[c_{\{i_1\}}]$ then $\phi([\gamma]')=e_H$. Take a finite sequence of elementary homotopies in ${\mathcal H}$
$$
c_{\{i_1\}}=\gamma_0\sim \gamma_1\sim \gamma_2\sim\cdots\sim\gamma_s=\gamma.
$$
If none of them correspond to homotopic pairs $(\epsilon,\epsilon')$ obtained from the set $J={i_1,i_2,i_3}$, the homotopies are also homotopies in ${\mathcal H}'$ and we are done. Assume that
$$
\gamma_j=\alpha_j*\epsilon*\beta_j,\quad  \gamma_{j+1}=\alpha_j*\epsilon'*\beta_j.
$$
Then $\gamma_{j+1}*\gamma_j^{-1}= \alpha_j*(\epsilon'*\epsilon)*\alpha^{-1}$ and then $\phi([\gamma_{j+1}*\gamma_j^{-1}])=e_H$. Hence $\phi([\gamma_{j+1}]')=\phi([\gamma_j]')$ and we are done.

\section{Appendix: Strong Desingularization in Dimension Three}
\label{Appendix: Strong Desingularization in Dimension Three}
Let us give here an outline for a proof of the statement in Remark \ref{rk:reductionoflists} in the case of a three-dimensional ambient space. It is based on the classical methods by Hironaka, Abhyankar and others for reduction of singularities in small dimensions, presented in \cite{Cos-G-O} (See also \cite{Aro-H-V} and \cite{Hir2}).

We start with a pair $({\mathcal M},{\mathcal L})$, where ${\mathcal M}=(M,E;K)$ is a three-dimensional ambient space and ${\mathcal L}$ is a finite list of irreducible hypersurfaces not contained in $E$.
Denote by $H$ the union of the hypersurfaces in ${\mathcal L}$ and take a point $p$ in $K$.
We consider the following local invariants:
\begin{enumerate}
\item The multiplicity $\nu_p(H)$ of $H$ at the point $p$.
\item The dimension $d_p(H)$ of Hironaka's strict tangent space $T_pH$.
\item The number $e_p(E)$ of irreducible components of $E$ through $p$.
\item The ``encombrement'' $t_p(H, E)$ of $T_pH$ with respect to $E$.
\end{enumerate}
The reader is supposed to be familiar with the multiplicity $\nu_p(H)$. Let us note that $\nu_p(H)>0$ if and only if $p\in H$.  The strict tangent space $T_pH$ of $H$ in a point $p\in H$ is the $\mathbb C$-vector subspace of $T_pM$ whose elements are the vectors leaving the tangent cone $C_pH$ invariant by translation. In other words, if $\xi_1,\xi_2,\ldots,\xi_s$ is a basis for the ortogonal $T_pH^\vee\subset T^*_pM$, then $C_pH$ has an equation of the form
$$
\phi(\xi_1,\xi_2,\ldots,\xi_s)=0,
$$
where $\phi$ is an homogeneous polynomial of degree $\nu_p(H)$.

Let us give the definition of $t_p(T_pH, E)$. Choose coordinates $x=(x_1,x_2,\ldots,x_n)$ such that
$E=(x_1x_2\cdots x_e=0)$,  locally at  $p$. (Recall that $n=3$).
Note that each $x_i$ gives a cotangent vector $\bar x_i\in T^*_pM$. Given a subset   $J\subset \{1,2,\ldots,e\}$, we define $t_J$ to be the dimension of the $\mathbb C$-vector subspace $T_J(H,E;p)$ of $T^*_pM$ given by
$$
T_J(H,E;p)=T^\vee_pH\cap <\bar x_i;i\in J> .
$$
For each maximal sequence
$
\sigma: \{1,2,\ldots,e\}=J_1\supsetneq J_2\supsetneq\cdots\supsetneq J_e\supsetneq
\emptyset,
$
we put $t_\sigma=(t_{J_1}, t_{J_2},\ldots,t_{J_e})\in {\mathbb Z}_{\geq 0}^e$. Put $\theta_p(H,E)=t_{J_1}$. Note that
$$
n-d_p(H)\geq
{\theta_p(H,E)}\geq t_{J_2}\geq \cdots\geq t_{J_e}\geq 0.
$$
In particular ${\theta_p(H,E)}=0$ if and only if $t_p(\sigma)=(0,0,\ldots,0)$ for any $\sigma$ (equivalently, for one $\sigma$). Finally, we define $t_p(H,E)$ to be the maximum for the lexicographical ordering of the sequences $t_\sigma$, when $\sigma$ varies.

Let us introduce the number $\zeta_p(H,E)$, obtained from $d_p(H)$, $t_p(H,E)$ and $e_p(E)$. If  If $d_p(H)\in \{0,1\}$, we put $\zeta_p(H,E)=0$. If $d_p(H)=2$, then  $\zeta_p(H,E)$ takes one of the four possible values $0,1,2,3$ as follows:
\begin{enumerate}
\item[-] We say that $\zeta_p(H,E)=0$ if and only if $\theta_p(H,E)=0$.
\item[-] We say that $\zeta_p(H,E)=1$ if and only if $e_p(E)=3$ and $t_p(H,E)=(1,0,0)$.
\item[-] We say that $\zeta_p(H,E)=2$ if and only if $e_p(E)\geq 2$ and $t_p(H,E)=(1,0)$ or $t_p(H,E)=(1,1,0)$.
\item[-] We say that $\zeta_p(H,E)=3$ if and only if $e_p(E)\geq 2$ and $t_p(H,E)=(1,1)$ or $t_p(H,E)=(1,1,1)$.
\end{enumerate}
The main invariant of control $I_p(H,E)$ is the lexicographical invariant
$$
I_p(H,E)=(\nu_p(H),d_p(H), \zeta_p(H,E)).
$$
By convention, we put $I_p(H,E)=(0,0,0)$ when $\nu_p(H)=0$.
The following results of stability are implicitly contained in the above cited works (the reader can prove them by taking a Weierstrass-Tchirnhausen preparation of a local equation of $H$):
\begin{lemma}[Horizontal stability]
\label{lema:estabilidad horizontal}
 The invariant $I_p(H,E)$ is analytically upper semicontinuous.
\end{lemma}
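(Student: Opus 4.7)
\begin{prf} (Plan.)
The invariant $I_p(H,E)=(\nu_p(H),d_p(H),\zeta_p(H,E))$ is a lexicographic triple, so it suffices to prove upper semicontinuity of each component conditionally on the previous ones. My plan is to reduce $I_p$ to purely linear-algebraic data attached to a Weierstrass-Tchirnhausen normal form of a local equation of $H$, and then invoke two classical facts: the dimension of the intersection of a varying family of linear subspaces of a vector bundle is upper semicontinuous, and the number $e_p(E)$ of components of a normal crossings divisor through a point is upper semicontinuous.

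First I would recall the classical semicontinuity of the multiplicity: if $f=0$ is a reduced local equation of $H$ near $p_0$, then $\nu_p(H)\ge m$ is cut out by the vanishing of all partials of $f$ of order $<m$, so $\{p:\nu_p(H)\ge m\}$ is analytic. Next, on a fixed stratum $\{\nu=m\}$, pick coordinates adapted to a basis of $T_{p_0}H^\vee$ so that a Weierstrass-Tchirnhausen preparation writes
\[
f(x_1,\ldots,x_n)=x_1^{m}+\sum_{k=1}^{m}a_k(x_2,\ldots,x_n)\,x_1^{m-k},\qquad a_k(0)=0,
\]
and, after Tschirnhausen, one can further eliminate terms in a specific variable; the usual Hironaka argument shows that for $p$ near $p_0$ with $\nu_p(H)=m$, the strict tangent space $T_pH$ varies as an analytic subbundle of $TM$ restricted to the stratum, and that $\dim T_pH\le d_{p_0}(H)$ with equality exactly on an analytic subset. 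This delivers upper semicontinuity of $(\nu_p,d_p)$.

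Having $(\nu,d)$ constant and equal to $(m,d)$ on an analytic neighborhood, I would then analyze $\zeta_p(H,E)$. The point $p$ determines the set $J^p=\{i:\,p\in E_i\}$ of size $e_p(E)$, and $\zeta_p$ is defined in terms of the dimensions
\[
t_J(p)=\dim\bigl(T_pH^\vee\cap\langle\bar x_i:i\in J\rangle\bigr),\qquad J\subset J^p.
\]
Both factors are upper semicontinuous: $e_p(E)$ jumps up on the deeper strata of $E$, and the dimension of the intersection of the two analytic families of subspaces $p\mapsto T_pH^\vee$ and $p\mapsto\langle\bar x_i:i\in J\rangle$ can only increase at special points. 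This gives upper semicontinuity of $\theta_p(H,E)=t_{J^p}(p)$, and more generally of each truncation of the tuple $t_\sigma$. I would then read off, case by case, the definition of $\zeta_p\in\{0,1,2,3\}$: the condition $\zeta_p\ge \zeta^*$ for each $\zeta^*$ translates to a conjunction of inequalities of the form ``$e_p(E)\ge $ something'' and ``some $t_J(p)\ge $ something,'' which defines an analytic closed subset of the locus where $(\nu,d)$ is constant.

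The main obstacle I anticipate is bookkeeping at points where $e_p(E)$ itself jumps, since then the ambient set $\{\bar x_i:i\in J^p\}$ changes and one must check that the case-by-case thresholds in the definition of $\zeta_p$ interact correctly with this jump; concretely, one needs to verify that if $p_n\to p_0$ with $e_{p_n}<e_{p_0}$, then the extra coordinate forms appearing at $p_0$ only increase the relevant intersection dimensions, so the cases $\zeta=1,2,3$ are stable under specialization. Once this compatibility is checked coordinate by coordinate using the Weierstrass-Tchirnhausen normal form (where the variables corresponding to $T_{p_0}H^\vee$ are privileged), the closedness of $\{p:I_p(H,E)\ge v\}$ follows from the closedness of each of the three defining analytic conditions.
\end{prf}
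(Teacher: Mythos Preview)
The paper does not actually prove this lemma: it states that both stability lemmas ``are implicitly contained in the above cited works (the reader can prove them by taking a Weierstrass--Tchirnhausen preparation of a local equation of $H$)'' and leaves it at that. Your plan follows precisely this hint---reducing the semicontinuity of $\nu_p$, then of $d_p$ on the equimultiple locus, then of $\zeta_p$ on the locus where $(\nu,d)$ is fixed, all to linear-algebraic data extracted from a Weierstrass--Tchirnhausen normal form---so you are doing exactly what the authors suggest, only in more detail than they provide.

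One small comment on your write-up: when you say ``$\dim T_pH\le d_{p_0}(H)$ with equality exactly on an analytic subset,'' this is the right inequality for upper semicontinuity (the strict tangent space can only get larger at the special point along an equimultiple stratum), but it is worth being explicit that this is the direction in which Hironaka's invariant goes, since in some presentations one works with the codimension $n-d_p$ instead. Your identification of the main bookkeeping issue---checking that the jump in $e_p(E)$ interacts monotonically with the case split defining $\zeta_p$---is accurate and is indeed where the (routine but tedious) work lies; the paper simply absorbs this into the reference to \cite{Cos-G-O,Hir2}.
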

\begin{lemma}[Vertical stability]
 \label{lema:estabilidad vertical}
 Let $\pi: ({\mathcal M}',{\mathcal L}')\rightarrow ({\mathcal M},{\mathcal L})$
be an admissible blowing up with an equimultiple center $Y$ and consider a point $p\in Y$. Let us recall that $\pi^{-1}(p)=\mbox{\rm $\mathbb P$roj }(T_pM/T_pY)$. Then $T_pY\subset T_pH$. Moreover, for any  $p'\in \pi^{-1}(p)$, we have that
\begin{enumerate}
\item $
I_{p'}(H',E')\leq I_p(H,E)
$
for the lexicographical ordering .
\item If $\nu_{p'}(H')=\nu_p(H)$, then $p'\in\mbox{\rm $\mathbb P$roj }(T_pH/T_pY)$.
\end{enumerate}
\end{lemma}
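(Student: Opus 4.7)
The plan is to work locally at $p$ via a Weierstrass--Tschirnhausen preparation of a reduced local equation $f$ of $H$ and to track how the initial form $\operatorname{in}_p(f)$ and its interaction with the divisor $E$ transform in any chart of the admissible blow-up $\pi$ with equimultiple center $Y$. Throughout I write $\nu=\nu_p(H)$.

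First I would establish the inclusion $T_pY\subset T_pH$ as a consequence of the equimultiplicity hypothesis, by the classical Hironaka normal flatness argument: if $\nu_q(f)=\nu$ for every $q\in Y$ near $p$, then for any $v\in T_pY$ a curve inside $Y$ tangent to $v$ witnesses translation-invariance of the tangent cone $C_pH$ under $v$, which is precisely the defining condition $v\in T_pH$. This inclusion in turn ensures that $\mathbb P(T_pH/T_pY)$ is a well-defined projective subspace of $\pi^{-1}(p)$.

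Next I would address parts (1) and (2) at the level of $\nu$ and $d$ simultaneously. Choose local coordinates $(x_1,\ldots,x_n)$ at $p$ with $E=(x_1\cdots x_e=0)$ and with $Y$ cut out by a subset of these coordinates (possible since $Y$ has normal crossings with $E$ and $Y\subset\operatorname{Sing}(E\cup H)$). For any $p'\in\pi^{-1}(p)$ pass to a standard affine chart of $\pi$; a direct computation yields $f\circ\pi=x_\alpha^{\nu}\,g$ where $x_\alpha$ is the new exceptional variable and $g$ is a local equation of $H'$ at $p'$, whence $\nu_{p'}(H')\le\nu$. If equality holds then the initial form of $g$ at $p'$ is a nonzero ``dehomogenization'' of $\operatorname{in}_p(f)$ along the direction represented by $p'\in\mathbb P(T_pM/T_pY)$; this is possible only when that direction lies in $\mathbb P(T_pH/T_pY)$, which proves part~(2), and a direct comparison of initial forms gives $d_{p'}(H')\le d_p(H)$.

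The remaining and technically most delicate step, which I regard as the main obstacle, is the inequality $\zeta_{p'}(H',E')\le\zeta_p(H,E)$ under the additional hypothesis $d_{p'}=d_p=2$. I would carry out a finite case analysis on the possible values of $t_p(H,E)$ and $e_p(E)$ and on the possible admissible centers $Y$ (a point, a curve contained in one or two components of $E$, or a curve meeting $E$ only transversally at $p$), following the approach of \cite{Cos-G-O}. Using that $E'$ at $p'$ is formed by certain strict transforms of the components of $E$ through $p$ together with the new exceptional component $\pi^{-1}(Y)$, and that the inclusion $T_pY\subset T_pH$ places the exceptional direction inside $T_{p'}H'$, I would write each cotangent subspace $T_{J'}(H',E';p')$ explicitly in terms of the corresponding $T_J(H,E;p)$. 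The hard part is verifying configuration by configuration that the packaging of $(t_p,e_p)$ into the four-valued invariant $\zeta_p\in\{0,1,2,3\}$ forces $\zeta_{p'}\le\zeta_p$; the very shape of the definition of $\zeta$ is tailored precisely so that every admissible configuration respects this inequality, the tight cases being those in which further admissible blow-ups are required to eventually lower $(\nu,d)$.
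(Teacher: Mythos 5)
Your proposal is correct and follows essentially the same route the paper intends: the paper gives no proof of this lemma at all, stating only that it is ``implicitly contained'' in the cited works of Hironaka et al.\ and provable ``by taking a Weierstrass--Tchirnhausen preparation of a local equation of $H$,'' which is exactly the strategy you lay out (equimultiplicity giving $T_pY\subset T_pH$, the chart computation $f\circ\pi=x_\alpha^{\nu}g$ for the behaviour of $\nu$ and the directrix, and a case analysis for $\zeta$). The one caveat is that the case-by-case verification of $\zeta_{p'}\le\zeta_p$ --- the only part that is not literally classical, since $\zeta$ is a bespoke repackaging of $(d,t,e)$ --- is left as a described but unexecuted check in your write-up, just as it is in the paper.
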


When $({\mathcal M},{\mathcal L})$ is locally simple, we can make it simple just by blowing-up the points where the strata are not connected. Thus our objective is to get a locally simple pair after a suitable admissible transformation.

Let us denote by  $\operatorname{Imax}(H,E)$ the maximum of the invariants $I_p(H,E)$ for $p\in K$. We have that $({\mathcal M},{\mathcal L})$ is locally simple if and only if
$
\operatorname{Imax}(H,E)\leq (1,2,0)
$.
By an elementary induction, our objective is reached if we show how to get  a new $({\mathcal M}',{\mathcal L}')$ such that
$
\operatorname{Imax}(H',E')<
\operatorname{Imax}(H,E)
$, when we start with $
\operatorname{Imax}(H,E)> (1,2,0)
$.
Assume thus that $
\operatorname{Imax}(H,E)=(r, d, \zeta)>(1,2,0)
$ and consider the analytic subset
$$
\operatorname{Sam}_{r,d,\zeta}(H,E)=\{p\in M;\; I_p(H,E)=(r,d,\zeta)\}.
$$
We have to obtain that $\operatorname{Sam}_{r,d,\zeta}(H,E)=\emptyset$ by means of an admissible transformation with equimultiple centers.

$\bullet$ Let us describe first  how to proceed in the cases with $d\leq 1$. Note that in this cases we have $r\geq 2$ and thus the set $Eq_r(H)$ of $r$-multiplicity defined by
$$
Eq_r(H)=\{p\in M; \nu_p(H)\geq r\}=  \{p\in M; \nu_p(H)=r\}
$$
is a finite union of points and curves, that contains $\operatorname{Sam}_{r,d,\zeta}(H,E)$.

Assume we are in the case $d=0$. In this case $(r,d,\zeta)=(r,0,0)$. The sets and  $Eq_r(H,E)$ and $\operatorname{Sam}_{r,0,0}(H,E)$ coincide and they consist in a finite union of points. We blow-up all these points (at the same time or one after the other) and we are done by Lemma \ref{lema:estabilidad vertical}.

Assume we are in the case $d=1$. In this case $(r,d,\zeta)=(r,1,0)$. Any curve $\Gamma$ in $Eq_r(H)$ is also contained in $\operatorname{Sam}_{r,1,0}(H,E)$. In fact, a non isolated point in $Eq_r(H)$ cannot have $0$-dimensional strict tangent space. We can proceed by induction on the number $\alpha$ of irreducible curves contained in  $\operatorname{Sam}_{r,1,0}(H,E)$ which is the same one as the number of $r$-equimultiple curves.

If $\alpha=0$, then $\operatorname{Sam}_{r,1,0}(H,E)$ is a finite set of points. We blow-up one of such points. By Lemma \ref{lema:estabilidad vertical}, at most a new point of multiplicity $r$ may appear, if it does not appear, the number of points in $\operatorname{Sam}_{r,1,0}(H,E)$ decreases by a unit; if it appears in a persistent way, we detect an equimultiple curve,what is impossible.

Assume that $\alpha>0$. By blowing-up points, we obtain that $\operatorname{Sam}_{r,1,0}(H,E)$ has strong normal crossings with $E$, since the curves in $\operatorname{Sam}_{r,1,0}(H',E')$ are the strict transforms of the curves in $\operatorname{Sam}_{r,1,0}(H,E)$. We can assume this property an choose an $r$-equimultiple curve $\Gamma$ as center.   By Lemma \ref{lema:estabilidad vertical}, no $r$-multiple point appears over a point in $\Gamma$. Then $\alpha'=\alpha-1$ and we are done.\\

$\bullet$ Let us consider now the cases with $d=2$. There are four possible situations following the value of $\zeta\in \{0,1,2,3\}$.

$-$ {\em Case $\zeta=0$}: Note that in this case we have $r\geq 2$.
We start by getting strong normal crossings between $\operatorname{Eq}_r(H)$ and $E$ by means of a finite number of blow-ups centered at points in $\operatorname{Eq}_r(H)$. This property is stable under new punctual blowing-ups.

In this situation, if we blow-up a curve $\Gamma\subset\operatorname{Eq}_r(H)$, we do not destroy the property that  $\operatorname{Eq}_r(H)$ and $E$ have strong normal crossings. To see this, we can use the existence of local maximal contact provided by a Weierstrass Tchirnhausen presentation of an equation of $H$. More precisely, given a point $p\in \operatorname{Eq}_r(H)$, there are local coordinates
$(x_1,x_2,z)$  such that $E\subset (x_1x_2=0)$,  an equation of $H$ has the form
$$
z^r+f_2(x_1,x_2)z^{r-2}+f_3(x_1,x_2)z^{r-3}+\cdots +f_r(x_1,x_2)=0
$$
and $\operatorname{Eq}_r(H)\subset (z=x_1=0)\cup (z=x_2=0)$. If we blow-up $z=x_1=0$ and it is contained in $\operatorname{Eq}_r(H)\subset (z=x_1=0)\cup (z=x_2=0)$, then the new $\operatorname{Eq}_r(H')$ is contained in the intersection with the exceptional divisor of the strict transform of the maximal contact surface $z=0$.

The global strategy is as follows: if there is a curve $\Gamma$ contained in $\operatorname{Eq}_r(H)$ (intersecting $\operatorname{Sam}_{r,2,0}(H,E)$, but this is not essential);  then blow-up one of such $\Gamma$. Otherwise, blow-up a point in $\operatorname{Sam}_{r,2,0}(H,E)$. To show that this procedure ends in a finite number of steps, the reader may follow the ideas in Hironaka's Bowdoin College Seminar \cite{Cos-G-O}. Roughly speaking, we reduce the global control to a local one along ``bamboes'' and after this,  we use the properties of the evolution of the characteristic polygon to show the finiteness.

$-$ {\em Case $\zeta=1$}: Note that there are only finitely many points $p$ in $\operatorname{Sam}_{r,2,1}(H,E)$, since $e_p(E)=3$ for each of such points $p$. Consider local coordinates $(x_1,x_2,x_3)$ at $p$ such that $E=(x_1x_2x_3=0)$. The initial part of a local equation of $H$ has the form
 $$
 ( x_1+\alpha x_2+\beta x_3)^r;\quad \alpha\beta\ne 0.
 $$
Then, after the blowing-up of $p$, each point $p'$ in the exceptional divisor with $r=\nu_{p'}(H')$ and $2=d_{p'}(H)$ has $\zeta_{p'}(H',E')=0$. Thus, we end by blowing-up one by one the points in  $\operatorname{Sam}_{r,2,1}(H,E)$.

$-${\em Case $\zeta=2$}:
Recall that any point $p\in\operatorname{Sam}_{r,2,2}(H,E)$ has $e_p(E)\geq 2$. Thus $\operatorname{Sam}_{r,2,2}(H,E)$ is contained in the union of the curves $E_{ij}=E_i\cap E_j$, with $i\ne j$. We follow the following strategy: if there is an $E_{ij}\subset \operatorname{Eq}_r(H)$ with $E_{ij}\cap \operatorname{Sam}_{r,2,2}(H,E)\ne\emptyset$, then blow-up one of such $E_{ij}$. Otherwise, blow-up a point $p\in \operatorname{Sam}_{r,2,2}(H,E)$. Note that the centers have strong normal crossings with $E$, since they are points or curves of the type $E_{ij}$.

Let us see what happens when we blow-up a curve $E_{ij}\subset \operatorname{Eq}_r(H)$ containing a point $p\in \operatorname{Sam}_{r,2,2}(H,E)$. Take local coordinates $(x_1,x_2,y)$ such that
$$
(x_1x_2=0)\subset E\subset (x_1x_2y=0), \quad E_{ij}=(x_1=x_2=0).
$$
A local equation $h$ of $H$ has the form
$$
h=(\alpha x_1+\beta x_2)^r+\tilde h, \quad \alpha\beta\ne0,
$$
where $\tilde h$ has generic order $\geq r$ along $x_1=x_2$. After the blow-up of $E_{ij}$, there are no points $p'$ over $p$ with $I_{p'}(H',E')=(r,2,2)$. Then all that curves disappear after finitely many steps.

We are then in a situation without $r$-equimultiple curves $E_{ij}$ that intersect $\operatorname{Sam}_{r,2,2}(H,E)$. We blow then a point $p\in \operatorname{Sam}_{r,2,2}(H,E)$. In local coordinates, the initial part of an equation of $H$ has the form $(\alpha x_1+\beta x_2)^r$, with $\alpha\beta\ne0$ and $(x_1x_2=0)\subset E$. The only possible point $p'\in \operatorname{Sam}_{r,2,2}(H',E')$ over $p$ corresponds to the interesction with the exceptional divisor of the strict transform of $x_1=x_2=0$ and moreover, no new $r$-equimultiple curves of the type $E'_{i'j'}$ will appear. We found that this points disappear after finitely many steps, since otherwise we find that $x_1=x_2=0$ should be $r$-equimultiple.

$-$ {\em Case $\zeta=3$}:
Each point $p\in \operatorname{Sam}_{r,2,3}(H,E)$ selects an irreducible component $E(p)$ of the divisor $E$, given by the following property: there are local coordinates $(x,y,z)$ such that the initial part of an equation of $H$ has the form $x^r$ and $E(p)=(x=0)$. These $E(p)$ will act as maximal contact surfaces. More precisely, consider an admissible blow-up
$$
\pi:({\mathcal M}',{\mathcal L}')\rightarrow ({\mathcal M},{\mathcal L})
$$
centered in $Y$, with $p\in Y$.  Then $Y\subset E(p)$ and any $p'\in \pi^{-1}(p)\cap \operatorname{Sam}_{r,2,3}(H',E')$ satisfies that $E'(p')$ is the strict transform of $E(p)$. In particular, the number of possible $E(p)$ is not increased. Thus, by finite induction it is enough to eliminate one of them. We select an irreducible component $D$ of $E$ and we consider the set
$$
\operatorname{Sam}^D_{r,2,3}(H,E)=\{p\in \operatorname{Sam}_{r,2,3}(H,E); E(p)=D \}.
$$
We want to make disappear this set after finitely many blow-ups. The first step is to make that $E$ and $D\cap \operatorname{Eq}_r(H)$ do have strong normal crossings by blowing-up points. This property is obtained by a classical two dimensional argument, taking $D$ as a new ambient space. The property is stable under blow-up centered in points or in $r$-equimultiple curves contained in $D$. We take now the strategy of blowing-up one first the curves $\Gamma\subset D\cap \operatorname{Eq}_r(H)$ that intersect $\operatorname{Sam}^D_{r,2,3}(H,E)$ and when there is no one, we chose as center a point in $\operatorname{Sam}^D_{r,2,3}(H,E)$. The classical control by the characteristic polygon assures that a procedure following this strategy stops in finitely many steps.

\end{document}